\newcommand{\C}{\mathbb{C}}
\renewcommand{\D}{\mathbb{D}}
\newcommand{\N}{\mathbb{N}}
\renewcommand{\P}{\mathbb{P}}
\newcommand{\Q}{\mathbb{Q}}
\newcommand{\R}{\mathbb{R}}
\newcommand{\Z}{\mathbb{Z}}
\begin{document}

%\large

\title*{Rationality in Differential Algebraic Geometry}

\author{Jo\"el Merker}

\institute{Jo\"el Merker 
\at 
D\'epartement de Math\'ematiques d'Orsay, 
B\^atiment 425,
Facult\'e des Sciences,
F-91405 Orsay Cedex, 
France,
\email{Joel.Merker@math.u-psud.fr}}

\maketitle

%\abstract*{Online abstract (Idem).}

\vspace{-1cm}

\abstract{
Parametric Cartan theory of exterior differential systems, 
and explicit cohomology of projective manifolds reveal 
united rationality features of differential algebraic geometry.}

\section{Rationality}

The natural integer numbers:
\[
1,\,\,\,
2,\,\,\,
3,\,\,\,
4,\,\,\,
5,\,\,\,
6,\,\,\,
7,\,\,\,
8,\,\,\,
9,\,\,\,
10,\,\,\,
11,\,\,\,
12,\,\,\,
13,\,\,\,
\dots,\,\,\,
2013,\,\,\,
\dots\dots
\]
necessarily hint at some `{\sl invention}' of the Zero. While for the
Greeks, the actual `$\infty$' and the actual `$0$' did not `{\em
exist}', the Babylonians used the symbol `0' in numeration. In India
({\em cf.} {\em e.g.} Brahmagupta), the zero comes from
self-subtraction:
\[
0
\overset{\text{\rm def}}{\,:=\,\,}
{\bf a}
\,{\bf -}\,
{\bf a}.
\]
In rational numbers:
\[
\frac{\,\,p\,\,}{\,\,q\,\,}
\ \ \ \ \ \ \ \ \ \ \ \ \
{\scriptstyle{(q\,\neq\,0)}},
\] 
division by zero is and must be excluded.

The present paper aims at showing that {\em higher abstract
conceptions in advanced mathematics depend upon archetypical
rational computational phenomena}.

Several instances of deeper rationality facts
will hence be surveyed:

\medskip\noindent$\square$\,
In Cartan's theory of exterior differential systems;

\medskip\noindent$\square$\,
In Complex Algebraic Geometry.

\section{Equivalences of 5-dimensional CR manifolds}

\begin{quotation}
Despite their importance, until now, the invariants of strictly
pseudoconvex domains have been fully computed, to our knowledge, only
in the case of the unit ball $\mathbb{B}^{n+1}$, where they all vanish!
\hfill Sidney {\sc Webster} (\cite{Webster-2000}).
\end{quotation}

\noindent
Real analytic ($\mathcal{ C}^\omega$) {\sl CR-generic} submanifolds $M
\subset \C^{ n + c}$ of codimension $c \geqslant 0$ are those
satisfying $TM + J(TM) = T\C^{n+c} \big\vert_M$, where $J \colon
T\C^{n + c} \longrightarrow T\C^{n+c}$ denotes the standard complex
structure and then $TM \cap J(TM)$ has constant real dimension $2n =:
2\, {\sf CRdim}\, M$, while ${\sf dim}_\R M = 2n+c$; general $\mathcal{
C}^\omega$ CR submanifolds $M \subset \C^\nu$, {\em i.e.} those for
which ${\sf dim}\, \big( T_pM \cap J(T_pM)\big)$ is constant for $p \in M$,
are always locally CR-generic in some complex submanifold \cite{
Merker-Porten-2006}, hence CR-genericity is not a restriction.

\medskip\noindent{\bf Problem 1.} 
{\sl Classify local $\mathcal{ C}^\omega$
CR-generic submanifolds $M^{ 2n+c} 
\subset \C^{ n+c}$ under local biholomorphisms of $\C^{n+c}$ up
to dimension $2n+c \leqslant {\bf 5}$.} 

\medskip

If $c = 0$, then $M \cong \C^n$, where `$\cong$' means
`{\sl locally biholomorphic}'; if $n = 0$, then
$M \cong \R^c$. Assume therefore $c \geqslant 1$ and $n \geqslant 1$.
The possible CR dimensions and real codimensions are:
\[
\aligned
&
2n+c={\bf 3}
\ \ \ \ \ \
\Longrightarrow
\ \
\Big\{
n={\bf 1},\ \ \ \ \
c={\bf 1},
\\
&
2n+c={\bf 4}
\ \ \ \ \ \
\Longrightarrow
\ \
\Big\{
n={\bf 1},\ \ \ \ \
c={\bf 2},
\\
&
2n+c={\bf 5}
\ \ \ \ \ \
\Longrightarrow
\ \
\left\{
\aligned
&
n={\bf 1},\ \ \ \ \
c={\bf 3},
\\
&
n={\bf 2},\ \ \ \ \
c={\bf 1}.
\endaligned\right.
\endaligned
\]

In local coordinates $(z_1, \dots, z_n, w_1, \dots, w_c \big) = ( x_1
+ {\scriptstyle{\sqrt{-1}}}\, y_1, \dots, x_n+ {\scriptstyle{\sqrt{-1}}}\, y_n,\, u_1+ {\scriptstyle{\sqrt{-1}}}\, v_1, \dots,
u_c+ {\scriptstyle{\sqrt{-1}}}\,v_c )$, represent with graphing $\mathcal{ C}^\omega$
functions $\varphi_{\scriptscriptstyle{\bullet}}$:
\[
\aligned
&
M^3\subset\C^2
\colon
\ \ \ \ \ \,
\Big[
\,\,
v
=
\varphi(x,y,u),
\\
&
M^4\subset\C^3
\colon
\ \ \ \ \
\left[
\aligned
\,\,
v_1
&
=
\varphi_1(x,y,u_1,u_2),
\\
\,\,
v_2
&
=
\varphi_2(x,y,u_1,u_2),
\endaligned\right.
\\
& 
M^5\subset\C^4
\colon
\ \ \ \ \
\left[
\aligned
\,\,
v_1
&
=
\varphi_1(x,y,u_1,u_2,u_3),
\\
\,\,
v_2
&
=
\varphi_2(x,y,u_1,u_2,u_3), 
\\
\,\,
v_3
&
=
\varphi_3(x,y,u_1,u_2,u_3),
\endaligned\right.
\\
&
M^5\subset\C^3
\colon
\ \ \ \ \ \
\Big[
\,\,
v
=
\varphi(x_1,y_1,x_2,y_2,u).
\endaligned
\]
Before proceeding further, answer (partly) Webster's quote.

\subsection{Explicit characterization of sphericity}

Consider for instance a hypersurface $M^3 \subset \C^2$.
As its graphing function $\varphi$ is real analytic, 
$w$ can be (locally) solves (\cite{ Merker-Porten-2006, 
Merker-2011}):
\[
w
=
\Theta
\big(z,\overline{z},\overline{w}\big).
\]
Letting the `round' unit 3-sphere $S^3 \subset \C^2$ be:
\[
1
=
z\overline{z}
+
w\overline{w}
=
x^2+y^2+u^2+v^2,
\]
a Cayley transform (\cite{ Merker-2011}) 
maps $S^3 \backslash \{ p_\infty\}$ with $p_\infty := 
(0, -1)$ biholomorphically
onto the {\sl Heisenberg sphere}:
\[
w
=
\overline{w}+2{\scriptstyle{\sqrt{-1}}}\,z\overline{z}.
\]

An intrinsic local generator for the fundamental
subbundle:
\[
T^{1,0}M 
:= 
\big\{ 
X-{\scriptstyle{\sqrt{-1}}}\,J(X) 
\colon 
X\in TM\cap J(TM)
\big\}
\]
of $TM \otimes_\R \C$ is:
\[
L
=
\frac{\partial}{\partial z}.
\]
Also, an intrinsic generator for $T^{0, 1}M := \overline{
T^{1, 0} M}$ is:
\[
\overline{L}
:=
\frac{\partial}{\partial\overline{z}}
-
\frac{\Theta_{\overline{z}}(z,\overline{z},\overline{w})}{
\Theta_{\overline{w}}(z,\overline{z},\overline{w})}\,
\frac{\partial}{\partial\overline{w}}.
\]
In the Lie bracket:
\[
\big[L,\,\overline{L}\big]
=
\bigg[
\frac{\partial}{\partial z},\,
\frac{\partial}{\partial\overline{z}}
-
\frac{\Theta_{\overline{z}}}{\Theta_{\overline{w}}}\,
\frac{\partial}{\partial\overline{w}}
\bigg]
=
\bigg(
\frac{-\,\Theta_{\overline{w}}\,\Theta_{z\overline{z}}
+
\Theta_{\overline{z}}\,\Theta_{z\overline{w}}}{
\Theta_{\overline{w}}\,\Theta_{\overline{w}}}
\bigg)\,
\frac{\partial}{\partial\overline{w}},
\]
an explicit {\sl Levi factor in coordinates} appears:
\[
\frac{\pmb{-\,\Theta_{\overline{w}}\,\Theta_{z\overline{z}}
+
\Theta_{\overline{z}}\,\Theta_{z\overline{w}}}}{
\pmb{\Theta_{\overline{w}}\,\Theta_{\overline{w}}}}.
\]
The assumption that $M^3 \subset \C^2$ is smooth reads:
\[
0
\neq
\pmb{\Theta_{\overline{w}}}
\,\,\,\,
\text{\rm vanishes at no point}.
\]
The assumption that $M$ is Levi nondegenerate reads:
\[
0
\neq
\pmb{-\,\Theta_{\overline{w}}\,\Theta_{z\overline{z}}
+
\Theta_{\overline{z}}\,\Theta_{z\overline{w}}}
\,\,\,\,
\text{\rm also vanishes at no point}.
\]

\medskip\noindent{\bf General principle.} 
{\em Various geometric assumptions enter computations in 
denominator places}.

\medskip

Here is a first illustration. 

\begin{theorem}
\label{explicit-sphericity}
{\rm (\cite{Merker-2011})}
An arbitrary real analytic hypersurface $M^3 \subset \C^2$
which is Levi nondegenerate:
\[
w
=
\Theta\big(z,\,\overline{z},\,\overline{w}\big),
\]
is locally biholomorphically equivalent to the Heisenberg sphere if
and only if:
\[
\aligned
0
\equiv
\bigg(
\frac{-\,\Theta_{\overline{w}}}{
\Theta_{\overline{z}}\Theta_{z\overline{w}}
-\Theta_{\overline{w}}\Theta_{z\overline{z}}}\,
\frac{\partial}{\partial\overline{z}}
+
\frac{\Theta_{\overline{z}}}{
\Theta_{\overline{z}}\Theta_{z\overline{w}}
-\Theta_{\overline{w}}\Theta_{z\overline{z}}}\,
\frac{\partial}{\partial\overline{w}}
\bigg)^2
\big[{\sf AJ}^4(\Theta)\big]
\endaligned
\]
identically in $\C \big\{ z, \overline{ z}, \overline{ w} \big\}$,
where:
\[
\footnotesize
\aligned
{\sf AJ}^4(\Theta)
&
:=
\frac{1}{
\pmb{[\Theta_{\overline{z}}\Theta_{z\overline{w}}
-\Theta_{\overline{w}}\Theta_{z\overline{z}}]^3}}
\bigg\{
\Theta_{zz\overline{z}\overline{z}}
\bigg(
\Theta_{\overline{w}}\Theta_{\overline{w}}
\left\vert\!\!
\begin{array}{cc}
\Theta_{\overline{z}} & \Theta_{\overline{w}}
\\
\Theta_{z\overline{z}} & \Theta_{z\overline{w}}
\end{array}
\!\!\right\vert
\bigg)
-
\\
&
\ \ \ \ \
-\,
2\Theta_{zz\overline{z}\overline{w}}
\bigg(
\Theta_{\overline{z}}\Theta_{\overline{w}}
\left\vert\!\!
\begin{array}{cc}
\Theta_{\overline{z}} & \Theta_{\overline{w}}
\\
\Theta_{z\overline{z}} & \Theta_{z\overline{w}}
\end{array}
\!\!\right\vert
\bigg)
+
\Theta_{zz\overline{w}\overline{w}}
\bigg(
\Theta_{\overline{z}}\Theta_{\overline{z}}
\left\vert\!\!
\begin{array}{cc}
\Theta_{\overline{z}} & \Theta_{\overline{w}}
\\
\Theta_{z\overline{z}} & \Theta_{z\overline{w}}
\end{array}
\!\!\right\vert
\bigg)
+
\\
&
\ \ \ \ \
+
\Theta_{zz\overline{z}}
\bigg(
\Theta_{\overline{z}}\Theta_{\overline{z}}
\left\vert\!\!
\begin{array}{cc}
\Theta_{\overline{w}} & \Theta_{\overline{w}\overline{w}}
\\
\Theta_{z\overline{w}} & \Theta_{z\overline{w}\overline{w}}
\end{array}
\!\!\right\vert
-
2\Theta_{\overline{z}}\Theta_{\overline{w}}
\left\vert\!\!
\begin{array}{cc}
\Theta_{\overline{w}} & \Theta_{\overline{z}\overline{w}}
\\
\Theta_{z\overline{w}} & \Theta_{z\overline{z}\overline{w}}
\end{array}
\!\!\right\vert
+
\Theta_{\overline{w}}\Theta_{\overline{w}}
\left\vert\!\!
\begin{array}{cc}
\Theta_{\overline{w}} & \Theta_{\overline{z}\overline{z}}
\\
\Theta_{z\overline{w}} & \Theta_{z\overline{z}\overline{z}}
\end{array}
\!\!\right\vert
\bigg)
+
\\
&
\ \ \ \ \
+
\Theta_{zz\overline{w}}
\bigg(\!\!
-\Theta_{\overline{z}}\Theta_{\overline{z}}
\left\vert\!\!
\begin{array}{cc}
\Theta_{\overline{z}} & \Theta_{\overline{w}\overline{w}}
\\
\Theta_{z\overline{z}} & \Theta_{z\overline{w}\overline{w}}
\end{array}
\!\!\right\vert
+
2\Theta_{\overline{z}}\Theta_{\overline{w}}
\left\vert\!\!
\begin{array}{cc}
\Theta_{\overline{z}} & \Theta_{\overline{z}\overline{w}}
\\
\Theta_{z\overline{z}} & \Theta_{z\overline{z}\overline{w}}
\end{array}
\!\!\right\vert
-
\Theta_{\overline{w}}\Theta_{\overline{w}}
\left\vert\!\!
\begin{array}{cc}
\Theta_{\overline{z}} & \Theta_{\overline{z}\overline{z}}
\\
\Theta_{z\overline{z}} & \Theta_{z\overline{z}\overline{z}}
\end{array}
\!\!\right\vert
\bigg)
\bigg\}.
\endaligned
\]
\end{theorem}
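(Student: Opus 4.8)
The plan is to treat this as an instance of Élie Cartan's method of equivalence for Levi nondegenerate $3$-dimensional CR structures, the homogeneous model of which is precisely the Heisenberg sphere. First I would encode the CR structure of $M^3 \subset \C^2$ by the intrinsic coframe dual to $\{L,\overline L, T\}$, where $L=\partial/\partial z$, where $\overline L$ is the antiholomorphic generator displayed above, and where $T$ is a real transverse field. The single bracket relation $[L,\overline L]\equiv(\text{Levi factor})\cdot\partial_{\overline w}$ computed above already isolates the Levi determinant $\Delta := \Theta_{\overline z}\Theta_{z\overline w}-\Theta_{\overline w}\Theta_{z\overline z}$, whose nonvanishing is exactly the standing Levi nondegeneracy hypothesis. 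Lifting this coframe to the bundle of admissible coframes and prolonging produces an $\{e\}$-structure — equivalently an $\mathfrak{su}(2,1)$-valued Cartan connection — whose flat model is the sphere. By Cartan's theorem, $M$ is locally biholomorphic to the Heisenberg sphere if and only if the Cartan curvature of this connection vanishes identically, so the whole problem reduces to expressing that curvature explicitly through $\Theta$.

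Second, I would run the absorption--normalization loop. At each reduction one solves linear equations for the group parameters so as to annihilate the maximal amount of torsion, and solvability at every stage rests on invertibility of the Levi form, i.e. on $\Delta\neq 0$; this is exactly why $\Delta$ and its powers surface in denominator places, in accordance with the general principle stated above. Since $n=1$, the would-be fourth-order Chern--Moser tensor $S_{\alpha\overline\beta\gamma\overline\delta}$ is vacuous, hence every fourth-order torsion coefficient can be normalized away, and the first genuinely obstructing quantity appears only at sixth order. What the fourth-order step leaves behind is a weight-homogeneous differential expression in $\Theta$ of order four, namely ${\sf AJ}^4(\Theta)$: the bordered $2\times 2$ determinants are the natural Levi-weighted building blocks, and the prefactor $\Delta^{-3}$ is forced by weight homogeneity. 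This ${\sf AJ}^4(\Theta)$ is gauge-dependent, not yet a CR invariant on its own (consistently with the vacuity of the order-four tensor), but it serves as the potential whose second covariant derivative is invariant.

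Third, I would identify the correct invariant differentiation. The curvature arises from ${\sf AJ}^4(\Theta)$ by CR-covariant differentiation along the antiholomorphic direction, and the properly weighted such operator is
\[
\overline{\mathcal L}
=
\frac{-\,\Theta_{\overline w}}{\Delta}\,\frac{\partial}{\partial\overline z}
+
\frac{\Theta_{\overline z}}{\Delta}\,\frac{\partial}{\partial\overline w}
=
\frac{-\,\Theta_{\overline w}}{\Delta}\,\overline L ,
\]
a fixed rescaling of $\overline L$ by the weight factor $-\Theta_{\overline w}/\Delta$. The sixth-order Cartan curvature is then $\overline{\mathcal L}^{\,2}\big[{\sf AJ}^4(\Theta)\big]$, the two extra derivatives raising the order from four to six, and Cartan's flatness theorem yields the assertion: $M$ is spherical if and only if $\overline{\mathcal L}^{\,2}[{\sf AJ}^4(\Theta)]\equiv 0$. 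Reality of the underlying structure makes this single complex equation equivalent to the vanishing of the full curvature, its conjugate component being forced simultaneously.

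Finally, the main obstacle. The conceptual scaffolding above is standard, but the genuine work — and the substance of \cite{Merker-2011} — is the explicit elimination: carrying the abstract normalized invariant through every prolongation step and collapsing it into the closed formula for ${\sf AJ}^4(\Theta)$, while tracking all signs and the precise power $\Delta^{-3}$ dictated by the weights. Two points demand particular care: verifying that the sixth-order obstruction factors \emph{exactly} as $\overline{\mathcal L}^{\,2}$ applied to the fourth-order potential ${\sf AJ}^4$, rather than as some less structured mixture of $L$- and $\overline L$-derivatives; and checking the normalization against the Heisenberg model $w=\overline w+2\sqrt{-1}\,z\overline z$, where $\Theta_{z\overline z}$ is constant and every determinant block collapses, so that ${\sf AJ}^4$ and all its covariant derivatives vanish — confirming, in accordance with Webster's quote, that all invariants of the sphere are indeed zero.
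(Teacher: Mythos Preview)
The paper does not prove this theorem here; it is quoted from \cite{Merker-2011} as an illustration of the rationality principle, so there is no in-text argument to match against. That said, the route taken in \cite{Merker-2011} is \emph{not} the direct CR Cartan equivalence you outline. It passes instead through the Segre foliation: the family of Segre curves $w=\Theta(z,\overline z_0,\overline w_0)$, after eliminating the parameters $(\overline z_0,\overline w_0)$ via $w=\Theta$ and $w'=\Theta_z$, yields a second-order ODE $w''=\Phi(z,w,w')$, and sphericity of $M$ is equivalent to point-equivalence of this ODE with $w''=0$. The classical Tresse--Cartan invariants then control the problem, and in particular $\Phi_{w'w'w'w'}$ is one of them; the reality condition on $M$ forces the companion invariant to vanish simultaneously. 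In this picture the operator $\overline{\mathcal L}$ in the statement is \emph{exactly} $\partial/\partial w'$ transported back to the $(\overline z,\overline w)$-variables by implicit differentiation, and ${\sf AJ}^4(\Theta)$ is $\Phi_{w'w'}$ so expressed. The factorization $\overline{\mathcal L}^{\,2}[{\sf AJ}^4]=\Phi_{w'w'w'w'}$ is then a tautology, not a miracle of the prolongation procedure.

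Your framework is not wrong --- vanishing of the $\mathfrak{su}(2,1)$ Cartan curvature is of course equivalent to sphericity --- but it leaves the two points you yourself flag as ``demanding particular care'' genuinely unresolved. First, within the CR absorption--normalization loop there is no \emph{a priori} reason the sixth-order obstruction should factor as the square of a single first-order operator acting on a fourth-order scalar; you assert this but give no mechanism. Second, your description of ${\sf AJ}^4$ as ``what the fourth-order step leaves behind'' and as a ``gauge-dependent potential'' is speculative: in the ODE interpretation ${\sf AJ}^4=\Phi_{w'w'}$ is a perfectly concrete object, and its role is not that of an intermediate torsion residue. The Segre/ODE route buys you both of these structural facts for free and reduces the explicit computation to a (still laborious, but mechanical) implicit-function exercise, which is what \cite{Merker-2011} carries out.
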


In fact, $\pmb{ \Theta_{\overline{ w}}}$ {\em also} enters denominator,
but erases in the equation `$=0$'.

\subsection{Theorema Egregium of Gauss}

Briefly, here is a second illustration.
On an embedded surface $S^2 \subset \R^3 \ni (x, y, z)$, 
consider local curvilinear bidimensional coordinates 
$(u, v)$:

\begin{center}
\begin{picture}(0,0)%
\includegraphics{u-v-x-y-z.pstex}%
\end{picture}%
\setlength{\unitlength}{4144sp}%
\begingroup\makeatletter\ifx\SetFigFont\undefined%
\gdef\SetFigFont#1#2#3#4#5{%
  \reset@font\fontsize{#1}{#2pt}%
  \fontfamily{#3}\fontseries{#4}\fontshape{#5}%
  \selectfont}%
\fi\endgroup%
\begin{picture}(3488,782)(812,-497)
\end{picture}%

\end{center}

\noindent
through parametric equations
\[
x
=
x(u,v),
\ \ \ \ \ \ \ \ \ \ \ 
y
=
y(u,v),
\ \ \ \ \ \ \ \ \ \ \ 
z
=
z(u,v).
\ \ \ \ \ \ \ \ \ \ \ 
\]
The flat Pythagorean metric $dx^2 + dy^2 + dz^2$
on $\R^3$ induces on $S^2$:
\[
\aligned
ds^2
=
&\,
\big\vert\!\!\big\vert
(du,dv)
\big\vert\!\!\big\vert^2
=
E\,du^2
+
2F\,dudv
+
G\,dv^2,
\\
\text{\rm with:}\ \ \ \
E
:=
&\,
x_u^2+y_u^2+z_u^2,
\ \ \ \ \ \ \ \ \ \ \ 
F
:=
x_ux_v+y_uy_v+z_uz_v,
\ \ \ \ \ \ \ \ \ \ \ 
G
:=
x_v^2+y_v^2+z_v^2.
\endaligned
\]

\begin{center}
\begin{picture}(0,0)%
\includegraphics{application-de-Gauss.pstex}%
\end{picture}%
\setlength{\unitlength}{4144sp}%
\begingroup\makeatletter\ifx\SetFigFont\undefined%
\gdef\SetFigFont#1#2#3#4#5{%
  \reset@font\fontsize{#1}{#2pt}%
  \fontfamily{#3}\fontseries{#4}\fontshape{#5}%
  \selectfont}%
\fi\endgroup%
\begin{picture}(5302,1980)(1274,-2081)
\put(1296,-1196){\makebox(0,0)[lb]{\smash{{\SetFigFont{9}{10.8}{\familydefault}{\mddefault}{\updefault}{\color[rgb]{0,0,0}$S$}%
}}}}
\put(4779,-1939){\makebox(0,0)[lb]{\smash{{\SetFigFont{9}{10.8}{\familydefault}{\mddefault}{\updefault}{\color[rgb]{0,0,0}$\Sigma$}%
}}}}
\put(2949,-922){\makebox(0,0)[lb]{\smash{{\SetFigFont{9}{10.8}{\familydefault}{\mddefault}{\updefault}{\color[rgb]{0,0,0}$\mathcal{A}_S$}%
}}}}
\put(4559,-1303){\makebox(0,0)[lb]{\smash{{\SetFigFont{9}{10.8}{\familydefault}{\mddefault}{\updefault}{\color[rgb]{0,0,0}$\mathcal{A}_\Sigma$}%
}}}}
\put(3083,-608){\makebox(0,0)[lb]{\smash{{\SetFigFont{7}{8.4}{\familydefault}{\mddefault}{\updefault}{\color[rgb]{0,0,0}$p$}%
}}}}
\put(3185,-690){\makebox(0,0)[lb]{\smash{{\SetFigFont{6}{7.2}{\familydefault}{\mddefault}{\updefault}{\color[rgb]{0,0,0}$q$}%
}}}}
\put(3237,-775){\makebox(0,0)[lb]{\smash{{\SetFigFont{6}{7.2}{\familydefault}{\mddefault}{\updefault}{\color[rgb]{0,0,0}$q$}%
}}}}
\put(2847,-685){\makebox(0,0)[lb]{\smash{{\SetFigFont{6}{7.2}{\familydefault}{\mddefault}{\updefault}{\color[rgb]{0,0,0}$q$}%
}}}}
\put(2955,-644){\makebox(0,0)[lb]{\smash{{\SetFigFont{6}{7.2}{\familydefault}{\mddefault}{\updefault}{\color[rgb]{0,0,0}$q$}%
}}}}
\put(4623,-795){\makebox(0,0)[lb]{\smash{{\SetFigFont{8}{9.6}{\familydefault}{\mddefault}{\updefault}{\color[rgb]{0,0,0}Auxiliary unit sphere}%
}}}}
\end{picture}%

\end{center}

The {\sl Gaussian curvature}
of $S$ at one of its points $p$ is:
\[
{\sf Curvature}(p)
:=
\lim_{\mathcal{A}_S\,\to\,p}\!\!
\frac{\text{ \rm 
area of the region}\
\mathcal{A}_\Sigma\
\text{\rm on the auxiliary unit sphere}
}{
\text{\rm area of the region}\ 
\mathcal{A}_S\ \text{\rm on the surface}}.
\]
When $S$ is graphed as $z = \varphi ( x, y)$, a first formula is:
\[
{\sf Curvature}
=
\frac{
\varphi_{xx}\,\varphi_{yy}-\varphi_{xy}\,\varphi_{xy}}{
1+\varphi_x^2+\varphi_y^2}.
\]
A splendid computation by Gauss provided its intrinsic meaning:
\[
\footnotesize
\aligned
&
{\sf Curvature}
= 
\frac{1}{\,\pmb{(EG-F^2)^2}}
\left\{ E \left[
\frac{ \partial E}{\partial v} \cdot
\frac{ \partial G}{\partial v} - 2\, 
\frac{ \partial F}{\partial u} \cdot 
\frac{ \partial G}{\partial v} +
\left(
\frac{\partial G}{\partial u}
\right)^2
\right] + \right. 
\\
& 
\ \ \ \ \ \ \ \ \ \ \ \ \ \ \ \ \ \ \ \ \
+ \left.
F \left[
\frac{\partial E}{\partial u} \cdot
\frac{\partial G}{\partial v}-
\frac{\partial E}{\partial v} \cdot
\frac{\partial G}{\partial u} -2\, 
\frac{\partial E}{\partial v} \cdot
\frac{\partial F}{\partial v}+ 4\, 
\frac{\partial F}{\partial u} \cdot
\frac{\partial F}{\partial v} - 2\, 
\frac{\partial F}{\partial u} \cdot
\frac{\partial G}{\partial u}
\right]
\right. 
+
\\
& 
\ \ \ \ \ \ \ \ \ \ \ \ \ \ \ \ \ \ \ \ \
+ \left.
G \left[
\frac{ \partial E}{\partial u} \cdot
\frac{ \partial G}{\partial u} - 2\, 
\frac{ \partial E}{\partial u} \cdot 
\frac{ \partial F}{\partial v} +
\left(
\frac{\partial E}{\partial v}
\right)^2
\right]
\right. 
- 
\\
& 
\ \ \ \ \ \ \ \ \ \ \ \ \ \ \ \ \ \ \ \ \
- 
\left.
2\, \left( EG-F^2 \right) \left[
\frac{\partial^2 E}{\partial v^2} -2\,
\frac{\partial^2 F}{\partial u \partial v } + 
\frac{\partial^2 G}{\partial u^2}
\right]
\right\},
\endaligned
\]
and, in the denominator appears $E\,G - F^2$ which is $> 0$ in any
metric.

\subsection{Propagation of sphericity across Levi degenerate points}

Here is one application of {\em explicit rational expressions} as
above. Developing Pinchuk's techniques of extension along Segre
varieties (\cite{ Pinchuk-1975}), Kossovskiy-Shafikov (\cite{
Kossovskiy-Shafikov-2012}) showed that local biholomorphic
equivalence to a model {\sl $(k, n-k)$-pseudo-sphere:}
\[
w
=
\overline{w}
+
2i\big(
-\,z_1\overline{z}_1
-\cdots-
z_k\overline{z}_k
+
z_{k+1}\overline{z}_{k+1}
+\cdots+
z_n\overline{z}_n
\big),
\] 
propagates on any connected real analytic hypersurface $M \subset \C^{
n+1}$ which is Levi nondegenerate outside some $n$-dimensional
complex hypersurface $\Sigma \subset M$. A more general statement,
not known with Segre varieties techniques, is:

\begin{theorem}
\label{propagation-sphericity}
{\rm (\cite{ Merker-2013-transfer})}
If a connected $\mathcal{ C}^\omega$
hypersurface $M \subset \C^{ n + 1}$
is locally biholomorphic, in a neighborhood of one of its
points $p$, to some $(k, n-k)$-pseudo-sphere, then
locally at every other Levi nondegenerate point $q \in M
\big\backslash \Sigma_{\sf LD}$, this hypersurface $M$ is also locally
biholomorphic to some Heisenberg $(l, n - l)$-pseudo-sphere, with
(\cite{ Kossovskiy-Shafikov-2012}),
possibly $l \neq k$.
\end{theorem}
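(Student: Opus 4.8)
The plan is to reduce the assertion to the \emph{identity principle} for real-analytic functions on the connected manifold $M$, once sphericity has been encoded by the vanishing of a single explicit curvature tensor. In the model case $M^3 \subset \C^2$, Theorem~\ref{explicit-sphericity} already shows that local biholomorphic equivalence to the Heisenberg sphere is equivalent to the identical vanishing of the sixth-order scalar $(\overline{L})^2\big[{\sf AJ}^4(\Theta)\big]$, an expression that is \emph{rational} in the jets of the graphing function $\Theta$, with denominator a power of the Levi factor $\Theta_{\overline{z}}\Theta_{z\overline{w}} - \Theta_{\overline{w}}\Theta_{z\overline{z}}$. The first step is to establish the analogous statement in $\C^{n+1}$: a Levi nondegenerate $\mathcal{C}^\omega$ hypersurface is locally biholomorphic to a $(k,n-k)$-pseudo-sphere if and only if its Chern--Moser (Cartan) curvature tensor $S$ vanishes identically near the point in question, and each component of $S$ is again rational in the jets of a local defining function, the denominator being a fixed power $\Delta^N$ of the Levi determinant $\Delta$, which is nonzero exactly on $M \setminus \Sigma_{\sf LD}$.

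Second, I would clear denominators. Multiplying $S$ by $\Delta^N$ produces a tensor $\widetilde{S} := \Delta^N \cdot S$ whose components are now \emph{polynomial} in the jets of the defining function, hence real-analytic on all of $M$, the degeneracy locus $\Sigma_{\sf LD}$ included. By hypothesis $M$ is spherical near $p$ (so $p$ is Levi nondegenerate), whence $S \equiv 0$ on a neighborhood $U_p \subset M \setminus \Sigma_{\sf LD}$, and therefore $\widetilde{S} \equiv 0$ on $U_p$ as well. Since $\widetilde{S}$ is a real-analytic section over the \emph{connected} real-analytic manifold $M$ vanishing on the nonempty open set $U_p$, the identity principle forces $\widetilde{S} \equiv 0$ on the whole of $M$ --- notably across $\Sigma_{\sf LD}$, even though $S$ itself is a priori undefined there.

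Third, I would restore genuine sphericity at the target point. For any Levi nondegenerate $q \in M \setminus \Sigma_{\sf LD}$ one has $\Delta(q) \neq 0$, so $\Delta^N$ is a nonvanishing real-analytic factor near $q$, and consequently $S = \widetilde{S}/\Delta^N \equiv 0$ in a neighborhood of $q$. By the characterization of the first step, $M$ is then locally biholomorphic near $q$ to a pseudo-sphere. The signature of that model is dictated by the signature of the Levi form at $q$: this signature is locally constant on $M \setminus \Sigma_{\sf LD}$ but may jump from one connected component of the complement of $\Sigma_{\sf LD}$ to another, which is precisely why one obtains an $(l, n-l)$-pseudo-sphere with possibly $l \neq k$.

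The main obstacle is the real-analytic extension in the second step: one must verify that the fixed power $\Delta^N$ genuinely absorbs \emph{all} poles of every component of $S$ along $\{\Delta = 0\}$, so that $\widetilde{S}$ extends across $\Sigma_{\sf LD}$ without spurious singularities. Concretely, this requires bounding the pole order of the Chern--Moser tensor along the Levi-degeneracy locus and checking that, after multiplication, the numerator is a bona fide polynomial in the jets --- the very ``geometric assumptions enter in denominator places'' phenomenon underlined above. Granting this rationality-of-denominators control, the identity principle does the rest; it is the analytic continuation of the \emph{cleared} invariant $\widetilde{S}$, rather than of $S$ directly, that allows sphericity to traverse $\Sigma_{\sf LD}$ regardless of whether $M \setminus \Sigma_{\sf LD}$ is connected.
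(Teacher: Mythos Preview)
Your three-step skeleton\,---\,encode sphericity by the vanishing of a rational invariant, clear the Levi-determinant denominator, and propagate the polynomial numerator by the identity principle\,---\,is exactly the paper's strategy. Two points deserve adjustment, though.

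First, you invoke the Chern--Moser tensor $S$ and assert that its components are rational with denominator $\Delta^N$. The paper explicitly flags the computation of the Chern--Moser--Webster curvatures \emph{in terms of a local graphing function} as a \emph{still open} question. What the paper actually uses is the Hachtroudi-type characterization of Theorem~\ref{k-n-k-pseudo-spherical}: a family of explicit fourth-order expressions, already visibly of the form $\text{polynomial}/\Delta^3$. So the ``main obstacle'' you isolate is not resolved by appealing to $S$; it is resolved by having Theorem~\ref{k-n-k-pseudo-spherical} in hand, which supplies the rationality and the exact pole order directly, without passing through the Cartan connection.

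Second, you speak of $\widetilde{S}$ as ``a real-analytic section over the connected real-analytic manifold $M$''. But the numerator is a polynomial in the jets of a \emph{local graphing function} $\Theta$, hence is only defined chart-by-chart and is not a global section of any natural bundle. The paper acknowledges this with the phrase ``small translations of coordinates are needed'': one propagates the vanishing of the numerator from one coordinate patch to an overlapping one, using that on the overlap (away from the thin set $\Sigma_{\sf LD}$) both numerators characterize the same biholomorphic-invariant condition. Connectedness of $M$ together with nowhere-density of $\Sigma_{\sf LD}$ then allows a chain-of-charts argument. Your global-section phrasing obscures this and would need to be replaced by such a local-to-global patching.
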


The proof, suggested by Beloshapka, consists first for $n = 1$ in
observing that after expansion, Theorem~\ref{explicit-sphericity}
characterizes sphericity as:
\[
0
\equiv
\frac{
{\sf polynomial}
\big(
\big(
\Theta_{z^j\overline{z}^k\overline{w}^l}
\big)_{1\leqslant j+k+l\leqslant 6}
\big)}{
\big[\Theta_{\overline{z}}\,\Theta_{z\overline{w}}
-
\Theta_{\overline{w}}\,\Theta_{z\overline{z}}\big]^7},
\]
at every Levi nondegenerate point $(z_p, w_p) \in M$ at which the
denominator is $\neq 0$. But this means that the numerator is $\equiv
0$ near $(z_p, \overline{ z}_p, \overline{ w}_p)$, and at every other
Levi nondegenerate point $(z_q, w_q) \in M$ close to $(z_p, w_p)$, the
numerator is also locally $\equiv 0$ by analytic continuation. 
Small translations of
coordinates are needed; the complete arguments
appear 
in~\cite{ Merker-2013-transfer}.

In dimensions $n \geqslant 2$, the explicit characterization of $(k,
n-k)$-pseudo-sphericity is also {\em rational}. Indeed, 
in local holomorphic coordinates:
\[
t=(z,w)
\,\in\,
\C^n\times\C,
\]
represent similarly a $\mathcal{ C}^\omega$ hypersurface $M^{
2n + 1} \subset \C^{ n+1}$ as:
\[
w 
= 
\Theta\big(z,\,\overline{z},\overline{w})
=
\Theta\big(z,\,\overline{t}\big).
\]
Introduce the Levi form Jacobian-like determinant: 
\[
\Delta
:=
\left\vert
\begin{array}{cccc}
\Theta_{\overline{z}_1} & \cdots & \Theta_{\overline{z}_n} 
& \Theta_{\overline{w}}
\\
\Theta_{z_1\overline{z}_1} & \cdots & \Theta_{z_1\overline{z}_n} 
& \Theta_{z_1\overline{w}}
\\
\cdot\cdot & \cdots & \cdot\cdot & \cdot\cdot
\\
\Theta_{z_n\overline{z}_1} & \cdots & \Theta_{z_n\overline{z}_n} 
& \Theta_{z_n\overline{w}}
\end{array}
\right\vert
=
\left\vert
\begin{array}{cccc}
\Theta_{\overline{t}_1} & \cdots & \Theta_{\overline{t}_n} 
& \Theta_{\overline{t}_{n+1}}
\\
\Theta_{z_1\overline{t}_1} & \cdots & \Theta_{z_1\overline{t}_n} 
& \Theta_{z_1\overline{t}_{n+1}}
\\
\cdot\cdot & \cdots & \cdot\cdot & \cdot\cdot
\\
\Theta_{z_n\overline{t}_1} & \cdots & \Theta_{z_n\overline{t}_n} 
& \Theta_{z_n\overline{t}_{n+1}}
\end{array}
\right\vert.
\]
It is nonzero at a point $p = (z_p, \overline{ t}_p)$ if and only
if $M$ is Levi nondegenerate at $p$.
For any index $\mu \in \{ 1, \dots, n, n+1\}$ and for any index $\ell
\in \{ 1, \dots, n\}$, let also $\Delta_{ [ 0_{ 1+\ell}]}^\mu$ denote
the same determinant, but with its $\mu$-th column replaced by the
transpose of the line $(0 \cdots 1 \cdots 0)$ with $1$ at the
$(1+\ell)$-th place, and $0$ elsewhere, its other columns being
untouched.
Similarly, for any indices $\mu, \nu,
\tau \in \{ 1, \dots, n, n+1\}$, denote by $\Delta_{ [ \overline{
t}^\mu \overline{ t}^\nu]}^\tau$ the same determinant as $\Delta$, but
with only its $\tau$-th column replaced by the transpose of the line:
\[
\big(
\Theta_{\overline{t}^\mu\overline{t}^\nu}\ \ 
\Theta_{z_1\overline{t}^\mu\overline{t}^\nu}\ \
\cdots\ \
\Theta_{z_n\overline{t}^\mu\overline{t}^\nu}
\big),
\]
other columns being again untouched. All these determinants $\Delta$,
$\Delta_{ [ 0_{ 1+\ell}]}^\mu$, $\Delta_{ [ \overline{ t}^\mu
\overline{ t}^\nu]}^\tau$ depend 
upon the third-order jet $J_{ z, \overline{
z}, \overline{ w}}^3 \Theta$.

\begin{theorem}
\label{k-n-k-pseudo-spherical}
\text{\rm (\cite{ Merker-Hachtroudi, Merker-2013-transfer})}
A $\mathcal{ C}^\omega$ hypersurface
$M \subset \C^{ n+1}$ with $n \geqslant 2$ which is Levi
nondegenerate at some point $p = (z_p, \overline{ z}_p,
\overline{ w}_p)$ is $(k, n-k)$-pseudo-spherical
at $p$ if and only if, identically for $(z, \overline{ z}, 
\overline{ w})$ near $(z_p, \overline{ z}_p,
\overline{ w}_p)$:
\[
\footnotesize
\aligned
0
&
\equiv
\frac{1}{\Delta^3}
\bigg[\,
\sum_{\mu=1}^{n+1}\,\sum_{\nu=1}^{n+1}
\bigg[
\Delta_{[0_{1+\ell_1}]}^\mu
\cdot
\Delta_{[0_{1+\ell_2}]}^\nu
\bigg\{
\Delta
\cdot
\frac{\partial^4\Theta}{
\partial z_{k_1}\partial z_{k_2}
\partial\overline{t}_\mu\partial\overline{t}_\nu}
-
\sum_{\tau=1}^{n+1}\,
\Delta_{[\overline{t}^\mu\overline{t}^\nu]}^\tau
\cdot
\frac{\partial^3\Theta}{
\partial z_{k_1}\partial z_{k_2}\partial\overline{t}^\tau}
\bigg\}
-
\\
&
-
{\textstyle{\frac{\delta_{k_1,\ell_1}}{n+2}}}\,
\sum_{\ell'=1}^n\,
\Delta_{[0_{1+\ell'}]}^\mu
\cdot
\Delta_{[0_{1+\ell_2}]}^\nu
\bigg\{
\Delta
\cdot
\frac{\partial^4\Theta}{
\partial z_{\ell'}\partial z_{k_2}
\partial\overline{t}_\mu\partial\overline{t}_\nu}
-
\sum_{\tau=1}^{n+1}\,
\Delta_{[\overline{t}^\mu\overline{t}^\nu]}^\tau
\cdot
\frac{\partial^3\Theta}{
\partial z_{\ell'}\partial z_{k_2}\partial\overline{t}^\tau}
\bigg\}
-
\\
&
-
{\textstyle{\frac{\delta_{k_1,\ell_2}}{n+2}}}\,
\sum_{\ell'=1}^n\,
\Delta_{[0_{1+\ell_1}]}^\mu
\cdot
\Delta_{[0_{1+\ell'}]}^\nu
\bigg\{
\Delta
\cdot
\frac{\partial^4\Theta}{
\partial z_{\ell'}\partial z_{k_2}
\partial\overline{t}_\mu\partial\overline{t}_\nu}
-
\sum_{\tau=1}^{n+1}\,
\Delta_{[\overline{t}^\mu\overline{t}^\nu]}^\tau
\cdot
\frac{\partial^3\Theta}{
\partial z_{\ell'}\partial z_{k_2}\partial\overline{t}^\tau}
\bigg\}
-
\\
&
-
{\textstyle{\frac{\delta_{k_2,\ell_1}}{n+2}}}\,
\sum_{\ell'=1}^n\,
\Delta_{[0_{1+\ell'}]}^\mu
\cdot
\Delta_{[0_{1+\ell_2}]}^\nu
\bigg\{
\Delta
\cdot
\frac{\partial^4\Theta}{
\partial z_{k_1}\partial z_{\ell'}
\partial\overline{t}_\mu\partial\overline{t}_\nu}
-
\sum_{\tau=1}^{n+1}\,
\Delta_{[\overline{t}^\mu\overline{t}^\nu]}^\tau
\cdot
\frac{\partial^3\Theta}{
\partial z_{k_1}\partial z_{\ell'}\partial\overline{t}^\tau}
\bigg\}
-
\\
&
-
{\textstyle{\frac{\delta_{k_2,\ell_2}}{n+2}}}\,
\sum_{\ell'=1}^n\,
\Delta_{[0_{1+\ell_1}]}^\mu
\cdot
\Delta_{[0_{1+\ell'}]}^\nu
\bigg\{
\Delta
\cdot
\frac{\partial^4\Theta}{
\partial z_{k_1}\partial z_{\ell'}
\partial\overline{t}_\mu\partial\overline{t}_\nu}
-
\sum_{\tau=1}^{n+1}\,
\Delta_{[\overline{t}^\mu\overline{t}^\nu]}^\tau
\cdot
\frac{\partial^3\Theta}{
\partial z_{k_1}\partial z_{\ell'}\partial\overline{t}^\tau}
\bigg\}
+
\\
&
\ \ \ \ \
+
{\textstyle{\frac{1}{(n+1)(n+2)}}}
\cdot
\big[
\delta_{k_1,\ell_1}\delta_{k_2,\ell_2}
+
\delta_{k_2,\ell_1}\delta_{k_1,\ell_2}
\big]
\cdot
\\
&
\ \ \ \ \
\cdot
\sum_{\ell'=1}^n\,\sum_{\ell''=1}^n\,
\Delta_{[0_{1+\ell'}]}^\mu
\cdot
\Delta_{[0_{1+\ell''}]}^\nu
\bigg\{
\Delta
\cdot
\frac{\partial^4\Theta}{
\partial z_{\ell'}\partial z_{\ell''}
\partial\overline{t}_\mu\partial\overline{t}_\nu}
-
\sum_{\tau=1}^{n+1}\,
\Delta_{[\overline{t}^\mu\overline{t}^\nu]}^\tau
\cdot
\frac{\partial^3\Theta}{
\partial z_{\ell'}\partial z_{\ell''}\partial\overline{t}^\tau}
\bigg],
\\
&
\ \ \ \ \ \ \ \ \ \ \ \ \ \ \ \ \ \ \ \ \ \ \ \ \ \ \ \ \ \ 
\ \ \ \ \ \ \ \ \ \ \ \ \ 
(1\,\leqslant k_1,\,k_2\,\leqslant\,n;\,\,\,
1\,\leqslant \ell_1,\,\ell_2\,\leqslant\,n).
\endaligned
\] 
\end{theorem}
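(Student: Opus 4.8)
The plan is to route the problem through the classical correspondence between Levi nondegenerate hypersurfaces and completely integrable systems of second-order partial differential equations, and then to invoke Hachtroudi's solution of the associated point-equivalence problem.

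First I would attach to $M$ its Segre family. For fixed antiholomorphic parameters $(\overline{\zeta}, \overline{\omega})$, the graphs $w = \Theta(z, \overline{\zeta}, \overline{\omega})$ sweep out an $n$-parameter family of complex hypersurfaces, and differentiating $w = \Theta(z, \overline{z}, \overline{w})$ once with respect to each $z_k$ produces $n$ relations that, thanks to the nonvanishing of the Levi determinant $\Delta$, can be solved by the implicit function theorem for $(\overline{z}, \overline{w})$ as functions of the first-order jet $(z, w, p)$, where $p_k := w_{z_k}$. Substituting these back into the second derivatives of $\Theta$ yields a complete second-order system
\[
w_{z_{k_1} z_{k_2}} = F_{k_1 k_2}(z, w, p), \qquad 1 \leqslant k_1, k_2 \leqslant n,
\]
whose solution set is exactly the Segre family. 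Two such hypersurfaces are biholomorphically equivalent if and only if their systems are equivalent under point transformations, and the $(k, n-k)$-pseudo-sphere, whose Segre leaves $w = \overline{\omega} + 2i(-z_1\overline{\zeta}_1 - \cdots + z_n \overline{\zeta}_n)$ are affine in $z$, corresponds to the flat system $F_{k_1 k_2} \equiv 0$.

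Second, I would invoke Hachtroudi's theory (\cite{Merker-Hachtroudi}): such a system carries a canonical normal projective connection, and for $n \geqslant 2$ the primary obstruction to flatness is a single curvature tensor, the analogue for this PDE system of the Chern--Moser tensor, namely the totally trace-free part of the Hessian $\partial^2 F_{k_1 k_2}/\partial p_{\ell_1}\partial p_{\ell_2}$ in the jet variables. The dimensional coefficients $\tfrac{1}{n+2}$ and $\tfrac{1}{(n+1)(n+2)}$ together with the Kronecker symbols appearing in the statement are precisely the weights of the successive trace removals (over the index pairs $(k_i,\ell_j)$) that assemble this Weyl-type tensor; its identical vanishing is equivalent to local equivalence with the flat model, hence to $(k, n-k)$-pseudo-sphericity at $p$.

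The decisive and laborious step is to re-express this abstract curvature in terms of $\Theta$ and its derivatives. Differentiating $F_{k_1 k_2}$ with respect to $p_\ell$ forces one to differentiate the implicit solutions $(\overline{z}, \overline{w})(z, w, p)$, and the inverse of the relevant Jacobian is furnished by Cramer's rule: its entries are exactly the cofactor determinants $\Delta_{[0_{1+\ell}]}^\mu$, while the chain rule acting on the second $\overline{t}$-derivatives produces the determinants $\Delta_{[\overline{t}^\mu\overline{t}^\nu]}^\tau$. Carrying this through turns each Hessian entry into the braced combination
\[
\Delta \cdot \frac{\partial^4 \Theta}{\partial z_{k_1} \partial z_{k_2} \partial \overline{t}_\mu \partial \overline{t}_\nu} - \sum_{\tau=1}^{n+1} \Delta_{[\overline{t}^\mu \overline{t}^\nu]}^\tau \cdot \frac{\partial^3 \Theta}{\partial z_{k_1} \partial z_{k_2} \partial \overline{t}^\tau},
\]
weighted by $\Delta_{[0_{1+\ell_1}]}^\mu \, \Delta_{[0_{1+\ell_2}]}^\nu$ and summed over $\mu, \nu$. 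The main obstacle is purely in the bookkeeping: one must verify that the several powers of $\Delta$ arising from repeated Cramer inversions collapse to the single prefactor $\Delta^{-3}$, that the trace terms reassemble with the correct signs and dimensional weights, and that the auxiliary factor $\Theta_{\overline{w}}$, which enters the individual determinants, cancels in the final equation $\equiv 0$, exactly as noted after Theorem~\ref{explicit-sphericity}. Once this identification is secured, the vanishing of the Hachtroudi curvature is expressed rationally as displayed, and combining it with the flatness criterion completes the proof; the case $n = 1$, where this tensor is dimensionally absent, is instead governed by Theorem~\ref{explicit-sphericity}.
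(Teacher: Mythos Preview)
Your proposal is correct and follows essentially the same route as the paper indicates: the explicit expression is obtained by translating Hachtroudi's curvature criterion for point-equivalence of completely integrable second-order systems $w_{z_{k_1}z_{k_2}} = \Phi_{k_1,k_2}(z,w,w_{z_1},\dots,w_{z_n})$ to the flat model $w'_{z'_{k_1}z'_{k_2}} = 0$, via the Segre family of $M$, and the determinants $\Delta$, $\Delta_{[0_{1+\ell}]}^\mu$, $\Delta_{[\overline{t}^\mu\overline{t}^\nu]}^\tau$ arise exactly through the Cramer inversion of the implicit solution for $(\overline{z},\overline{w})$ in terms of the first jet. The paper does not spell out the computation here but refers to \cite{Merker-Hachtroudi, Merker-2013-transfer}, where the bookkeeping you describe (collapse to $\Delta^{-3}$, trace-removal weights $\tfrac{1}{n+2}$, $\tfrac{1}{(n+1)(n+2)}$) is carried out in full.
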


Then as in the case $n = 1$, propagation of pseudo-sphericity `jumps'
across Levi degenerate points, because above,
the denominator $\Delta^3$ locates Levi nondegenerate points. This
explicit expression is a translation of Hachtroudi's characterization
(\cite{ Hachtroudi-1937}) of equivalence
to $w_{z_{k_1}'z_{k_2}'}'(z')
= 0$ of completely integrable {\sc pde}
systems:
\[
w_{z_{k_1}z_{k_2}}(z)
=
\Phi_{k_1,k_2}
\big(
z,\,w(z),\,w_{z_1}(z),\dots,w_{z_n}(z)
\big)
\ \ \ \ \ \ \ \ \ \ \ \ \ \ \ \ \ \
{\scriptstyle{(1\,\leqslant\,k_1,\,\,k_2\,\leqslant\,n)}}.
\]

\medskip\noindent{\bf Question still open.} 
{\sl Compute explicity the Chern-Moser-Webster $1$-forms 
and curvatures 
(\cite{ Chern-Moser-1974, Webster-1978})
in terms of a local graphing function
for a Levi nondegenerate $M^{2n +1} \subset \C^{n+1}$
(rigid and tube cases are treated in~\cite{ Isaev-2011}).}

\medskip

This would, in particular, provide an alternative proof of
Theorem~\ref{k-n-k-pseudo-spherical}.

\subsection{Zariski-generic $\mathcal{ C}^\omega$ CR manifolds of dimension 
$\leqslant 5$}

Coming back to $M^{ 2n + c} \subset \C^{n+c}$ of dimension $2n + c
\leqslant 5$, and calling {\sl Zariski-open} any complement $M
\backslash \Sigma$ of some {\em proper} real analytic subset $\Sigma
\subsetneqq M$, treat at first the:

\medskip\noindent{\bf Problem 2.} 
(Accessible subquestion of Problem 1)
{\sl Set up all possible initial geometries of connected $\mathcal{ C}^\omega$
CR-generic submanifolds $M^{ 2n+c} \subset \C^{ n+c}$ at
Zariski-generic points.}

\medskip

For general $M^{ 2n + c} \subset \C^{n+c}$, recall that
the fundamental invariant bundle is:
\[
T^{1,0}M 
:= 
\big\{ 
X-{\scriptstyle{\sqrt{-1}}}\,J(X) 
\colon 
X\in TM\cap J(TM)
\big\}.
\]

\begin{lemma}
\label{L-explicit} 
{\rm (\cite{Merker-5-CR})}
If a CR-generic $M^{2n+c} \subset \C^{n+c}$ is locally graphed as:
\[
v_j
=
\varphi_j
\big(x_1,\dots,x_n,y_1,\dots,y_n,u_1,\dots,u_c\big)
\ \ \ \ \ \ \ \ \ \ \ \ \
{\scriptstyle{(1\,\leqslant\,j\,\leqslant\,c)}},
\]
a local frame $\big\{ \mathcal{L}_1,\dots,\mathcal{L}_n \big\}$ for
$T^{ 1, 0}M$ consists of the $n$ vector fields:
\[
\mathcal{L}_i
=
\frac{\partial}{\partial z_k}
+
A_i^1\big(x_{\scriptscriptstyle{\bullet}},y_{\scriptscriptstyle{\bullet}},u_{\scriptscriptstyle{\bullet}}\big)\,
\frac{\partial}{\partial u_1}
+\cdots\cdots+
A_i^c\big(x_{\scriptscriptstyle{\bullet}},y_{\scriptscriptstyle{\bullet}},u_{\scriptscriptstyle{\bullet}}\big)\,
\frac{\partial}{\partial u_c}
\ \ \ \ \ \ \ \ \ \ \ \ \
{\scriptstyle{(1\,\leqslant\,i\,\leqslant\,n)}},
\]
having {\em rational} coefficient-functions:
\[
\footnotesize
\aligned
\!\!\!\!\!\!\!\!\!\!\!\!\!\!\!\!\!\!\!\!\!
A_i^1
\,=\,
\frac{
\left\vert\!\!
\begin{array}{cccc}
-\,\varphi_{1,z_i} & \varphi_{1,u_2} & \cdots & \varphi_{1,u_c}
\\
-\,\varphi_{2,z_i} & {\scriptstyle{\sqrt{-1}}}+\varphi_{2,u_2} & \cdots & \varphi_{2,u_c}
\\
\vdots & \vdots & \ddots & \vdots
\\
-\,\varphi_{c,z_i} & \varphi_{c,u_2} & \cdots & {\scriptstyle{\sqrt{-1}}}+\varphi_{c,u_c}
\end{array}
\!\!\right\vert
}{
\left\vert\!\!
\begin{array}{ccc}
{\scriptstyle{\sqrt{-1}}}+\varphi_{1,u_1} & \cdots & \varphi_{1,u_c}
\\
\varphi_{2,u_1} & \cdots & \varphi_{2,u_c}
\\
\vdots & \ddots & \vdots
\\
\varphi_{c,u_1} & \cdots & {\scriptstyle{\sqrt{-1}}}+\varphi_{c,u_c}
\end{array}
\!\!\right\vert
},\,\,\,
\dots\dots\dots,\,\,\,
A_i^c
\,=\,
\frac{
\left\vert\!\!
\begin{array}{cccc}
{\scriptstyle{\sqrt{-1}}}+\varphi_{1,u_1} & \cdots & -\,\varphi_{1,z_i}
\\
\varphi_{2,u_1} & \cdots & -\,\varphi_{2,z_i}
\\
\vdots & \ddots & \vdots
\\
\varphi_{c,u_1} & \cdots & -\,\varphi_{c,z_i}
\end{array}
\!\!\right\vert
}{
\left\vert\!\!
\begin{array}{ccc}
{\scriptstyle{\sqrt{-1}}}+\varphi_{1,u_1} & \cdots & \varphi_{1,u_c}
\\
\varphi_{2,u_1} & \cdots & \varphi_{2,u_c}
\\
\vdots & \ddots & \vdots
\\
\varphi_{c,u_1} & \cdots & {\scriptstyle{\sqrt{-1}}}+\varphi_{c,u_c}
\end{array}
\!\!\right\vert
}.
\endaligned
\]
\end{lemma}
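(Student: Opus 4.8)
The plan is to construct the vector fields $\mathcal{L}_i$ directly from the defining property of $T^{1,0}M$ and then read off the coefficients by Cramer's rule. First I would recall that in the complexified tangent bundle $TM \otimes_\R \C$, working with the holomorphic coordinates $(z_1,\dots,z_n,w_1,\dots,w_c)$, the sections of $T^{1,0}M$ are exactly the $(1,0)$-vector fields
\[
\mathcal{L} = \sum_{i=1}^n a_i\,\frac{\partial}{\partial z_i} + \sum_{j=1}^c b_j\,\frac{\partial}{\partial w_j}
\]
that are tangent to $M$. Tangency means that $\mathcal{L}$ annihilates each of the $c$ defining functions $\rho_j := w_j - \overline{w}_j - 2{\scriptstyle{\sqrt{-1}}}\,\varphi_j$ (equivalently $v_j - \varphi_j$ written in holomorphic/antiholomorphic coordinates). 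Since a pure $(1,0)$ field kills all the antiholomorphic generators $\partial/\partial\overline{z}_k$ and $\partial/\partial\overline{w}_j$ automatically, applying $\mathcal{L}$ to $\rho_j = 0$ yields a linear system in the unknowns $(a_i, b_j)$ whose coefficients are first-order derivatives of the $\varphi_j$.

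Next I would set up that linear system explicitly. Writing everything in terms of the real graphing variables via $w_j = u_j + {\scriptstyle{\sqrt{-1}}}\,v_j = u_j + {\scriptstyle{\sqrt{-1}}}\,\varphi_j$, the condition $\mathcal{L}(\rho_j)\equiv 0$ produces, for each $j \in \{1,\dots,c\}$, an equation of the shape
\[
\big({\scriptstyle{\sqrt{-1}}} + \varphi_{j,u_j}\big)\,b_j + \sum_{j'\neq j}\varphi_{j,u_{j'}}\,b_{j'} = -\,\varphi_{j,z_i}\cdot a_i,
\]
where I have already normalized by choosing $a_i = 1$ and all other $a$-components zero, which is the natural normalization making $\mathcal{L}_i = \partial/\partial z_k + \cdots$ with leading term $\partial/\partial z_k$. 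The matrix of this $c \times c$ system in the unknowns $(b_1,\dots,b_c)$ is precisely the Jacobian-type matrix with ${\scriptstyle{\sqrt{-1}}} + \varphi_{j,u_j}$ on the diagonal and $\varphi_{j,u_{j'}}$ off-diagonal, which appears as the common denominator in the statement. Its determinant is a polynomial in the first-order jet of $\varphi$ whose leading term is $({\scriptstyle{\sqrt{-1}}})^c$, hence nonvanishing at points where $M$ is genuinely CR-generic; this is exactly where the \emph{CR-generic} hypothesis enters, matching the paper's \textbf{General principle} that geometric assumptions sit in denominators.

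The final step is to solve this system by Cramer's rule: $b_j = A_i^j$ is the ratio of two determinants, the denominator being the system matrix above and the numerator being that same matrix with its $j$-th column replaced by the right-hand side column $(-\varphi_{1,z_i},\dots,-\varphi_{c,z_i})^{\mathsf T}$. Inspecting these numerators reproduces exactly the displayed formulas for $A_i^1$ (column $1$ replaced) through $A_i^c$ (column $c$ replaced), and the coefficients are manifestly rational in the entries of the first-order jet of $\varphi$ since they are quotients of determinants of those entries. I expect the only delicate point to be bookkeeping: verifying that $\mathcal{L}(\rho_j)$ really reduces to the stated linear system requires care in translating between the real variables $(x,y,u,v)$ and the holomorphic/antiholomorphic variables, using $v_j = \varphi_j$ and the chain rule to express $\partial/\partial z_i$ and $\partial/\partial w_j$ acting on $\rho_j$; once the substitution $v_j = \varphi_j$ is handled correctly and the ${\scriptstyle{\sqrt{-1}}}$ contributions from $\partial w_j/\partial u_{j} $ versus $\partial v_j/\partial u_j$ are tracked, the Cramer computation is routine and the rationality and the exact form of the determinants follow immediately.
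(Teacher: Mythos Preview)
Your approach is correct and is the standard one: impose that a $(1,0)$ vector field be tangent to $M$, obtain a $c\times c$ linear system for the unknown coefficients, and solve by Cramer's rule. The paper itself does not prove this lemma but simply states it with a citation to \cite{Merker-5-CR}; the special cases worked out elsewhere in the paper (e.g.\ $A = -\varphi_z/({\scriptstyle{\sqrt{-1}}}+\varphi_u)$ for $M^3\subset\C^2$, and the $3\times 3$ determinants for $M^5\subset\C^4$) are exactly the Cramer formulas, so your method matches what the cited reference does.

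One small point worth tightening: you begin with $\mathcal{L}=\sum a_i\,\partial_{z_i}+\sum b_j\,\partial_{w_j}$ in ambient $(1,0)$ form, but the lemma's $A_i^j$ are coefficients of the \emph{intrinsic} fields $\partial/\partial u_j$ on $M$, not of $\partial/\partial w_j$. You flag this as the delicate bookkeeping step, and indeed it is the only place care is needed. The cleanest route is to work intrinsically from the start: write $\mathcal{L}_i=\partial_{z_i}+\sum_k A_i^k\,\partial_{u_k}$ on $M$, push forward by the graph map, and require the $\partial_{\overline{w}_j}$ components to vanish. Using $\partial_{u_k}=\partial_{w_k}+\partial_{\overline{w}_k}$ and $\partial_{v_k}={\scriptstyle{\sqrt{-1}}}(\partial_{w_k}-\partial_{\overline{w}_k})$ one obtains directly
\[
\sum_{k=1}^{c}\big({\scriptstyle{\sqrt{-1}}}\,\delta_{jk}+\varphi_{j,u_k}\big)\,A_i^k=-\,\varphi_{j,z_i}
\qquad(1\leqslant j\leqslant c),
\]
which is exactly your system with the $A_i^k$ already being the desired coefficients, and Cramer's rule finishes.
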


Nonvanishing of the denominator is equivalent to CR-genericity of $M$.

\smallskip

Here is how $M$ transfers through local biholomorphisms.

\begin{center}
\begin{picture}(0,0)%
\includegraphics{M-h-polydiscs.pstex}%
\end{picture}%
\setlength{\unitlength}{4144sp}%
\begingroup\makeatletter\ifx\SetFigFont\undefined%
\gdef\SetFigFont#1#2#3#4#5{%
  \reset@font\fontsize{#1}{#2pt}%
  \fontfamily{#3}\fontseries{#4}\fontshape{#5}%
  \selectfont}%
\fi\endgroup%
\begin{picture}(4693,1052)(874,-2319)
\put(2002,-1922){\makebox(0,0)[lb]{\smash{{\SetFigFont{10}{12.0}{\familydefault}{\mddefault}{\updefault}{\color[rgb]{0,0,0}$p$}%
}}}}
\put(4844,-2009){\makebox(0,0)[lb]{\smash{{\SetFigFont{10}{12.0}{\familydefault}{\mddefault}{\updefault}{\color[rgb]{0,0,0}$p'$}%
}}}}
\put(1612,-1507){\makebox(0,0)[lb]{\smash{{\SetFigFont{10}{12.0}{\familydefault}{\mddefault}{\updefault}{\color[rgb]{0,0,0}${\sf U}_p$}%
}}}}
\put(943,-1939){\makebox(0,0)[lb]{\smash{{\SetFigFont{10}{12.0}{\familydefault}{\mddefault}{\updefault}{\color[rgb]{0,0,0}$M$}%
}}}}
\put(4459,-1546){\makebox(0,0)[lb]{\smash{{\SetFigFont{10}{12.0}{\familydefault}{\mddefault}{\updefault}{\color[rgb]{0,0,0}$h({\sf U}_p)$}%
}}}}
\put(3339,-1656){\makebox(0,0)[lb]{\smash{{\SetFigFont{10}{12.0}{\familydefault}{\mddefault}{\updefault}{\color[rgb]{0,0,0}$h$}%
}}}}
\put(5117,-1833){\makebox(0,0)[lb]{\smash{{\SetFigFont{10}{12.0}{\familydefault}{\mddefault}{\updefault}{\color[rgb]{0,0,0}$M'$}%
}}}}
\put(902,-1414){\makebox(0,0)[lb]{\smash{{\SetFigFont{10}{12.0}{\familydefault}{\mddefault}{\updefault}{\color[rgb]{0,0,0}$\C^{n+c}$}%
}}}}
\put(5552,-1470){\makebox(0,0)[lb]{\smash{{\SetFigFont{10}{12.0}{\familydefault}{\mddefault}{\updefault}{\color[rgb]{0,0,0}${\C'}^{n+c}$}%
}}}}
\end{picture}%

\end{center}

\begin{lemma}
{\rm (\cite{Merker-5-CR})}
Given a connected $\mathcal{ C}^\omega$ CR-generic submanifold
$M^{2n+c} \subset \C^{n+c}$ and a local biholomorphism between open
subsets:
\[
h\colon\ \ \
{\sf U}_p
\overset{\sim}{\,\longrightarrow\,}
h({\sf U}_p)
=
{\sf U}_{p'}'
\subset
{\C'}^{n+c},
\]
with $p \in M$, $p' = h (p)$, setting:
\[
M'
:=
h(M)
\,\subset\,
{\C'}^{n+c}
\ \ \ \ \ \ \ \ \ \ \ \ \
{\scriptstyle{(c\,=\,{\sf codim}\,M',\,\,\,
n\,=\,{\sf CRdim}\,M')}},
\]
then for any two local frames:
\[
\big\{\mathcal{L}_1,\dots,\mathcal{L}_n\big\}
\ \ \ \ \
\text{for}\ \
T^{1,0}M
\ \ \ \ \ \ \ \ \ \ \ \ \
\text{and}
\ \ \ \ \ \ \ \ \ \ \ \ \
\big\{\mathcal{L}_1',\dots,\mathcal{L}_n'\big\}
\ \ \ \ \
\text{for}\ \
T^{1,0}M',
\]
there exist
uniquely defined $\mathcal{ C}^\omega$ local
coefficient-functions:
\[
a_{i_1i_2}'\colon\ \ \
M'
\,\longrightarrow\,
\C
\ \ \ \ \ \ \ \ \ \ \ \ \
{\scriptstyle{(1\,\leqslant\,i_1,\,i_2\,\leqslant\,n)}},
\]
satisfying:
\[
\aligned
h_*\big(\mathcal{L}_1\big)
&
=
a_{11}'\,\mathcal{L}_1'
+\cdots+
a_{n1}'\,\mathcal{L}_n',
\\
\cdots\cdots\cdot
&
\cdots\cdots\cdots\cdots\cdots\cdots\cdots\cdots
\\
h_*\big(\mathcal{L}_n\big)
&
=
a_{1n}'\,\mathcal{L}_1'
+\cdots+
a_{nn}'\,\mathcal{L}_n'.
\endaligned
\]
\end{lemma}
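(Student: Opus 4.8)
**

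The plan is to prove this by combining the explicit frame from Lemma~\ref{L-explicit} with the fact that a biholomorphism preserves the invariant CR structure. First I would recall the defining geometric fact: since $h$ is biholomorphic, its real tangent map $h_*$ commutes with the complex structure, i.e.\ $h_* \circ J = J' \circ h_*$. Consequently $h_*$ carries $T_pM \cap J(T_pM)$ isomorphically onto $T_{p'}M' \cap J'(T_{p'}M')$, and therefore the complexified pushforward sends $T^{1,0}_pM$ onto $T^{1,0}_{p'}M'$. This is the structural heart of the matter: the bundle $T^{1,0}M$ was \emph{defined} intrinsically as $\{X - \sqrt{-1}\,J(X) : X \in TM \cap J(TM)\}$, so its invariance under biholomorphisms is immediate from $h_* J = J' h_*$ applied to each generator $X - \sqrt{-1}\,J(X)$.

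Next I would use this to define the coefficient functions. Fix a point $q' = h(q) \in M'$. By the previous paragraph, for each $i$ the vector $h_*(\mathcal{L}_i)|_{q'}$ lies in the fiber $T^{1,0}_{q'}M'$. Since $\{\mathcal{L}_1', \dots, \mathcal{L}_n'\}$ is a frame, it gives a basis of that fiber, so $h_*(\mathcal{L}_i)|_{q'}$ expands uniquely as a $\C$-linear combination of $\mathcal{L}_1'|_{q'}, \dots, \mathcal{L}_n'|_{q'}$; calling the coefficients $a_{1i}'(q'), \dots, a_{ni}'(q')$ yields exactly the asserted relations $h_*(\mathcal{L}_i) = \sum_{j=1}^n a_{ji}' \mathcal{L}_j'$. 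Uniqueness of the $a_{ji}'$ at each point is just uniqueness of coordinates relative to a basis; the nondegeneracy needed is precisely that $\{\mathcal{L}_j'\}$ is a frame, which holds because the common denominator in Lemma~\ref{L-explicit} is nonvanishing (CR-genericity of $M'$).

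The one point requiring genuine care is real-analytic regularity of the $a_{ji}'$, so I would isolate this as the main obstacle. Pointwise existence and uniqueness only produce functions; to see they are $\mathcal{C}^\omega$ I would argue that they solve a linear system with $\mathcal{C}^\omega$ data and invertible $\mathcal{C}^\omega$ coefficient matrix. Concretely, both $h$ and the frame coefficients $A_i^j$ from Lemma~\ref{L-explicit} are real analytic (the latter being rational in the $\mathcal{C}^\omega$ jet of $\varphi$ with nonvanishing denominator), so the components of $h_*(\mathcal{L}_i)$ and of $\mathcal{L}_j'$ in the ambient coordinate frame $\partial/\partial z_k', \partial/\partial u_l'$ are all $\mathcal{C}^\omega$. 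Writing the relation $h_*(\mathcal{L}_i) = \sum_j a_{ji}' \mathcal{L}_j'$ in these components gives a linear system $M'(q')\, \mathbf{a}_i'(q') = \mathbf{b}_i'(q')$ whose coefficient matrix $M'$ (built from the components of the $\mathcal{L}_j'$) has rank $n$ at every point; selecting an $n \times n$ minor that stays invertible near $q'$ and applying Cramer's rule expresses each $a_{ji}'$ as a ratio of $\mathcal{C}^\omega$ functions with nonvanishing denominator, hence as a $\mathcal{C}^\omega$ function. This also confirms the $a_{ji}'$ are globally well defined on the overlap, completing the proof.
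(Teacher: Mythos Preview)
Your proof is correct. The paper itself does not prove this lemma in-line; it is stated with a citation to \cite{Merker-5-CR} and then immediately applied (in the case $n=1$) in the proof of Lemma~\ref{3-rank}. Your argument supplies exactly the standard justification: invariance of $T^{1,0}$ under biholomorphisms from $h_*\circ J = J'\circ h_*$, pointwise existence and uniqueness of the coefficients from the frame property, and $\mathcal{C}^\omega$ regularity via Cramer's rule on the linear system written in ambient coordinates, the nonvanishing denominator being guaranteed by CR-genericity as in Lemma~\ref{L-explicit}.
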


\begin{definition}
Taking any local $1$-form $\rho_0 \colon TM \to \R$ whose
extension to $\C \otimes_\R TM$ satisfies:
\[
T^{1,0}M\oplus T^{0,1}M
=
\big\{\rho_0=0\big\},
\]
the Hermitian matrix of the {\sl Levi form} of $M$ at various points 
$p \in M$ is:
\[
\left(\!
\begin{array}{ccc}
\rho_0\big({\scriptstyle{\sqrt{-1}}}\,\big[\mathcal{L}_1,\overline{\mathcal{L}}_1\big]\big)
& \cdots &
\rho_0\big({\scriptstyle{\sqrt{-1}}}\,\big[\mathcal{L}_n,\overline{\mathcal{L}}_1\big]\big)
\\
\vdots & \ddots & \vdots
\\
\rho_0\big({\scriptstyle{\sqrt{-1}}}\,\big[\mathcal{L}_1,\overline{\mathcal{L}}_n\big]\big)
& \cdots &
\rho_0\big({\scriptstyle{\sqrt{-1}}}\,\big[\mathcal{L}_n,\overline{\mathcal{L}}_n\big]\big)
\end{array}
\!\right)
(p),
\]
the extra factor ${\scriptstyle{\sqrt{-1}}}$ being present in order
to counterbalance the change of sign:
\[
\overline{\big[\mathcal{L}_j,\,\overline{\mathcal{L}}_k\big]}
=
-\,\big[\mathcal{L}_k,\,\overline{\mathcal{L}}_j\big].
\] 
\end{definition}

As an application, show the invariance of Levi nondegeneracy. For
$M^3 \subset \C^2$ equivalent to ${M'}^3 \subset
{\C'}^2$, whence $n = n' = 1$, introduce local vector field
generators:
\[
\mathcal{L}
\ \ 
\text{\rm for}\ \
T^{1,0}M
\ \ \ \ \ \ \ \ \ \ \ \ \
\text{\rm and}
\ \ \ \ \ \ \ \ \ \ \ \ \
\mathcal{L}'
\ \ 
\text{\rm for}\ \
T^{1,0}M'.
\]

\begin{lemma}
\label{3-rank}
{\em At every point $q \in M$ near $p$:}
\[
\aligned
{\bf 3}
&
=
{\sf rank}_\C
\Big(
\mathcal{L}\big\vert_q,\,\,
\overline{\mathcal{L}}\big\vert_q,\,\,
\big[\mathcal{L},\overline{\mathcal{L}}\big]\big\vert_q
\Big)
\\
&
\ \ \ \ \
\Updownarrow
\\
{\bf 3}
&
=
{\sf rank}_\C
\Big(
\mathcal{L}'\big\vert_{h(q)},\,\,
\overline{\mathcal{L}}'\big\vert_{h(q)},\,\,
\big[\mathcal{L}',\overline{\mathcal{L}}'\big]\big\vert_{h(q)}
\Big).
\endaligned
\]
\end{lemma}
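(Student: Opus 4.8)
The plan is to exploit the naturality of the Lie bracket under the diffeomorphism underlying $h$, combined with the preceding transfer lemma. First I would record that, since $h$ is a biholomorphism, its real differential induces at each point a $\C$-linear isomorphism $h_* \colon \C \otimes_\R T_qM \to \C \otimes_\R T_{h(q)}M'$ which intertwines Lie brackets, $h_*[X,Y] = [h_*X, h_*Y]$. In the present case $n = 1$, the preceding lemma provides a single $\mathcal{C}^\omega$ function $a' \colon M' \to \C$ (its unique coefficient $a'_{11}$) with $h_*(\mathcal{L}) = a'\,\mathcal{L}'$; because $h_*$ is a pointwise isomorphism carrying the nonzero vector $\mathcal{L}\vert_q$ to $a'(h(q))\,\mathcal{L}'\vert_{h(q)}$, the function $a'$ must vanish nowhere. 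Conjugating the relation yields $h_*(\overline{\mathcal{L}}) = \overline{a'}\,\overline{\mathcal{L}'}$.

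Next I would push the Lie bracket forward and expand it by the Leibniz rule $[fX, gY] = fg\,[X,Y] + f(Xg)\,Y - g(Yf)\,X$:
\[
h_*\big[\mathcal{L},\,\overline{\mathcal{L}}\big]
=
\big[a'\mathcal{L}',\,\overline{a'}\,\overline{\mathcal{L}'}\big]
=
a'\overline{a'}\,\big[\mathcal{L}',\,\overline{\mathcal{L}'}\big]
+
a'\big(\mathcal{L}'\overline{a'}\big)\,\overline{\mathcal{L}'}
-
\overline{a'}\big(\overline{\mathcal{L}'}a'\big)\,\mathcal{L}'.
\]
Consequently the three pushed-forward fields are expressed in the frame $\big(\mathcal{L}',\,\overline{\mathcal{L}'},\,[\mathcal{L}',\overline{\mathcal{L}'}]\big)$ through the upper-triangular transition matrix
\[
\left(
\begin{array}{ccc}
a' & 0 & -\,\overline{a'}\big(\overline{\mathcal{L}'}a'\big)
\\
0 & \overline{a'} & a'\big(\mathcal{L}'\overline{a'}\big)
\\
0 & 0 & a'\overline{a'}
\end{array}
\right),
\]
whose determinant equals $\big(a'\,\overline{a'}\big)^2 = \vert a'\vert^4$, nonvanishing everywhere by the first paragraph.

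Finally I would conclude by two applications of rank-invariance. Since $h_*$ is, at each point, a $\C$-linear isomorphism of the three-dimensional complexified tangent spaces, it preserves the complex rank of any triple of vectors, so
\[
{\sf rank}_\C\big(\mathcal{L},\,\overline{\mathcal{L}},\,[\mathcal{L},\overline{\mathcal{L}}]\big)\big\vert_q
=
{\sf rank}_\C\big(h_*\mathcal{L},\,h_*\overline{\mathcal{L}},\,h_*[\mathcal{L},\overline{\mathcal{L}}]\big)\big\vert_{h(q)};
\]
and since the transition matrix just displayed is invertible, the right-hand rank coincides with ${\sf rank}_\C\big(\mathcal{L}',\,\overline{\mathcal{L}'},\,[\mathcal{L}',\overline{\mathcal{L}'}]\big)\big\vert_{h(q)}$. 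Chaining these equalities shows that one rank is $\mathbf{3}$ exactly when the other is, which is the asserted equivalence. I expect the only genuinely delicate point to be the bookkeeping behind the naturality identity $h_*[\mathcal{L},\overline{\mathcal{L}}] = [h_*\mathcal{L},\, h_*\overline{\mathcal{L}}]$ for complexified vector fields, together with the attendant verification that $a'$ never vanishes; once these are secured, the Leibniz expansion makes the transition matrix triangular and its determinant a manifestly nonzero power of $\vert a'\vert$, after which the statement reduces to formal linear algebra.
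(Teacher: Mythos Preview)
Your proof is correct and follows essentially the same route as the paper: both invoke the transfer relation $h_*(\mathcal{L})=a'\mathcal{L}'$ with $a'$ nowhere vanishing, expand $h_*\big[\mathcal{L},\overline{\mathcal{L}}\big]=\big[a'\mathcal{L}',\overline{a'}\,\overline{\mathcal{L}'}\big]$ by the Leibniz rule, and conclude via an invertible triangular change-of-frame matrix. The only cosmetic differences are that you arrange the matrix with the pushed-forward fields as columns (upper triangular, determinant $\vert a'\vert^4$) whereas the paper writes them as rows (lower triangular), and you spell out the two rank-preservation steps a bit more explicitly.
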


\begin{proof}
\smartqed
By what precedes, there exists a function
$a' \colon M'\longrightarrow\C \backslash\{ 0\}$ with:
\[
h_*(\mathcal{L})
=
a'\,\mathcal{L}'
\ \ \ \ \ \ \ \ \ \ \ \ \
\text{\rm and}
\ \ \ \ \ \ \ \ \ \ \ \ \
h_*\big(\overline{\mathcal{L}}\big)
=
\overline{a}'\,\overline{\mathcal{L}}'.
\]
Consequently:
\[
\aligned
h_*\big(\big[\mathcal{L},\overline{\mathcal{L}}\big]\big)
&
=
\big[h_*(\mathcal{L}),\,h_*\big(
\overline{\mathcal{L}}\big)\big]
\\
&
=
\big[a'\mathcal{L}',\,\overline{a}'\overline{\mathcal{L}}'\big]
\\
&
=
a'\overline{a}'\,
\big[\mathcal{L}',\,\overline{\mathcal{L}}'\big]
+
a'\,\mathcal{L}'(\overline{a}')\cdot\overline{\mathcal{L}}'
-
\overline{a}'\,\overline{\mathcal{L}}'(a')\cdot\mathcal{L}'.
\endaligned
\] 
Dropping the mention of $h_*$, because the {\sl change of frame} matrix:
\[
\left(\!
\begin{array}{c}
\mathcal{L}
\\
\overline{\mathcal{L}}
\\
\big[\mathcal{L},\overline{\mathcal{L}}\big]
\end{array}
\!\right)
\,=\,
\left(\!
\begin{array}{ccc}
a' & 0 & 0
\\
0 & \overline{a}' & 0
\\
\ast & \ast & a'\overline{a}'
\end{array}
\!\right)
\left(\!
\begin{array}{c}
\mathcal{L}'
\\
\overline{\mathcal{L}}'
\\
\big[\mathcal{L}',\overline{\mathcal{L}}'\big]
\end{array}
\!\right)
\]
is visibly of rank $3$, the result follows.
\qed
\end{proof}

Since $T^{1, 0} M$ and $T^{0, 1} M$ are Frobenius-involutive
(\cite{Merker-5-CR}), only iterated Lie brackets between $T^{1, 0}M$
and $T^{0, 1}M$ are nontrivial. For a $\mathcal{ C}^\omega$ connected
$M^{2n+c} \subset \C^{n+c}$ with $T^{1, 0}M$ having local generators
$\mathcal{L}_1, \dots, \mathcal{L}_n$, set:
\[
\aligned
\mathbb{L}_{\mathcal{L},\overline{\mathcal{L}}}^1
&
:=
\mathcal{C}^\omega\text{\rm -linear combinations of}\,\,
\mathcal{L}_1,\dots,\mathcal{L}_n,\,\,
\overline{\mathcal{L}}_1,\dots,\overline{\mathcal{L}}_n,
\\
\cdots\cdots
&
\cdots\cdots\cdots\cdots\cdots\cdots\cdots\cdots\cdots\cdots\cdots\cdots
\cdots\cdots\cdots\cdots\cdots\cdots
\\
\mathbb{L}_{\mathcal{L},\overline{\mathcal{L}}}^{\nu+1}
&
:=
\mathcal{C}^\omega\text{\rm -linear combinations of vector fields}\,\,
\mathcal{M}^\nu\in\mathbb{L}_{\mathcal{L},\overline{\mathcal{L}}}^\nu
\\
&
\ \ \ \ \ \ \ \ \ \ \ \ \ 
\text{\rm and of brackets}\,\,
\big[\mathcal{L}_k,\mathcal{M}^\nu\big],\,\,
\big[\overline{\mathcal{L}}_k,\mathcal{M}^\nu\big].
\endaligned
\]
Set:
\[
\mathbb{L}_{\mathcal{L},\overline{\mathcal{L}}}^{\sf Lie}
:=
\bigcup_{\nu\geqslant 1}\,
\mathbb{L}_{\mathcal{L},\overline{\mathcal{L}}}^\nu.
\]

By real analyticity (\cite{ Merker-Porten-2006}), there exists an
integer $c_M$ with $0\leqslant c_M\leqslant c$ and a proper real
analytic subset $\Sigma\, \subsetneqq\, M$ such that at every point $q
\in M\backslash\Sigma$:
\[
{\sf dim}_\C\,
\Big(
\mathbb{L}_{\mathcal{L},\overline{\mathcal{L}}}^{\sf Lie}(q)
\Big)
=
2n+c_M.
\]

\begin{theorem}
\text{\rm (Known, \cite{ Merker-Porten-2006})}
Every point $q\,\in\,M\backslash\Sigma$
has a small open neighborhood
${\sf U}_q
\subset
\C^{n+c}$
in which:
\[
M^{2n+c}
\cong\,
\underline{M}^{2n+c}
\]
biholomorphically, with a CR and $\mathcal{ C}^\omega$:
\[
\underline{M}^{2n+c}
\subset
\C^{n+c_M}
\times
\R^{c-c_M}.
\]
\end{theorem}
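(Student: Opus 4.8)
The plan is to recognize $\underline{M}$ as the local product of a single CR orbit with a flat totally real factor, the splitting being produced by holomorphically straightening the foliation associated with the Lie hull.

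First I would observe that the Lie hull $\mathbb{L}_{\mathcal{L},\overline{\mathcal{L}}}^{\sf Lie}$ is, by its very construction, closed both under Lie brackets and under complex conjugation, since it contains every $\mathcal{L}_i$ together with every $\overline{\mathcal{L}}_i$ and all their iterated brackets. Being conjugation-invariant and of constant complex rank $2n+c_M$ over the Zariski-open set $M\backslash\Sigma$, it must equal the complexification $\C\otimes_\R D$ of a real-analytic real distribution $D\subset TM$ of constant real rank $2n+c_M$. Closure under brackets makes $D$ involutive, so the real-analytic Frobenius theorem integrates $D$ into a foliation of $M\backslash\Sigma$ whose leaves are the local CR orbits $\mathcal{O}_q$, each of real dimension $2n+c_M$, with the $c-c_M$ complementary directions inside $M$ transverse to the foliation.

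Next I would analyze the CR structure carried by each leaf. Since $T^{1,0}M\oplus T^{0,1}M\subset\C\otimes_\R D$ and is itself Frobenius-involutive, each orbit $\mathcal{O}_q$ inherits CR dimension $n$ and is CR-generic inside a uniquely determined complex submanifold $\mathcal{X}_q\subset\C^{n+c}$ of complex dimension $n+c_M$, namely the integral manifold through $q$ of the holomorphic distribution generated by $T^{1,0}M$. As $q$ varies transversally, these complex leaves $\mathcal{X}_q$ sweep out a holomorphic foliation of a neighborhood of $q$ in $\C^{n+c}$, of complex leaf dimension $n+c_M$ and complex codimension $c-c_M$. Using the holomorphic Frobenius theorem I would choose a local biholomorphism carrying this foliation to the standard slicing of $\C^{n+c_M}_{t'}\times\C^{c-c_M}_{s'}$ by the planes $\{s'=\mathrm{const}\}$; in the new coordinates $M$ becomes a CR-generic $\underline{M}$ meeting each slice in a copy of the orbit, while the projection of $M$ to the transverse factor $\C^{c-c_M}_{s'}$ is a real-analytic $(c-c_M)$-dimensional submanifold $N$, which one checks to be totally real because all complex-tangent directions of $M$ are already absorbed by the orbits. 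A final holomorphic normalization flattening $N$ to $\R^{c-c_M}$ then exhibits $\underline{M}=\underline{M}_0\times\R^{c-c_M}$ with $\underline{M}_0\subset\C^{n+c_M}$ CR-generic of real dimension $2n+c_M$.

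The main obstacle is this last step: promoting the integrable foliation into a genuine holomorphic product decomposition of the ambient space, and verifying that the transverse slice of $M$ is totally real and normalizes to the flat factor $\R^{c-c_M}$. Concretely this requires showing that the induced CR data along the orbits do not vary with the transverse parameter — equivalently that, after the straightening, the graphing functions $\varphi_\bullet$ can be made independent of the $c-c_M$ real coordinates running along $\R^{c-c_M}$ — and it is precisely here that real-analyticity and the constancy of the orbit dimension $2n+c_M$ over $M\backslash\Sigma$ are indispensable.
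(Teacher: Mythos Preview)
The paper does not supply a proof of this theorem: it is stated as ``Known'' with a citation to \cite{Merker-Porten-2006} and is used only as background to justify discarding the case $c_M\leqslant c-1$ as a degeneration. There is therefore no argument in the paper to compare your proposal against.

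That said, your outline follows the standard route to this result and is essentially correct. One point deserves sharpening. You speak of ``the integral manifold through $q$ of the holomorphic distribution generated by $T^{1,0}M$'', but $T^{1,0}M$ is a subbundle of $\C\otimes_\R TM$, not a holomorphic distribution on the ambient $\C^{n+c}$, so as written there is nothing to integrate holomorphically. The honest mechanism is the one you gesture at in your final paragraph: each real-analytic CR orbit $\mathcal{O}_q$, being CR-generic of CR dimension $n$ and real dimension $2n+c_M$, possesses a uniquely determined \emph{intrinsic complexification} $\mathcal{X}_q\subset\C^{n+c}$ of complex dimension $n+c_M$, and it is these $\mathcal{X}_q$ that assemble into a holomorphic foliation of a neighborhood of $q$ in $\C^{n+c}$. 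Real-analyticity is what guarantees both the existence of $\mathcal{X}_q$ and the holomorphic dependence on the transverse parameter; this is exactly the ``main obstacle'' you correctly identify. Once that foliation is in hand, your straightening and the flattening of the totally real transverse slice go through as you describe.
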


The case $c_M \leqslant c - 1$ must hence be considered as {\sl
degeneration}, excluded in classification of initial geometries at
Zariski-generic points.

A well known fact is that, at a Zariski-generic point, a connected
$\mathcal{ C}^\omega$ hypersurface $M^3 \subset \C^2$ is either $\cong
\C \times \R$ or is Levi nondegenerate:
\[
3
=
{\sf rank}_\C
\big(
T^{1,0}M,\,\,
T^{0,1}M,\,\,
\big[T^{1,0}M,\,T^{0,1}M\big]
\big).
\]

\begin{theorem}
{\rm (\cite{ Merker-5-CR})}
Excluding degenerate CR manifolds, there are precisely {\bf six}
general classes of nondegenerate connected
$M^{ 2n +c} \subset \C^{ n+c}$ having dimension:
\[
2n+c
\,\leqslant\,
{\bf 5},
\]
hence having CR dimension $n = 1$ or $n = 2$, namely if:
\[
\big\{
\mathcal{L}
\big\}
\ \ \ \ \ \ \ \ \ \ \ \ \ \
\text{\rm or}
\ \ \ \ \ \ \ \ \ \ \ \ \ \
\big\{
\mathcal{L}_1,\mathcal{L}_2
\big\},
\]
denotes any local frame for $T^{ 1, 0} M$:

\smallskip\noindent$\bullet$\,
{\bf General Class $\text{\sf I}$:} Hypersurfaces
$M^3\subset\C^2$ with $\big\{\mathcal{L},\, \overline{\mathcal{L}},
\,\, \big[\mathcal{L}, \overline{ \mathcal{L}}\big]\big\}$
constituting a frame for $\C \otimes_\R TM$, with:
\[
{\text{\small\sf Model I:}}
\ \ \ \ \ \ \ \ \ 
v
=
z\overline{z},
\]

\smallskip\noindent$\bullet$\,
{\bf General Class $\text{\sf II}$:} CR-generic $M^4\subset\C^3$
with $\big\{ \mathcal{L},\, \overline{ \mathcal{L}},\,\,
\big[\mathcal{L}, \overline{ \mathcal{L}} \big],\,\,
\big[\mathcal{L},\, \big[\mathcal{L}, \overline{\mathcal{L}} \big]\big]
\big\}$ constituting a frame for $\C\otimes_\R TM$, with:
\[
{\text{\small\sf Model II:}}
\ \ \ \ \ \ \ \ \ 
v_1
=
z\overline{z},
\ \ \ \ \ \ \ \ \ 
v_2
=
z^2\overline{z}+z\overline{z}^2,
\]

\smallskip\noindent$\bullet$\,
{\bf General Class $\text{\sf III}_{\text{\sf 1}}$:} CR-generic
$M^5\subset\C^4$ with
$\big\{\mathcal{L},\, \overline{\mathcal{L}},\,\,
\big[\mathcal{L}, \overline{\mathcal{L}}\big],\,\,
\big[\mathcal{L},\,
\big[\mathcal{L}, \overline{\mathcal{L}}\big]\big],\,\,
\big[ \overline{\mathcal{L}}$,
$\big[\mathcal{L}, \overline{\mathcal{L}}\big]\big]
\big\}$ constituting a frame for
$\C\otimes_\R TM$, with:
\[
{\text{\small\sf Model 
$\text{\sf III}_{\text{\sf 1}}$:}}
\ \ \ \ \ \ \ \ \ 
v_1
=
z\overline{z},
\ \ \ \ \ \ \ \ \
v_2
=
z^2\overline{z}+z\overline{z}^2,
\ \ \ \ \ \ \ \ \
v_3
=
{\scriptstyle{\sqrt{-1}}}\,\big(z^2\overline{z}-z\overline{z}^2\big),
\]

\smallskip\noindent$\bullet$\,
{\bf General Class $\text{\sf III}_{\text{\sf 2}}$:} CR generic
$M^5\subset\C^4$ with
$\big\{\mathcal{L},\, \overline{\mathcal{L}},\,\,
\big[\mathcal{L},\overline{\mathcal{L}} \big],\,\,
\big[\mathcal{L},\,
\big[\mathcal{L}, \overline{\mathcal{L}} \big]\big],\,\,
\big[\mathcal{L}$, 
$\big[\mathcal{L},\,
\big[\mathcal{L}, \overline{\mathcal{L}} \big]\big]\big]
\big\}$ constituting a frame for
$\C\otimes_\R TM$,
while
${\bf 4}
=
{\sf rank}_\C
\Big(\mathcal{L},\overline{\mathcal{L}},\,
\big[\mathcal{L},\overline{\mathcal{L}}\big]$,
$\big[\mathcal{L},\,\big[\mathcal{L},\overline{\mathcal{L}}\big]\big],\,\,
\big[\overline{\mathcal{L}},\,
\big[\mathcal{L},\overline{\mathcal{L}}\big]\big]\Big)$, with:
\[
{\text{\small\sf Model
$\text{\sf III}_{\text\sf 2}$:}}
\ \ \ \ \ \ \ \ \ 
v_1
=
z\overline{z},
\ \ \ \ \ \ \ \ \ 
v_2
=
z^2\overline{z}+z\overline{z}^2,
\ \ \ \ \ \ \ \ \ 
v_3
=
2\,z^3\overline{z}+2\,z\overline{z}^3
+
3\,z^2\overline{z}^2,
\]

\smallskip\noindent$\bullet$\,
{\bf General Class $\text{\sf IV}_{\text{\sf 1}}$:} Hypersurfaces
$M^5\subset\C^3$ with
$\big\{\mathcal{L}_1,\, \mathcal{L}_2,\,
\overline{\mathcal{L}}_1,\, \overline{\mathcal{L}}_2,\,\,
\big[\mathcal{L}_1, \overline{\mathcal{L}}_1\big]
\big\}$
constituting a frame for
$\C\otimes_\R TM$, and with the Levi-Form of $M$
being of rank $2$ at every point
$p \in M$, with:
\[
{\text{\small\sf Model(s) 
$\text{\sf IV}_{\text{\sf 1}}$:}}
\ \ \ \ \ \ \ \ \ 
v
=
z_1\overline{z}_1
\pm
z_2\overline{z}_2,
\]

\smallskip\noindent$\bullet$\,
{\bf General Class $\text{\sf IV}_{\text{\sf 2}}$:} Hypersurfaces
$M^5\subset\C^3$ with
$\big\{\mathcal{L}_1,\, \mathcal{L}_2,\,
\overline{\mathcal{L}}_1,\, \overline{\mathcal{L}}_2,\,\,
\big[\mathcal{L}_1, \overline{\mathcal{L}}_1\big]
\big\}$ constituting a frame for
$\C\otimes_\R TM$, with the Levi-Form
being of rank $1$ at every point $p\in M$
while the Freeman-Form (defined below)
is nondegenerate at every point, with:
\[
{\text{\small\sf Model 
$\text{\sf IV}_{\text{\sf 2}}$:}}
\ \ \ \ \ \ \ \ \ 
v
=
\frac{z_1\overline{z}_1+
\frac{1}{2}\,z_1z_1\overline{z}_2
+
\frac{1}{2}\,z_2\overline{z}_1\overline{z}_1}{
1-z_2\overline{z}_2}.
\]
\end{theorem}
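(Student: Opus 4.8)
The plan is to classify by first enumerating the admissible pairs $(n,c)$ with $2n+c \leqslant 5$ — namely $(1,1)$, $(1,2)$, $(1,3)$ and $(2,1)$ — and then, for each pair, to read off the finitely many nondegenerate types from the growth of the flag of iterated Lie brackets of a local frame $\{\mathcal{L}_1,\dots,\mathcal{L}_n\}$ for $T^{1,0}M$ together with its conjugate. Since $T^{1,0}M$ and $T^{0,1}M$ are Frobenius-involutive, the only brackets that can enlarge the flag are the mixed ones, so the successive spaces $\mathbb{L}_{\mathcal{L},\overline{\mathcal{L}}}^1 \subset \mathbb{L}_{\mathcal{L},\overline{\mathcal{L}}}^2 \subset \cdots$ are governed entirely by the Levi bracket $[\mathcal{L}_i,\overline{\mathcal{L}}_j]$ and its iterates. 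Excluding degenerations amounts to imposing $c_M = c$, so that at a Zariski-generic point the flag must eventually fill the whole $\C \otimes_\R TM$ of complex dimension $2n+c$; the admissible growth patterns of a flag reaching $2n+c \leqslant 5$ from initial rank $2n$ are then a short finite list, and each pattern will correspond to one general class.

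First I would treat CR dimension $n = 1$, where a single generator $\mathcal{L}$ builds the flag from $\mathcal{L}, \overline{\mathcal{L}}$, the Levi bracket $[\mathcal{L},\overline{\mathcal{L}}]$, and its iterates $[\mathcal{L},[\mathcal{L},\overline{\mathcal{L}}]]$, $[\overline{\mathcal{L}},[\mathcal{L},\overline{\mathcal{L}}]]$, $[\mathcal{L},[\mathcal{L},[\mathcal{L},\overline{\mathcal{L}}]]]$. For $c = 1$ the flag must reach rank $3$, which forces Levi nondegeneracy and yields General Class $\text{\sf I}$ with model $v = z\overline{z}$. For $c = 2$ the flag reaches rank $4$: after $[\mathcal{L},\overline{\mathcal{L}}]$ produces rank $3$, exactly one third-order bracket $[\mathcal{L},[\mathcal{L},\overline{\mathcal{L}}]]$ supplies the fourth direction, giving General Class $\text{\sf II}$. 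For $c = 3$ the flag reaches rank $5$ in precisely two ways, distinguished by the growth vector: either $[\mathcal{L},[\mathcal{L},\overline{\mathcal{L}}]]$ and $[\overline{\mathcal{L}},[\mathcal{L},\overline{\mathcal{L}}]]$ are simultaneously independent, producing the balanced growth $(2,3,5)$ of General Class $\text{\sf III}_{\text{\sf 1}}$; or these two third-order brackets span only one new direction, giving rank $4$ at third order, and a fourth-order bracket $[\mathcal{L},[\mathcal{L},[\mathcal{L},\overline{\mathcal{L}}]]]$ is needed to reach rank $5$, producing the deep growth $(2,3,4,5)$ of General Class $\text{\sf III}_{\text{\sf 2}}$. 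In each case one exhibits the listed polynomial model and checks, using the rational frame of Lemma~\ref{L-explicit}, that it realizes the prescribed flag.

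Next I would treat $n = 2$, $c = 1$, i.e. hypersurfaces $M^5 \subset \C^3$, where $\mathcal{L}_1, \mathcal{L}_2, \overline{\mathcal{L}}_1, \overline{\mathcal{L}}_2$ are already independent and a single extra real direction, supplied by $[\mathcal{L}_1,\overline{\mathcal{L}}_1]$, brings the rank to $5$. Here the classifying invariant is not the length of the flag but the rank of the Hermitian Levi form: if it has rank $2$ everywhere one obtains the Levi-nondegenerate General Class $\text{\sf IV}_{\text{\sf 1}}$, the two signatures giving the models $v = z_1\overline{z}_1 \pm z_2\overline{z}_2$; if it drops to rank $1$ one passes to the second-order invariant, the Freeman form on the kernel of the Levi form, whose nondegeneracy characterizes the $2$-nondegenerate General Class $\text{\sf IV}_{\text{\sf 2}}$ with its rational model. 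Throughout, biholomorphic invariance of each class follows from the transfer lemma giving $h_*(\mathcal{L}_i) = \sum_j a'_{ji}\,\mathcal{L}_j'$ with invertible $(a'_{ji})$: the whole flag of iterated brackets then transforms by a block-triangular change-of-frame matrix, exactly as in the rank-$3$ computation of Lemma~\ref{3-rank}, so the growth vector, the Levi rank and the Freeman rank are all invariants.

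The main obstacle is twofold. First one must establish \emph{exhaustiveness and mutual exclusivity} — that at Zariski-generic points no growth pattern other than the listed ones survives, and that the nondegeneracy conditions defining neighbouring classes cannot hold simultaneously; this is essentially the finite combinatorial bookkeeping above, made rigorous by the constant-rank stratification that produced $c_M$. The genuinely delicate step, and the one I expect to be hardest, is the Levi-degenerate case $\text{\sf IV}_{\text{\sf 2}}$: one must show that once the Levi form has generic constant rank $1$ the Freeman form is generically nondegenerate — any further drop being a degeneration excluded from the classification — and then normalize the resulting $2$-nondegenerate structure to the displayed rational model, which is where the explicit rational computations of the style of Theorem~\ref{explicit-sphericity} and Lemma~\ref{L-explicit} become indispensable.
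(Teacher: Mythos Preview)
Your outline is correct and matches the paper's approach: enumerate the admissible $(n,c)$, use the growth of the iterated-bracket flag for $n=1$, and use the Levi rank together with the Freeman form for $n=2$, $c=1$. The paper in fact only spells out the $\text{\sf IV}_{\text{\sf 1}}$/$\text{\sf IV}_{\text{\sf 2}}$ bifurcation, deferring the rest to the cited reference.

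One point of difference worth flagging concerns the mechanism in the $\text{\sf IV}_{\text{\sf 2}}$ case. You frame the delicate step as showing that the Freeman form is Zariski-generically nondegenerate and then normalizing to the model. The paper proceeds differently: it proves the \emph{contrapositive} degeneration statement, namely that identical vanishing of the Freeman form forces $M^5 \cong M^3 \times \C$. The argument is not a normalization but an integrability trick: from $[\mathcal{K},\overline{\mathcal{L}}_1] \equiv 0 \bmod (\mathcal{K},\overline{\mathcal{K}},\overline{\mathcal{L}}_1)$ together with the automatic $[\mathcal{K},\overline{\mathcal{K}}]$-involutiveness one deduces $\overline{\mathcal{L}}_1(k) = \overline{\mathcal{K}}(k) = 0$, so the slanting function $k$ is CR; a further rational identity (the ``surprising computational fact'' $k = -\mathcal{L}_2(\overline{A}_1)/\mathcal{L}_1(\overline{A}_1) = -\overline{\mathcal{L}}_1(A_2)/\overline{\mathcal{L}}_1(A_1)$) shows the remaining coefficient $kA_1+A_2$ of $\mathcal{K}$ is also CR, whence $\mathcal{K}$ extends holomorphically and can be straightened to $\partial/\partial z_2$. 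So the explicit rational computations you anticipate do enter, but as the tool that makes $\mathcal{K}$ holomorphically straightenable rather than as a direct normal-form reduction. The existence of the listed models and their normal-form representations is handled separately in the subsequent Proposition.
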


The models $\text{\sf II}$,
$\text{\sf III}_{\text{\sf 1}}$ appear in the works of Beloshapka
(\cite{Beloshapka-1998, Beloshapka-2002}) which exhibit a wealth
of higher dimensional models widening the biholomorphic
equivalence problem.
Before proceeding, explain (only) how the General Classes
$\text{\sf IV}_{\text{\sf 1}}$ and
$\text{\sf IV}_{\text{\sf 2}}$ occur.

\subsection{Concept of Freeman form}
If $M^5 \subset \C^3$ is connected
$\mathcal{ C}^\omega$ and belongs to Class 
$\text{\sf IV}_{\text{\sf 2}}$, so that:
\[
1
=
{\sf rank}_\C\big(
{\sf Levi}\text{-}{\sf Form}^M(p)
\big)
\ \ \ \ \ \ \ \ \ \ \ \ \
{\scriptstyle{(\forall\,p\,\in\,M)}},
\] 
then there exists a unique rank $1$ complex vector subbundle:
\[
K^{1,0}M
\subset
T^{1,0}M
\]
such that, at every point $p \in M$:
\[
K_p^{1,0}M
\,\ni\,
\mathcal{K}_p
\,\,\Longleftrightarrow\,\,
\mathcal{K}_p
\,\in\,
{\sf Kernel}
\big(
{\sf Levi}\text{-}{\sf Form}^M(p)
\big).
\]
Local trivializations of $K^{ 1, 0} M$ match up on intersections of balls
(\cite{ Merker-5-CR}).

\begin{center}
\begin{picture}(0,0)%
\includegraphics{3-balls.pstex}%
\end{picture}%
\setlength{\unitlength}{4144sp}%
\begingroup\makeatletter\ifx\SetFigFont\undefined%
\gdef\SetFigFont#1#2#3#4#5{%
  \reset@font\fontsize{#1}{#2pt}%
  \fontfamily{#3}\fontseries{#4}\fontshape{#5}%
  \selectfont}%
\fi\endgroup%
\begin{picture}(4212,1357)(1181,-2510)
\put(1196,-1300){\makebox(0,0)[lb]{\smash{{\SetFigFont{10}{12.0}{\familydefault}{\mddefault}{\updefault}{\color[rgb]{0,0,0}$\C^{n+1}$}%
}}}}
\put(5242,-2273){\makebox(0,0)[lb]{\smash{{\SetFigFont{10}{12.0}{\familydefault}{\mddefault}{\updefault}{\color[rgb]{0,0,0}$M$}%
}}}}
\put(2867,-1426){\makebox(0,0)[lb]{\smash{{\SetFigFont{10}{12.0}{\familydefault}{\mddefault}{\updefault}{\color[rgb]{0,0,0}${\sf U}_p$}%
}}}}
\put(3665,-2036){\makebox(0,0)[lb]{\smash{{\SetFigFont{10}{12.0}{\familydefault}{\mddefault}{\updefault}{\color[rgb]{0,0,0}$p'$}%
}}}}
\put(2918,-2026){\makebox(0,0)[lb]{\smash{{\SetFigFont{10}{12.0}{\familydefault}{\mddefault}{\updefault}{\color[rgb]{0,0,0}$p$}%
}}}}
\put(3293,-2126){\makebox(0,0)[lb]{\smash{{\SetFigFont{10}{12.0}{\familydefault}{\mddefault}{\updefault}{\color[rgb]{0,0,0}$p_0$}%
}}}}
\put(3618,-1423){\makebox(0,0)[lb]{\smash{{\SetFigFont{10}{12.0}{\familydefault}{\mddefault}{\updefault}{\color[rgb]{0,0,0}${\sf U}_{p'}$}%
}}}}
\put(1227,-2259){\makebox(0,0)[lb]{\smash{{\SetFigFont{10}{12.0}{\familydefault}{\mddefault}{\updefault}{\color[rgb]{0,0,0}$M$}%
}}}}
\put(4827,-1468){\makebox(0,0)[lb]{\smash{{\SetFigFont{10}{12.0}{\familydefault}{\mddefault}{\updefault}{\color[rgb]{0,0,0}$\mathcal{ K} = a'\,\mathcal{K}'$}%
}}}}
\end{picture}%

\end{center}

Furthermore, three known involutiveness conditions hold
(\cite{ Merker-5-CR}):
\begin{equation}
\label{K-involutiveness}
\aligned
\big[K^{1,0}M,\,K^{1,0}M\big]
&
\,\subset\,
K^{1,0}M,
\\
\big[K^{0,1}M,\,K^{0,1}M\big]
&
\,\subset\,
K^{0,1}M,
\\
\big[K^{1,0}M,\,K^{0,1}M\big]
&
\,\subset\,
K^{1,0}M
\oplus
K^{0,1}M.
\endaligned
\end{equation}

In local coordinates, $M^5 \subset \C^3$ is graphed as:
\[
v
=
\varphi\big(x_1,x_2,y_1,y_2,u\big),
\]
and two local generators of $T^{1, 0}M$ with their conjugates are:
\[
\aligned
\mathcal{L}_1
&
=
\frac{\partial}{\partial z_1}
+
A_1\,\frac{\partial}{\partial u},
\ \ \ \ \ \ \ \ \ \ \ \ \ \ \ \ \ \ \ \ \ \ \ \ 
\overline{\mathcal{L}}_1
=
\frac{\partial}{\partial\overline{z}_1}
+
\overline{A_1}\,\frac{\partial}{\partial u},
\\
\mathcal{L}_2
&
=
\frac{\partial}{\partial z_2}
+
A_2\,\frac{\partial}{\partial u},
\ \ \ \ \ \ \ \ \ \ \ \ \ \ \ \ \ \ \ \ \ \ \ \ 
\overline{\mathcal{L}}_2
=
\frac{\partial}{\partial\overline{z}_2}
+
\overline{A_2}\,\frac{\partial}{\partial u},
\endaligned
\]
with:
\[
\aligned
A_1
:=
-\,
\frac{\varphi_{z_1}}{{\scriptstyle{\sqrt{-1}}}+\varphi_u},
\\
A_2
:=
-\,
\frac{\varphi_{z_2}}{{\scriptstyle{\sqrt{-1}}}+\varphi_u}.
\endaligned
\]
Taking as a $1$-form $\rho_0$ with $\{ \rho_0 = 0\} = T^{1, 0} M 
\oplus T^{0, 1}M$:
\[
\rho_0
:=
-\,A_1\,dz_1
-
A_2\,dz_2
-
\overline{A_1}\,d\overline{z}_1
-
\overline{A_2}\,d\overline{z}_2
+
du,
\]
the top-left entry of the:
\begin{equation}
\label{levi-matrix}
\aligned
{\sf Levi}\text{-}{\sf Matrix}
&
=
\left(\!
\begin{array}{cc}
\rho_0\big({\scriptstyle{\sqrt{-1}}}\big[\mathcal{L}_1,\overline{\mathcal{L}}_1\big]\big)
&
\rho_0\big({\scriptstyle{\sqrt{-1}}}\big[\mathcal{L}_2,\overline{\mathcal{L}}_1\big]\big)
\\
\rho_0\big({\scriptstyle{\sqrt{-1}}}\big[\mathcal{L}_1,\overline{\mathcal{L}}_2\big]\big)
&
\rho_0\big({\scriptstyle{\sqrt{-1}}}\big[\mathcal{L}_2,\overline{\mathcal{L}}_2\big]\big)
\end{array}
\!\right)
\\
&
=
\left(\!
\begin{array}{cc}
{\scriptstyle{\sqrt{-1}}}\big(
\mathcal{L}_1\big(\overline{A_1}\big)
-
\overline{\mathcal{L}}_1\big(A_1\big)
\big)
&
{\scriptstyle{\sqrt{-1}}}\big(
\mathcal{L}_2\big(\overline{A_1}\big)
-
\overline{\mathcal{L}}_1\big(A_2\big)
\big)
\\
{\scriptstyle{\sqrt{-1}}}\big(
\mathcal{L}_1\big(\overline{A_2}\big)
-
\overline{\mathcal{L}}_2\big(A_1\big)
\big)
&
{\scriptstyle{\sqrt{-1}}}\big(
\mathcal{L}_2\big(\overline{A_2}\big)
-
\overline{\mathcal{L}}_2\big(A_2\big)
\big)
\end{array}
\!\right),
\endaligned
\end{equation}
expresses as:
\[
\aligned
{\scriptstyle{\sqrt{-1}}}\,
\big(\mathcal{L}_1(\overline{A_1}\big)
-
\overline{\mathcal{L}}_1\big(A_1\big)\big)
&
=
\frac{1}{({\scriptstyle{\sqrt{-1}}}+\varphi_u)^2(-{\scriptstyle{\sqrt{-1}}}+\varphi_u)^2}
\bigg\{
2\,\varphi_{z_1\overline{z}_1}
+
2\,\varphi_{z_1\overline{z}_1}\varphi_u\varphi_u
-
\\
&
\ \ \ \ \
-\,
2{\scriptstyle{\sqrt{-1}}}\,\varphi_{\overline{z}_1}\varphi_{z_1u}
-
2\,\varphi_{\overline{z}_1}\varphi_{z_1u}\varphi_u
+
2{\scriptstyle{\sqrt{-1}}}\,\varphi_{z_1}\varphi_{\overline{z}_1u}
+
\\
&
\ \ \ \ \ \
+
2\,\varphi_{z_1}\varphi_{\overline{z}_1}\varphi_{uu}
-
2\,\varphi_{z_1}\varphi_{\overline{z}_1u}\varphi_u
\bigg\},
\endaligned
\]
with quite similar expressions for the remaining three entries.
As $M$ belongs to Class $\text{\sf IV}_{\text{\sf 2}}$:
\[
\aligned
0
&
\equiv
\det\,
\left(\!
\begin{array}{cc}
{\scriptstyle{\sqrt{-1}}}
\big(
\overline{A_1}_{z_1}
+
A_1\,\overline{A_1}_u
-
{A_1}_{\overline{z_1}}
-
\overline{A_1}\,
{A_1}_u
\big)
&
{\scriptstyle{\sqrt{-1}}}
\big(
\overline{A_1}_{z_2}
+
A_2\,\overline{A_1}_u
-
{A_2}_{\overline{z_1}}
-
\overline{A_1}\,
{A_2}_u
\big)
\\
{\scriptstyle{\sqrt{-1}}}
\big(
\overline{A_2}_{z_1}
+
A_1\,\overline{A_2}_u
-
{A_1}_{\overline{z_2}}
-
\overline{A_2}\,
{A_1}_u
\big)
&
{\scriptstyle{\sqrt{-1}}}
\big(
\overline{A_2}_{z_2}
+
A_2\,\overline{A_2}_u
-
{A_2}_{\overline{z_2}}
-
\overline{A_2}\,
{A_2}_u
\big)
\end{array}
\!\right),
\endaligned
\]
that is to say in terms of $\varphi$:
\begin{equation}
\label{Levi-determinant}
\aligned
0
&
\equiv
\frac{4}{({\scriptstyle{\sqrt{-1}}}+\varphi_u)^3(-{\scriptstyle{\sqrt{-1}}}+\varphi_u)^3}
\bigg\{
\varphi_{z_2\overline{z}_2}\varphi_{z_1\overline{z}_1}
-
\varphi_{z_2\overline{z}_1}\varphi_{z_1\overline{z}_2}
+
\\
&
\ \ \ \ \ 
+
\varphi_{z_2\overline{z}_1}\varphi_{\overline{z}_2}\varphi_{z_1u}
\varphi_u
-
\varphi_{z_2\overline{z}_1}\varphi_{\overline{z}_2}\varphi_{z_1}
\varphi_{uu}
-
\varphi_{\overline{z}_1}\varphi_{z_2u}\varphi_{z_1}
\varphi_{\overline{z}_2u}
+
\varphi_{\overline{z}_1}\varphi_{z_2u}\varphi_u
\varphi_{z_1\overline{z}_2}
-
\\
&
\ \ \ \ \ 
-\,
\varphi_{z_2}\varphi_{\overline{z}_1u}\varphi_{\overline{z}_2}
\varphi_{z_1u}
-
\varphi_{z_2}\varphi_{\overline{z}_1}\varphi_{uu}
\varphi_{z_1\overline{z}_2}
+
\varphi_{z_2}\varphi_{\overline{z}_1u}
\varphi_u\varphi_{z_1\overline{z}_2}
-
\varphi_{z_2\overline{z}_2}\varphi_{\overline{z}_1}
\varphi_{z_1u}\varphi_u
+
\\
&
\ \ \ \ \ 
+
\varphi_{z_2\overline{z}_2}\varphi_{z_1}
\varphi_{\overline{z}_1}\varphi_{uu}
-
\varphi_{z_2\overline{z}_2}\varphi_{z_1}
\varphi_{\overline{z}_1u}\varphi_u
+
\varphi_{z_2\overline{z}_1}\varphi_{z_1}
\varphi_{\overline{z}_2u}\varphi_u
+
\varphi_{z_2}\varphi_{\overline{z}_2u}
\varphi_{\overline{z}_1}\varphi_{z_1u}
-
\\
&
\ \ \ \ \ 
-\,
\varphi_{z_2}\varphi_{\overline{z}_2u}
\varphi_{z_1\overline{z}_1}\varphi_u
+
\varphi_{\overline{z}_2}\varphi_{z_2u}
\varphi_{z_1}\varphi_{\overline{z}_1u}
-
\varphi_{\overline{z}_2}\varphi_{z_2u}
\varphi_u\varphi_{z_1\overline{z}_1}
+
\varphi_{\overline{z}_2}\varphi_{z_2}
\varphi_{uu}\varphi_{z_1\overline{z}_1}
+
\\
&
\ \ \ \ \ 
+
{\scriptstyle{\sqrt{-1}}}
\big(
\varphi_{z_2\overline{z}_2}\varphi_{z_1}
\varphi_{\overline{z}_1u}
+
\varphi_{\overline{z}_1}\varphi_{z_2u}
\varphi_{z_1\overline{z}_2}
+
\varphi_{z_2\overline{z}_1}\varphi_{\overline{z}_2}
\varphi_{z_1u}
+
\varphi_{z_2}\varphi_{\overline{z}_2u}
\varphi_{z_1\overline{z}_1}
\big)
-
\\
&
\ \ \ \ \ 
-\,{\scriptstyle{\sqrt{-1}}}\big(
\varphi_{\overline{z}_2}\varphi_{z_2u}
\varphi_{z_1\overline{z}_1}
+
\varphi_{z_2\overline{z}_1}\varphi_{z_1}
\varphi_{\overline{z}_2u}
+
\varphi_{z_2}\varphi_{\overline{z}_1u}
\varphi_{z_1\overline{z}_2}
+
\varphi_{z_2\overline{z}_2}\varphi_{\overline{z}_1}
\varphi_{z_1u}
\big)
-
\\
&
\ \ \ \ \ 
-\,
\varphi_{z_2\overline{z}_1}\varphi_{z_1\overline{z}_2}
\varphi_u\varphi_u
+
\varphi_{z_2\overline{z}_2}\varphi_{z_1\overline{z}_1}
\varphi_u\varphi_u
\bigg\}.\,\,
\endaligned
\end{equation}
Since the rank of the Levi Matrix~\thetag{
\ref{levi-matrix}} equals $1$ everywhere, after permutation,
its top-left entry vanishes nowhere. Hence a local 
generator for $K^{1, 0} M$ is:
\[
\mathcal{K}
=
k\,\mathcal{L}_1
+
\mathcal{L}_2,
\]
with:
\[
k
=
-\,
\frac{
\mathcal{L}_2\big(\overline{A_1}\big)
-
\overline{\mathcal{L}}_1\big(A_2\big)
}{
\mathcal{L}_1\big(\overline{A_1}\big)
-
\overline{\mathcal{L}}_1\big(A_1\big)
},
\]
namely:
\[
\!\!\!\!\!\!\!\!\!\!\!\!\!\!\!\!\!\!
\aligned
k
&
=
\frac{\varphi_{z_2\overline{z}_1}
+
\varphi_{z_2\overline{z}_1}\varphi_u\varphi_u
-
{\scriptstyle{\sqrt{-1}}}\varphi_{\overline{z}_1}\varphi_{z_2u}
-
\varphi_{\overline{z}_1}\varphi_{z_2u}\varphi_u
+
{\scriptstyle{\sqrt{-1}}}\varphi_{z_2}\varphi_{\overline{z}_1u}
+
\varphi_{z_2}\varphi_{\overline{z}_1}\varphi_{uu}
-
\varphi_{z_2}\varphi_{\overline{z}_1u}\varphi_u}{
-\,\varphi_{z_1\overline{z}_1}
-
\varphi_{z_1\overline{z}_1}\varphi_u\varphi_u
+
{\scriptstyle{\sqrt{-1}}}\varphi_{\overline{z}_1}\varphi_{z_1u}
+
\varphi_{\overline{z}_1}\varphi_{z_1u}\varphi_u
-
{\scriptstyle{\sqrt{-1}}}\varphi_{z_1}\varphi_{\overline{z}_1u}
-
\varphi_{z_1}\varphi_{\overline{z}_1}\varphi_{uu}
+
\varphi_{z_1}\varphi_{\overline{z}_1u}\varphi_u
},
\endaligned
\]
and there is a surprising computational fact that this function $k$
happens to be also equal to the other two quotients
(\cite{ Merker-5-CR}, II, p.~82):
\begin{equation}
\label{surprising-computational-fact}
k
=
-\,
\frac{\mathcal{L}_2\big(\overline{A_1}\big)}{
\mathcal{L}_1\big(\overline{A_1}\big)}
=
-\,
\frac{-\,\overline{\mathcal{L}}_1\big(A_2\big)}{
-\,\overline{\mathcal{L}}_1\big(A_1\big)}.
\end{equation}
Heuristically, this fact becomes transparent when $\varphi = \varphi(x_1, 
x_2, y_1, y_2)$ is independent of $u$, whence the Levi matrix becomes:
\[
\left(\!
\begin{array}{cc}
2\,\varphi_{z_1\overline{z}_1} 
&
2\,\varphi_{z_2\overline{z}_1}
\\
2\,\varphi_{z_1\overline{z}_2}
&
2\,\varphi_{z_2\overline{z}_2}
\end{array}
\!\right),
\]
and clearly:
\[
k
=
-\,\frac{2\,\varphi_{z_2\overline{z}_1}}{
2\,\varphi_{z_1\overline{z}_1}}
=
-\,\frac{\varphi_{z_2\overline{z}_1}}{
\varphi_{z_1\overline{z}_1}}
=
-\,\frac{\mathcal{L}_2(\varphi_{\overline{z}_1})}{
\mathcal{L}_1(\varphi_{\overline{z}_1})}
=
-\,
\frac{-\,\overline{\mathcal{L}}_1(\varphi_{z_2})}{
-\,\overline{\mathcal{L}}_1(\varphi_{z_1})}.
\]

\begin{proposition}
\label{L1-K-kappa-0}
{\rm (\cite{ Merker-5-CR})}
In any system of holomorphic coordinates, 
for any choice of Levi-kernel adapted local $T^{1,0}M$-frame
$\big\{\mathcal{L}_1,\mathcal{K}\big\}$
satisfying:
\[
K^{1,0}M
=
\C\,\mathcal{K},
\]
and for any choice of differential $1$-forms
$\big\{
\rho_0,\kappa_0,\zeta_0
\big\}$
satisfying:
\[
\aligned
\big\{0=\rho_0\big\}
&
=
T^{1,0}M\oplus T^{0,1}M,
\\
\big\{0=\rho_0=\kappa_0=\overline{\kappa}_0=\overline{\zeta}_0\big\}
&
=
K^{1,0}M,
\endaligned
\]
the quantity:
\[
\kappa_0
\big(
\big[\mathcal{K},\,\overline{\mathcal{L}}_1\big]
\big),
\]
is, at one fixed point $p \in M$, either zero
or nonzero, independently of any choice.
\end{proposition}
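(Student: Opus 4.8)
The plan is to recognize the number $\kappa_0\big(\big[\mathcal{K},\overline{\mathcal{L}}_1\big]\big)$ as a nonvanishing multiple of the \emph{Freeman form} evaluated on $(\mathcal{K},\mathcal{L}_1)$, and then to prove that the nullity of that form at $p$ is intrinsic. First I would observe that, because $\mathcal{K}$ generates the Levi kernel $K^{1,0}M$, one has $\rho_0\big({\scriptstyle{\sqrt{-1}}}\,\big[\mathcal{K},\overline{\mathcal{L}}_1\big]\big)=0$, so the bracket carries no transversal component and lies in $T^{1,0}M\oplus T^{0,1}M$. Writing $\big[\mathcal{K},\overline{\mathcal{L}}_1\big]=\alpha\,\mathcal{L}_1+\beta\,\mathcal{K}+\gamma\,\overline{\mathcal{L}}_1+\delta\,\overline{\mathcal{K}}$ and using that $\kappa_0$ kills $\mathcal{K}$ and $\overline{\mathcal{K}}$ by the stated kernel conditions, while as the $(1,0)$-form complementary to $\rho_0$ it also kills $T^{0,1}M$ and satisfies $\kappa_0(\mathcal{L}_1)\neq 0$, I would reduce the assertion to the claim that the coefficient $\alpha$, read modulo $K^{1,0}M\oplus T^{0,1}M$, vanishes or not independently of all choices.

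Next I would confirm this reduction by a direct computation in the graphing frame $\mathcal{L}_1=\partial_{z_1}+A_1\,\partial_u$, $\mathcal{L}_2=\partial_{z_2}+A_2\,\partial_u$, with $\mathcal{K}=k\,\mathcal{L}_1+\mathcal{L}_2$. Since $\partial_{z_i}$ and $\partial_{\overline z_j}$ commute, each bracket $\big[\mathcal{L}_i,\overline{\mathcal{L}}_j\big]$ is a function multiple of $\partial_u$; expanding $\big[\mathcal{K},\overline{\mathcal{L}}_1\big]=k\,\big[\mathcal{L}_1,\overline{\mathcal{L}}_1\big]+\big[\mathcal{L}_2,\overline{\mathcal{L}}_1\big]-\overline{\mathcal{L}}_1(k)\,\mathcal{L}_1$, its $\partial_u$-coefficient reproduces the Levi form of $\mathcal{K}$ against $\mathcal{L}_1$, which is $0$ because $\mathcal{K}$ lies in the kernel of the rank-one Levi form. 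Hence in this frame $\big[\mathcal{K},\overline{\mathcal{L}}_1\big]=-\,\overline{\mathcal{L}}_1(k)\,\mathcal{L}_1$ is a pure multiple of $\mathcal{L}_1$, so that $\beta=\gamma=\delta=0$, $\alpha=-\overline{\mathcal{L}}_1(k)$, and $\kappa_0\big(\big[\mathcal{K},\overline{\mathcal{L}}_1\big]\big)=-\overline{\mathcal{L}}_1(k)\,\kappa_0(\mathcal{L}_1)$ with $\kappa_0(\mathcal{L}_1)\neq 0$. This exhibits at least one frame in which the quantity is manifestly a nonvanishing multiple of the invariant under study.

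Finally, to get independence of \emph{all} choices I would introduce the Freeman form $\mathcal{F}(\mathcal{K},\mathcal{L}_1):=\big[\mathcal{K},\overline{\mathcal{L}}_1\big]\ \mathrm{mod}\ \big(K^{1,0}M\oplus T^{0,1}M\big)\in T^{1,0}M/K^{1,0}M$ and check its tensoriality: replacing $\mathcal{K}$ by $f\mathcal{K}$ adds $-\overline{\mathcal{L}}_1(f)\,\mathcal{K}\in K^{1,0}M$, and replacing $\mathcal{L}_1$ by $g\mathcal{L}_1$ adds $\mathcal{K}(\overline g)\,\overline{\mathcal{L}}_1\in T^{0,1}M$, both annihilated in the quotient, so $\mathcal{F}$ is $\C$-linear in $\mathcal{K}$ and conjugate-linear in $\mathcal{L}_1$. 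The one genuinely delicate point — the main obstacle — is that $\mathcal{F}$ must descend to $K^{1,0}M\times\big(T^{1,0}M/K^{1,0}M\big)$, that is, $\mathcal{F}(\mathcal{K},\mathcal{K})$ must vanish; this is exactly where the third involutiveness relation of~\thetag{\ref{K-involutiveness}} enters, since $\big[\mathcal{K},\overline{\mathcal{K}}\big]\in K^{1,0}M\oplus K^{0,1}M\subset K^{1,0}M\oplus T^{0,1}M$. Granting this descent, every change of the frame $\big\{\mathcal{L}_1,\mathcal{K}\big\}$ multiplies $\mathcal{F}(\mathcal{K},\mathcal{L}_1)$, hence $\alpha$, by a nonvanishing factor, and every admissible change of $\rho_0,\kappa_0,\zeta_0$ multiplies $\kappa_0\big(\big[\mathcal{K},\overline{\mathcal{L}}_1\big]\big)$ by a nonvanishing factor; therefore the alternative $\kappa_0\big(\big[\mathcal{K},\overline{\mathcal{L}}_1\big]\big)(p)=0$ versus $\neq 0$ is the same for every choice, which is precisely the claim.
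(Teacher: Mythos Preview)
Your proof is correct and follows precisely the approach the paper itself adopts: the paper does not supply an independent argument for this proposition (it cites \cite{Merker-5-CR}), but immediately after the statement it \emph{defines} the Freeman form as the class of $[\mathcal{K},\overline{\mathcal{L}}_1]$ modulo $K^{1,0}M\oplus T^{0,1}M$ and notes its independence of vector-field extensions, which is exactly the tensoriality and descent you verify in detail; your explicit computation $[\mathcal{K},\overline{\mathcal{L}}_1]=-\overline{\mathcal{L}}_1(k)\,\mathcal{L}_1$ in the graphing frame also reappears verbatim in the paper's subsequent proof of the product-structure proposition.

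One small remark: you invoke that $\kappa_0$ is a $(1,0)$-form killing $T^{0,1}M$ with $\kappa_0(\mathcal{L}_1)\neq 0$. This is indeed the intended setup (and is how the paper uses $\kappa_0$ as part of the dual coframe), but it is not forced by the two kernel conditions \emph{alone} as literally written in the statement; one can cook up a $\kappa_0$ satisfying those kernel conditions yet with $\kappa_0(\mathcal{L}_1)=0$ (e.g.\ a nonzero multiple of $\overline{\kappa}_0$), which would make the displayed quantity vanish identically. So your added type assumption is both necessary and standard; it is worth flagging explicitly that the proposition tacitly intends $\kappa_0$ to be of type $(1,0)$ (equivalently, part of a coframe dual to $\{\mathcal{T},\overline{\mathcal{L}}_1,\overline{\mathcal{K}},\mathcal{L}_1,\mathcal{K}\}$).
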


\begin{definition}
The {\sl Freeman form} at a point $p \in M$ is the value of $\kappa_0
\big( \big[\mathcal{K},\,\overline{\mathcal{L}}_1\big] \big) (p)$, and
it depends only on $\kappa_0(p)$, $\mathcal{ K} \big\vert_p$,
$\overline{ \mathcal{ L}}_1 \big\vert_p$.
\end{definition}

With a $T^{1, 0}M$-frame $\{ \mathcal{ K}, \mathcal{ L}_1 \}$ 
satisfying $K^{1, 0} M = \C \mathcal{ K}$, define quite equivalently:
\[
\aligned
{\sf Freeman}\text{-}{\sf Form}^M(p)
\colon
\left[
\aligned
K_p^{1,0}M
\times
\big(
T_p^{1,0}M\!\!\!\!
\mod\,K_p^{1,0}M
\big)
&
\,\longrightarrow\,
\C
\\
\big(\mathcal{K}_p,\mathcal{L}_{1p}\big)
&
\,\longmapsto\,
\big[\mathcal{K},\,\overline{\mathcal{L}}_1\big](p)
\\
&
\ \ \ \ \ \ \
\mod\,\,
\big(
K^{1,0}M
\oplus
T^{0,1}M
\big),
\endaligned\right.
\endaligned
\]
the result being independent of vector field extensions $\mathcal{
K}\big\vert_p = \mathcal{ K}_p$ and $\mathcal{ L}_1 \big\vert_p =
\mathcal{ L}_{ 1p}$.

\medskip\noindent{\bf General Classes $\text{\sf IV}_{\text{\sf 1}}$,
$\text{\sf IV}_{\text{\sf 2}}$.} 
For a connected $\mathcal{ C}^\omega$ hypersurface $M^5 \subset \C^3$,
if the Levi form is of rank $2$ at one point, it is
of rank $2$ at every
Zariski-generic point. Excluding Levi degenerate points, this brings
$\text{\sf IV}_{\text{\sf 1}}$.

\smallskip

If the Levi form is identically zero, then as is known $M \cong \C^2
\times \R$.

\smallskip

If the Levi form is of rank $1$, the Freeman form creates
bifurcation:

\begin{proposition}
\text{\rm (\cite{ Merker-5-CR})}
A $\mathcal{ C}^\omega$ hypersurface $M^5
\subset
\C^3$ 
having at every point $p$:
\[
{\sf rank}_\C
\big(
{\sf Levi}\text{-}{\sf Form}^M(p)
\big)
=
{\bf 1}
\]
has an identically vanishing:
\[
{\sf Freeman}\text{-}{\sf Form}^M(p)
\equiv
{\bf 0},
\]
if and only if it is locally biholomorphic to a product:
\[
M^5
\,\cong\,
M^3
\times
\C
\]
with a $\mathcal{ C}^\omega$ hypersurface $M^3 \subset \C^2$.
\end{proposition}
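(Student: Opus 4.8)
The plan is to prove the stated equivalence by treating its two implications separately, the converse being a one-line verification while the forward implication (Freeman vanishing $\Rightarrow$ product) carries all the content.

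For the converse, I would take $M^5 \cong M^3 \times \C$ with $M^3 \subset \C^2_{(z_1,w)}$ graphed by $v = \psi(x_1,y_1,u)$ and with the $\C$-factor the $z_2$-axis. Then $\varphi(x_1,x_2,y_1,y_2,u) = \psi(x_1,y_1,u)$ is independent of $(x_2,y_2)$, so $A_2 = -\varphi_{z_2}/({\scriptstyle{\sqrt{-1}}}+\varphi_u) \equiv 0$ and $\mathcal{L}_2 = \partial/\partial z_2$. The Levi matrix~\eqref{levi-matrix} then has vanishing second row and column except for its top-left entry, which is nonzero precisely because the rank-$1$ hypothesis forces $M^3$ to be Levi-nondegenerate; hence $K^{1,0}M = \C\,\partial/\partial z_2$, and one may take $\mathcal{K} = \partial/\partial z_2$ (so that $k = 0$). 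Since $A_1$ depends only on $(z_1,\overline{z}_1,u)$, we get $(\partial/\partial z_2)(\overline{A_1}) = 0$, whence $[\mathcal{K},\overline{\mathcal{L}}_1] = [\partial/\partial z_2,\,\overline{\mathcal{L}}_1] = 0$ and the Freeman form vanishes identically.

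For the forward implication, I would introduce the complex rank-$2$ field $\mathcal{D}_\C := K^{1,0}M \oplus K^{0,1}M$, spanned by $\mathcal{K},\overline{\mathcal{K}}$. The three relations~\eqref{K-involutiveness} — the first two automatic for a line bundle — show $\mathcal{D}_\C$ is closed under bracket, so the $J$-invariant real distribution $\mathcal{D} := \mathcal{D}_\C \cap TM$ is Frobenius-integrable; let $\mathcal{F}$ be its real-analytic foliation, whose leaves are complex curves. This foliation exists \emph{unconditionally}; the role of Freeman vanishing is to promote it to a product. Set $W := T^{1,0}M \oplus K^{0,1}M$, spanned by $\mathcal{L}_1,\mathcal{K},\overline{\mathcal{K}}$. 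I claim $[\Gamma(\mathcal{D}_\C),\Gamma(W)] \subset \Gamma(W)$: indeed $[\mathcal{K},\mathcal{L}_1] \in T^{1,0}M$ by Frobenius-involutiveness of $T^{1,0}M$, and $[\mathcal{K},\overline{\mathcal{K}}] \in \mathcal{D}_\C \subset W$; finally, conjugating the hypothesis $[\mathcal{K},\overline{\mathcal{L}}_1] \in K^{1,0}M \oplus T^{0,1}M$ yields $[\overline{\mathcal{K}},\mathcal{L}_1] \in T^{1,0}M \oplus K^{0,1}M = W$. It is exactly this last bracket where Freeman vanishing is indispensable: a nonzero Freeman form would place a component of $[\overline{\mathcal{K}},\mathcal{L}_1]$ in $T^{0,1}M \setminus K^{0,1}M$, destroying the invariance.

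This invariance means the transverse CR line $W/\mathcal{D}_\C \cong T^{1,0}M / K^{1,0}M$ is holonomy-invariant along $\mathcal{F}$, hence descends to the local leaf space $N := M^5/\mathcal{F}$; passing to a $\mathcal{C}^\omega$ transversal, $N$ becomes a $3$-dimensional real-analytic CR manifold whose induced Levi form is the nondegenerate reduced Levi form of $M^5$ on $T^{1,0}M/K^{1,0}M$. By local embeddability of real-analytic hypersurface-type CR structures, $N$ realizes as a Levi-nondegenerate $M^3 \subset \C^2$, and $\pi\colon M^5 \to M^3$ is a CR submersion with complex-curve fibers; a $\mathcal{C}^\omega$ CR section, flowed along the leaves, trivializes $\pi$ into a CR-diffeomorphism $M^3 \times \C \xrightarrow{\sim} M^5$, which by real-analyticity extends to an ambient local biholomorphism of $\C^3$. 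Concretely the same conclusion is reached by straightening $\mathcal{K}$ to $\partial/\partial z_2$ — after complexifying $K^{1,0}M$ to an ambient holomorphic line field — and verifying that the graphing function becomes $z_2$-independent. I expect the main obstacle to be exactly this descent step: upgrading the (unconditionally integrable) distribution $\mathcal{D}$ from a bare foliation to a genuine product demands that the holonomy-invariant transverse CR structure and its Levi form descend \emph{coherently} and that the abstract $3$-dimensional quotient embed as a hypersurface in $\C^2$ — this coherence being precisely what the Freeman-form hypothesis supplies, and what must be checked with care rather than taken for granted.
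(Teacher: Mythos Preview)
Your proof is correct but follows a genuinely different route from the paper's. The paper argues entirely in the explicit graph coordinates with the generator $\mathcal{K} = k\,\mathcal{L}_1 + \mathcal{L}_2 = k\,\partial_{z_1} + \partial_{z_2} + (kA_1+A_2)\,\partial_u$ and shows directly that both coefficient functions $k$ and $kA_1+A_2$ are CR: the bracket $[\mathcal{K},\overline{\mathcal{K}}]\in K^{1,0}M\oplus K^{0,1}M$ forces $\overline{\mathcal{K}}(k)=0$, Freeman vanishing reads as $\overline{\mathcal{L}}_1(k)=0$, so $k$ is CR; then $\overline{\mathcal{L}}_1(kA_1+A_2)=0$ follows from the identity~\eqref{surprising-computational-fact} and $\overline{\mathcal{L}}_2(kA_1+A_2)=0$ reduces to the vanishing Levi determinant~\eqref{Levi-determinant}. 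Since $\mathcal{K}$ therefore extends to a holomorphic $(1,0)$ field on a neighborhood in $\C^3$, one straightens it to $\partial_{z_2}$ and the product structure drops out. Your argument instead packages the same bracket computations as $\mathcal{D}_\C$-invariance of $W = T^{1,0}M \oplus K^{0,1}M$, descends the CR structure to the leaf space, and appeals to abstract real-analytic embeddability of the resulting Levi-nondegenerate $3$-dimensional CR quotient. The paper's route buys a short self-contained computation with no leaf-space or abstract-embedding machinery and no trivialization step to justify; yours buys a cleaner geometric explanation of the Freeman form as precisely the obstruction to coherent descent of the transverse CR line, and a template visibly independent of the specific dimensions $(n,c)=(2,1)$. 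Note finally that your closing remark about ``complexifying $K^{1,0}M$ to an ambient holomorphic line field'' \emph{is} the paper's argument in disguise, and its justification is exactly the CR-coefficient verification the paper performs --- so that sentence is not an alternative shortcut but a pointer to the other proof.
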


\begin{proof}
\smartqed
With:
\[
\mathcal{K}
=
k\,\mathcal{L}_1
+
\mathcal{L}_2
=
k\,\frac{\partial}{\partial z_1}
+
\frac{\partial}{\partial z_2}
+
\big(k\,A_1+A_2\big)\,
\frac{\partial}{\partial u},
\]
the involutiveness~\thetag{ \ref{K-involutiveness}}:
\[
\big[\mathcal{K},\,\overline{\mathcal{K}}\big]
=
\text{function}
\cdot
\mathcal{K}
+
\text{function}
\cdot
\overline{\mathcal{K}},
\]
and the fact that this bracket does not contain either $\partial \big/
\partial z_2$ or $\partial \big/ \partial \overline{ z}_2$:
\[
\aligned
\big[
\mathcal{K},\,
\overline{\mathcal{K}}
\big]
&
=
\bigg[
k\,\frac{\partial}{\partial z_1}
+
\frac{\partial}{\partial z_2}
+
\big(k\,A_1+A_2\big)\,
\frac{\partial}{\partial u},\,\,\,
\overline{k}\,\frac{\partial}{\partial\overline{z}_1}
+
\frac{\partial}{\partial\overline{z}_2}
+
\big(\overline{k}\,\overline{A}_1+\overline{A}_2\big)\,
\frac{\partial}{\partial u}
\bigg]
\\
&
=
\mathcal{K}\big(\overline{k}\big)\,
\frac{\partial}{\partial\overline{z}_1}
-
\overline{\mathcal{K}}\big(k\big)\,
\frac{\partial}{\partial z_1}
+
\Big(
\mathcal{K}\big(\overline{k}\,\overline{A}_1+\overline{A}_2\big)
-
\overline{\mathcal{K}}\big(k\,A_1+A_2\big)
\Big)\,
\frac{\partial}{\partial u},
\endaligned
\]
entail:
\[
0
\equiv
\overline{\mathcal{K}}(k)
\equiv
\mathcal{K}\big(\overline{k}\big).
\]
Next, identical vanishing of the Freeman form:
\[
\big[\mathcal{K},\,\overline{\mathcal{L}}_1\big]
\,\equiv\,
0
\ \ \ \ \ 
\mod\,\,
\big(
\mathcal{K},\,\overline{\mathcal{K}},\,\overline{\mathcal{L}}_1
\big),
\]
with:

\[
\aligned
\big[\mathcal{K},\,\overline{\mathcal{L}}_1\big]
&
=
\bigg[
k\,\frac{\partial}{\partial z_1}
+
\frac{\partial}{\partial z_2}
+
\big(k\,A_1+A_2\big)\,
\frac{\partial}{\partial u},\,\,\,
\frac{\partial}{\partial\overline{z}_1}
+
\overline{A}_1\,\frac{\partial}{\partial u}
\bigg]
\\
&
=
\overline{\mathcal{L}}_1(k)\,
\frac{\partial}{\partial z_1}
+
\text{something}\,
\frac{\partial}{\partial u},
\endaligned
\]
reads as:
\[
0
\equiv
\overline{\mathcal{L}}_1(k),
\]
and since $\big\{ \overline{ \mathcal{ K}},
\overline{ \mathcal{L}}_1 \big\}$ is a $T^{0, 1}M$-frame, 
\[
\text{\sl The $\mathcal{ C}^\omega$ slanting function $k$ is 
a CR function!}
\]
Moreover, the last coefficient-function of:
\[
\mathcal{K}
=
k\,\frac{\partial}{\partial z_1}
+
\frac{\partial}{\partial z_2}
+
\big(k\,A_1+A_2\big)\,
\frac{\partial}{\partial u}
\]
is {\em also} a CR function, namely it is annihilated by
$\overline{ \mathcal{ L}}_1$, $\overline{ \mathcal{ L}}_2$,
because firstly:
\[
0
\overset{?}{\,=\,}
\overline{\mathcal{L}}_1
\big(
k\,A_1+A_2\big)
\,=\,
k\,\overline{\mathcal{L}}_1(A_1)
+
\overline{\mathcal{L}}_1(A_2)
\overset{\text{ok}}{\,=\,}
0,
\]
thanks to~\thetag{ 
\ref{surprising-computational-fact}}, and secondly because
a direct computation gives:
\[
\aligned
\!\!\!\!\!\!\!\!\!\!\!\!\!\!\!\!\!\!\!\!\!\!\!\!\!\!\!\!\!\!
0
&
\overset{?}{\,=\,}
\overline{\mathcal{L}}_2
\big(
k\,A_1+A_2\big)
\\
\!\!\!\!\!\!\!\!\!\!\!\!\!\!\!\!\!\!\!\!\!\!\!\!\!\!\!\!\!\!
&
\,=\,
k\,\overline{\mathcal{L}}_2(A_1)
+
\overline{\mathcal{L}}_2(A_2)
\\
\!\!\!\!\!\!\!\!\!\!\!\!\!\!\!\!\!\!\!\!\!\!\!\!\!\!\!\!\!\!
&
\,=\,
\frac{-\,\text{numerator of the Levi determinant}}{
({\scriptstyle{\sqrt{-1}}}+\varphi_u)\,
[\varphi_{z_1\overline{z}_1}+\varphi_{z_1\overline{z}_1}\varphi_u\varphi_u
-
{\scriptstyle{\sqrt{-1}}}\,\varphi_{\overline{z}_1}\varphi_{z_1u}
-
\varphi_{\overline{z}_1}\varphi_{z_1u}\varphi_u
+
{\scriptstyle{\sqrt{-1}}}\,\varphi_{z_1}\varphi_{\overline{z}_1u}\varphi_u
+
\varphi_{z_1}\varphi_{\overline{z}_1}\varphi_{uu}]}
\\
&
\,\equiv\,
0.
\endaligned
\]
In conclusion, the $(1, 0)$ field $\mathcal{ K}$ has $\mathcal{
C}^\omega$ CR coefficient-functions, hence $\mathcal{ K}$ is locally
extendable to a neighborhood of $M$ in $\C^3$ as a $(1, 0)$ field
having holomorphic coefficients, and a straightening $\mathcal{ K}
= \partial \big/ \partial z_2$ yields $M^5 \cong M^3 \times \C$.
\qed
\end{proof}

Excluding therefore such a degeneration, and focusing attention on a
Zariski-generic initial classification, it therefore remains only the
General Class $\text{\sf IV}_{\text{\sf 2}}$.

\subsection{Existence of the six General Classes $\text{\sf
I}$, $\text{\sf II}$, $\text{\sf III}_{\text{\sf 1}}$, 
$\text{\sf III}_{\text{\sf 2}}$, 
$\text{\sf IV}_{\text{\sf 1}}$,
$\text{\sf IV}_{\text{\sf 2}}$}

Graphing functions are essentially free and arbitrary.

\begin{proposition}
\text{\rm (\cite{ Merker-5-CR})}
Every CR-generic submanifold belonging to the four classes $\text{\sf
I}$, $\text{\sf II}$, $\text{\sf III}_{\text{\sf 1}}$, $\text{\sf
IV}_{\text{\sf 1}}$ may be represented in suitable local holomorphic
coordinates as:
\[
\underline{\text{\sf (I):}}
\ \ \ \ \
\left[
\aligned
v
=
z\overline{z}
+
z\overline{z}\,{\rm O}_1\big(z,\overline{z}\big)
+
z\overline{z}\,{\rm O}_1(u),
\endaligned\right.
\]
\[
\underline{\text{\sf (II):}}
\ \ \ \ \
\left[
\aligned
v_1
&
=
z\overline{z}
\ \ \ \ \ \ \ \ \ \ \ \ 
+
z\overline{z}\,{\rm O}_2\big(z,\overline{z}\big)
+
z\overline{z}\,{\rm O}_1(u_1)
+
z\overline{z}\,{\rm O}_1(u_2),\,\,
\\
v_2
&
=
z^2\overline{z}
+
z\overline{z}^2
+
z\overline{z}\,{\rm O}_2\big(z,\overline{z}\big)
+
z\overline{z}\,{\rm O}_1(u_1)
+
z\overline{z}\,{\rm O}_1(u_2),
\endaligned\right.
\]
\[
\underline{\text{\sf 
$\text{\sf (III)}_{\text{\sf 1}}$:}}
\ \ \ \ \
\left[
\aligned
v_1
&
=
z\overline{z}
\ \ \ \ \ \ \ \ \ \ \ \ \ \ \ \ \ \ \ \ \ \ \
+
z\overline{z}\,{\rm O}_2\big(z,\overline{z}\big)
+
z\overline{z}\,{\rm O}_1(u_1)
+
z\overline{z}\,{\rm O}_1(u_2)
+
z\overline{z}\,{\rm O}_1(u_3),\,\,
\\
v_2
&
=
z^2\overline{z}
+
z\overline{z}^2
\ \ \ \ \ \ \ \ \ \ \,
+
z\overline{z}\,{\rm O}_2\big(z,\overline{z}\big)
+
z\overline{z}\,{\rm O}_1(u_1)
+
z\overline{z}\,{\rm O}_1(u_2)
+
z\overline{z}\,{\rm O}_1(u_3),\,\,
\\
v_3
&
=
{\scriptstyle{\sqrt{-1}}}\,\big(z^2\overline{z}
-
z\overline{z}^2\big)
+
z\overline{z}\,{\rm O}_2\big(z,\overline{z}\big)
+
z\overline{z}\,{\rm O}_1(u_1)
+
z\overline{z}\,{\rm O}_1(u_2)
+
z\overline{z}\,{\rm O}_1(u_3),
\endaligned\right.
\]
\[
\underline{\text{\sf 
$\text{\sf (IV)}_{\text{\sf 1}}$:}}
\ \ \ \ \
\left[
\aligned
v
=
z_1\overline{z}_1
\pm
z_2\overline{z}_2
+
{\rm O}_3\big(z_1,z_2,\overline{z}_1,\overline{z}_2,u\big),
\endaligned\right.
\]
with arbitrary remainder functions. For class:
\[
\!\!\!\!\!\!\!\!\!\!\!\!\!\!\!\!\!\!\!\!
\underline{\text{\footnotesize\sf (III)}_{
\text{\tiny\sf 2}}:}
\ \ \ \ \
\aligned
v_1
&
=
z\overline{z}
+
c_1\,z^2\overline{z}^2
+
z\overline{z}\,{\rm O}_3\big(z,\overline{z}\big)
+
z\overline{z}\,u_1\,{\rm O}_1\big(z,\overline{z},u_1\big)
+
\\
&
\ \ \ \ \ \ \ \ \ \ \ \ \ \ \ \ \ \ \ \ \ \ 
+
z\overline{z}\,u_2\,{\rm O}_1\big(z,\overline{z},u_1,u_2\big)
+
z\overline{z}\,u_3\,{\rm O}_1\big(z,\overline{z},u_1,u_2,u_3\big),
\\
v_2
&
=
z^2\overline{z}
+
z\overline{z}^2
+
z\overline{z}\,{\rm O}_3\big(z,\overline{z}\big)
+
z\overline{z}\,u_1\,{\rm O}_1\big(z,\overline{z},u_1\big)
+
\\
&
\ \ \ \ \ \ \ \ \ \ \ \ \ \ \ \ \ \ \ \ \ \ 
+
z\overline{z}\,u_2\,{\rm O}_1\big(z,\overline{z},u_1,u_2\big)
+
z\overline{z}\,u_3\,{\rm O}_1\big(z,\overline{z},u_1,u_2,u_3\big),
\\
v_3
&
=
2\,z^3\overline{z}
+
2\,z\overline{z}^3
+
3\,z^2\overline{z}^2
+
z\overline{z}\,{\rm O}_3\big(z,\overline{z}\big)
+
z\overline{z}\,u_1\,{\rm O}_1\big(z,\overline{z},u_1\big)
+
\\
&
\ \ \ \ \ \ \ \ \ \ \ \ \ \ \ \ \ \ \ \ \ \ \ \ \ \ \ \ \ \ \ \ \ \ \ 
\ \ \ \ \ \ 
+
z\overline{z}\,u_2\,{\rm O}_1\big(z,\overline{z},u_1,u_2\big)
+
z\overline{z}\,u_3\,{\rm O}_1\big(z,\overline{z},u_1,u_2,u_3\big),
\endaligned
\rule[-5pt]{0pt}{27pt}
\]
the $3$ graphing functions $\varphi_1$, $\varphi_2$, $\varphi_3$
are subjected to the identical vanishing:
\[
\aligned
0
\equiv
\left\vert\!
\begin{array}{ccc}
\substack{
\mathcal{L}(\overline{A}_1)-\overline{\mathcal{L}}(A_1)}
&
\substack{
\mathcal{L}(\overline{A}_2)-\overline{\mathcal{L}}(A_2)}
&
\substack{
\mathcal{L}(\overline{A}_3)-\overline{\mathcal{L}}(A_3)}\bigskip
\\
\substack{
\mathcal{L}(\mathcal{L}(\overline{A}_1))
-2\mathcal{L}(\overline{\mathcal{L}}(A_1))+\\
+\overline{\mathcal{L}}(\mathcal{L}(A_1))}
&
\substack{
\mathcal{L}(\mathcal{L}(\overline{A}_2))
-2\mathcal{L}(\overline{\mathcal{L}}(A_2))+\\
+\overline{\mathcal{L}}(\mathcal{L}(A_2))}
&
\substack{
\mathcal{L}(\mathcal{L}(\overline{A}_3))
-2\mathcal{L}(\overline{\mathcal{L}}(A_3))+\\
+\overline{\mathcal{L}}(\mathcal{L}(A_3))}\bigskip
\\
\substack{
-\overline{\mathcal{L}}(\overline{\mathcal{L}}(A_1))
+2\overline{\mathcal{L}}(\mathcal{L}(\overline{A}_1))-\\
-\mathcal{L}(\overline{\mathcal{L}}(\overline{A}_1))}
&
\substack{
-\overline{\mathcal{L}}(\overline{\mathcal{L}}(A_2))
+2\overline{\mathcal{L}}(\mathcal{L}(\overline{A}_2))-\\
-\mathcal{L}(\overline{\mathcal{L}}(\overline{A}_2))}
&
\substack{
-\overline{\mathcal{L}}(\overline{\mathcal{L}}(A_3))
+2\overline{\mathcal{L}}(\mathcal{L}(\overline{A}_3))-\\
-\mathcal{L}(\overline{\mathcal{L}}(\overline{A}_3))}
\end{array}
\!\right\vert.
\endaligned
\]
Lastly, for class:
\[
\underline{\text{\small\sf 
$\text{\sf (IV)}_{\text{\sf 2}}$:}}
\ \ \ \ \
\left[
\aligned
v
=
z_1\overline{z}_1
+
{\textstyle{\frac{1}{2}}}\,
z_1z_1\overline{z}_2
+
{\textstyle{\frac{1}{2}}}\,
z_2\overline{z}_1\overline{z}_1
+
{\rm O}_4\big(z_1,z_2,\overline{z}_1,\overline{z}_2\big)
+
u\,
{\rm O}_2\big(z_1,z_2,\overline{z}_1,\overline{z}_2,u\big),
\endaligned\right.
\]
the graphing function $\varphi$ is subjected to the identical
vanishing of the Levi determinant~\thetag{ 
\ref{Levi-determinant}}.

\end{proposition}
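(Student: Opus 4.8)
The plan is to read the representation claim, for each General Class, as a two-part assertion: first that the displayed leading terms (the corresponding Model) can be reached after a suitable local biholomorphism, leaving an arbitrary higher-order remainder of the indicated weight; and second that the rank conditions defining the class impose either no further constraint (Classes \textsf{I}, \textsf{II}, $\textsf{III}_{\textsf 1}$, $\textsf{IV}_{\textsf 1}$) or exactly the stated determinantal identities (Classes $\textsf{III}_{\textsf 2}$, $\textsf{IV}_{\textsf 2}$). Throughout I would work with the explicit rational frame $\{\mathcal{L}_i\}$ of Lemma~\ref{L-explicit}, whose coefficients $A_i^j$ are rational in the first-order jet of the $\varphi_\bullet$; because $T^{1,0}M$ and $T^{0,1}M$ are Frobenius-involutive, only the mixed iterated brackets are nontrivial, and modulo $T^{1,0}M\oplus T^{0,1}M$ (equivalently modulo $\partial/\partial z_\bullet$, $\partial/\partial\overline z_\bullet$) these brackets live in the $c$-dimensional $u$-space, with $\partial/\partial u_j$-components that are explicit rational differential expressions in the jets of $\varphi_\bullet$.

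For the four unconstrained classes I would proceed by weight-graded normalization. Starting from the Taylor expansion of the graphing function(s), the lowest-order monomials that cannot be removed by polynomial local biholomorphisms are precisely the Model monomials ($z\overline z$ for \textsf{I}; $z\overline z$ and $z^2\overline z + z\overline z^2$ for \textsf{II}; and so on). I would perform linear changes to install the Levi form and the first higher brackets in Model position, then successive higher-order changes to clear the normalizable monomials, and observe that what survives is the Model plus remainders carrying exactly the transversal weighting exhibited in the statement. Since the rank conditions defining each class are open, they persist under arbitrary higher-order perturbation at a Zariski-generic point; hence the surviving remainders are genuinely free, which yields simultaneously existence of the class and the asserted normal form.

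For Class $\textsf{III}_{\textsf 2}$ the extra input is the rank-$4$ condition $4=\mathrm{rank}_\C\bigl(\mathcal{L},\overline{\mathcal{L}},[\mathcal{L},\overline{\mathcal{L}}],[\mathcal{L},[\mathcal{L},\overline{\mathcal{L}}]],[\overline{\mathcal{L}},[\mathcal{L},\overline{\mathcal{L}}]]\bigr)$ separating it from $\textsf{III}_{\textsf 1}$. Here $n=1$ and $c=3$, so reducing modulo $\partial/\partial z$, $\partial/\partial\overline z$ projects the three fields $[\mathcal{L},\overline{\mathcal{L}}]$, $[\mathcal{L},[\mathcal{L},\overline{\mathcal{L}}]]$, $[\overline{\mathcal{L}},[\mathcal{L},\overline{\mathcal{L}}]]$ into the three-dimensional $u$-space, and their $\partial/\partial u_j$-components are exactly the three rows of the displayed $3\times 3$ matrix, the third row being $-\,\overline{(\text{second row})}$ in accordance with $[\overline{\mathcal{L}},[\mathcal{L},\overline{\mathcal{L}}]]=-\,\overline{[\mathcal{L},[\mathcal{L},\overline{\mathcal{L}}]]}$. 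The rank dropping from $5$ to $4$ is then precisely the vanishing of their determinant. I expect the $u$-component bookkeeping to be the main obstacle: one must verify that the second row is really $\mathcal{L}(\mathcal{L}(\overline A_j))-2\mathcal{L}(\overline{\mathcal{L}}(A_j))+\overline{\mathcal{L}}(\mathcal{L}(A_j))$, where the factor $2$ and the last term arise solely from the correction terms in $[\mathcal{L},[\mathcal{L},\overline{\mathcal{L}}]]$ produced by the $u$-dependence of the $A_i^j$, namely from the identity $[\mathcal{L},\overline{\mathcal{L}}](A_j)=\sum_k B_k\,\partial_{u_k}A_j=\mathcal{L}(\overline{\mathcal{L}}(A_j))-\overline{\mathcal{L}}(\mathcal{L}(A_j))$ with $B_k=\mathcal{L}(\overline A_k)-\overline{\mathcal{L}}(A_k)$.

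For Class $\textsf{IV}_{\textsf 2}$ the required constraint is that the Levi form have rank exactly $1$, which is the vanishing of the $2\times 2$ Levi determinant; this identity has already been computed as~\thetag{\ref{Levi-determinant}}, so it remains only to install the Model leading part $z_1\overline z_1+\tfrac12 z_1 z_1\overline z_2+\tfrac12 z_2\overline z_1\overline z_1$ by the same normalization and to record that the surviving remainder is constrained solely by~\thetag{\ref{Levi-determinant}}. Assembling the six cases then completes the proof.
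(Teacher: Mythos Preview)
The paper does not prove this proposition in the text; it is stated with a citation to \cite{Merker-5-CR}, and the narrative moves on immediately to the next subsection. So there is no in-paper proof to compare against.

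Your sketch is essentially correct and follows the natural route. For the four unconstrained classes the defining bracket-rank conditions are open, so once the Model leading part is installed by weight-graded polynomial changes of coordinates, any higher-order remainder of the indicated shape preserves the class; this gives both existence and the normal form simultaneously. For $\textsf{III}_{\textsf 2}$ your identification of the three rows of the $3\times 3$ matrix with the $\partial/\partial u_j$-components of $[\mathcal{L},\overline{\mathcal{L}}]$, $[\mathcal{L},[\mathcal{L},\overline{\mathcal{L}}]]$, $[\overline{\mathcal{L}},[\mathcal{L},\overline{\mathcal{L}}]]$ is correct, including the factor $2$ and the correction term $\overline{\mathcal{L}}(\mathcal{L}(A_j))$ coming from $[\mathcal{L},\overline{\mathcal{L}}](A_j)=\mathcal{L}(\overline{\mathcal{L}}(A_j))-\overline{\mathcal{L}}(\mathcal{L}(A_j))$; the rank drop $5\to 4$ is then exactly the vanishing of this determinant. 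For $\textsf{IV}_{\textsf 2}$ the Levi-rank-$1$ constraint is already computed as~\thetag{\ref{Levi-determinant}}, so only the normalization of the leading part remains.

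What is only sketched, and would need to be written out in a full proof, is the normalization step itself: exhibiting the explicit sequence of polynomial biholomorphisms that kill the non-Model low-order monomials and tracking the exact remainder weights (for instance, why in Class $\textsf{III}_{\textsf 2}$ the order-$4$ term $c_1 z^2\overline z^2$ in $v_1$ is allowed to survive, and why the $u$-dependence enters with the specific factorizations $z\overline z\,u_j\,{\rm O}_1(\cdots)$). This is routine but lengthy, and is precisely the content delegated to the cited reference.
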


\subsection{Cartan equivalences and curvatures}

\medskip\noindent{\bf Problem 3.} 
(Subproblem of Problem 1)
{\sl Perform Cartan's local equivalence procedure for these six general
classes $\text{\sf I}$, $\text{\sf II}$, $\text{\sf III}_{\text{\sf
1}}$, $\text{\sf III}_{\text{\sf 2}}$, $\text{\sf IV}_{\text{\sf
1}}$, $\text{\sf IV}_{\text{\sf 2}}$ of nondegenerate CR
manifolds up to dimension {\bf 5}.}

\medskip

\medskip\noindent{\bf Class $\text{\sf I}$ equivalences.}
Consider firstly Class $\text{\sf I}$ hypersurfaces $M^3 \subset \C^2$
graphed as $v = \varphi(x, y, u)$ with:
\[
\bigg\{
\mathcal{L}
=
\frac{\partial}{\partial z}
-
\frac{\varphi_z}{{\scriptstyle{\sqrt{-1}}}+\varphi_u}\,
\frac{\partial}{\partial u},
\ \ \ \ \
\overline{\mathcal{L}}
=
\frac{\partial}{\partial\overline{z}}
-
\frac{\varphi_{\overline{z}}}{
-\,{\scriptstyle{\sqrt{-1}}}+\varphi_u}\,
\frac{\partial}{\partial u},
\ \ \ \ \
\mathcal{T}
:=
{\scriptstyle{\sqrt{-1}}}\,\big[
\mathcal{L},\overline{\mathcal{L}}
\big]
=
\ell\,\frac{\partial}{\partial u}
\bigg\}
\]
making a frame for $\C \otimes_\R TM$, where the Levi-factor
(nonvanishing) function:
\begin{equation}
\label{ell-denominator}
\aligned
\ell
:= 
&\,
{\scriptstyle{\sqrt{-1}}}\,
\Big(
\mathcal{L}\big(\overline{A}\big)
-
\overline{\mathcal{L}}(A)
\Big)
\\
=
&\,
\frac{1}{({\scriptstyle{\sqrt{-1}}}+\varphi_u)^2\,
(-{\scriptstyle{\sqrt{-1}}}+\varphi_u)^2}\,
\bigg\{
2\,\varphi_{z\overline{z}}
+
2\,\varphi_{z\overline{z}}\,\varphi_u\,\varphi_u
-
2\,{\scriptstyle{\sqrt{-1}}}\,\varphi_{\overline{z}}\,\varphi_{zu}
-
2\,\varphi_{\overline{z}}\,\varphi_{zu}\,\varphi_u
\,+
\\
&
\ \ \ \ \ \ \ \ \ \ \ \ \ \ \ \ \ \ \ \ \ \ \ \ \ \ \ \ \ \ \ \ \ \
\ \ \ \ \ \ \ \ \ \ \ \ \ \ \ \ \ \ \
+
2\,{\scriptstyle{\sqrt{-1}}}\,\varphi_z\,\varphi_{\overline{z}u}
+
2\,\varphi_z\,\varphi_{\overline{z}}\,\varphi_{uu}
-
2\,\varphi_z\,\varphi_{\overline{z}u}\,\varphi_u
\bigg\}
\endaligned
\end{equation}
{\em will, notably, enter computations in denominator place}.

Introduce also the dual coframe for $\C \otimes_\R T^*M$:
\[
\big\{
\rho_0,\,
\overline{\zeta}_0,\,
\zeta_0
\big\},
\]
satisfying:
\[
\begin{array}{ccc}
\rho_0(\mathcal{T})=1
\ \ \ & \ \ \ 
\rho_0(\overline{\mathcal{L}})=0
\ \ \ & \ \ \ 
\rho_0(\mathcal{L})=0,
\\
\overline{\zeta}_0(\mathcal{T})=0
\ \ \ & \ \ \ 
\overline{\zeta_0}(\overline{\mathcal{L}})=1
\ \ \ & \ \ \ 
\overline{\zeta}_0\big(\mathcal{L}\big)=0,
\\
\zeta_0(\mathcal{T})=0
\ \ \ & \ \ \ 
\zeta_0(\overline{\mathcal{L}})=0
\ \ \ & \ \ \ 
\zeta_0\big(\mathcal{L}\big)=1.
\end{array}
\]
Since:
\[
\big[\mathcal{L},\,\mathcal{T}\big]
=
\bigg[
\frac{\partial}{\partial z}
+
A\,\frac{\partial}{\partial u},\,\,
\ell\,
\frac{\partial}{\partial u}
\bigg]
=
\Big(
\ell_z
+
A\,\ell_u
-
\ell\,A_u
\Big)\,
\frac{\partial}{\partial u}
=
\overbrace{
\frac{\ell_z+A\,\ell_u-\ell\,A_u}{\ell}}^{=:\,P}\,
\mathcal{T},
\]
the {\sl initial Darboux structure} reads dually as:
\[
\aligned
d\rho_0
&
=
P\,\rho_0\wedge\zeta_0
+ 
\overline{P}\,\rho_0\wedge\overline{\zeta}_0 
+
{\scriptstyle{\sqrt{-1}}}\,\zeta_0\wedge\overline{\zeta}_0,
\\
d\overline{\zeta}_0
&
=
0,
\\
d\zeta_0
&
=
0,
\endaligned
\]
with a {\em single} fundamental function $P$. \'Elie Cartan in 1932
performed his equivalence procedure (well presented in~\cite{
Olver-1995}) for such $M^3 \subset \C^2$, but the completely
explicit aspects must be endeavoured once again for systematic
treatment of Problem~3. Within Tanaka's theory,
Ezhov-McLaughlin-Schmalz (\cite{ Ezhov-McLaughlin-Schmalz-2011})
already constructed a Cartan connection on a certain principal bundle
$N^8 \to M^3$, whose effective aspects have been explored further
in~\cite{Merker-Sabzevari-2012, Merker-Sabzevari-M3-C2}.

Indeed, as already observed in Lemma~\ref{3-rank} ({\em see} also~\cite{
Merker-5-CR, Merker-Sabzevari-M3-C2}), the initial
{\sl $G$-structure} for (local) biholomorphic
equivalences of such hypersurfaces is:
\[
{\sf G}_{\text{\sf IV}_{\text{\sf 2}}}^{\sf initial}
\,:=\,
\left\{
\left(\!
\begin{array}{ccc} 
{\sf a}\overline{\sf a} & 0 & 0 \\ 
\overline{\sf b} & \overline{\sf a} & 0 \\ 
{\sf b} & 0 & {\sf a} \\ 
\end{array}
\!\right)
\,\in\,{\sf GL}_3(\C)
\colon 
\ \ \ 
{\sf a}\in\C\backslash\{0\},\,\,
{\sf b}\in\C
\right\}.
\]
Cartan's method is to deal with the so-called {\sl lifted coframe:}
\[
\left(\!\!
\begin{array}{c}
\rho
\\
\overline{\zeta}
\\
\zeta
\end{array}
\!\!\right)
:=
\left(\!\!
\begin{array}{ccc}
{\sf a}\overline{\sf a} & 0 & 0
\\
\overline{\sf b} & \overline{\sf a} & 0
\\
{\sf b} & 0 & {\sf a}
\end{array}
\!\!\right)
\left(\!\!
\begin{array}{c}
\rho_0
\\
\overline{\zeta}_0
\\
\zeta_0
\end{array}
\!\!\right),
\]
in the space of $\big(x, y, u, {\sf a}, \overline{\sf a}, {\sf b},
\overline{\sf b}\big)$. After two absorbtions-normalizations and after
one prolongation:

\begin{theorem}
{\rm (M.-{\sc Sabzevari}, \cite{ Merker-Sabzevari-M3-C2})}
The biholomorphic equivalence problem for $M^3 \subset \C^2$ 
transforms some explicit eight-dimensional coframe:
\[
\big\{
\rho,\overline{\zeta},\zeta,\alpha,
\beta,\overline{\alpha},\overline{\beta},\delta 
\big\},
\]
on a certain manifold $N^8 \longrightarrow M^3$
having $\{ e\}$-structure equations:
\begin{eqnarray*} 
\begin{array}{ll}\footnotesize 
d\rho=\alpha\wedge\rho+\overline\alpha\wedge\rho
+
{\scriptstyle{\sqrt{-1}}}\,\,\zeta\wedge\overline\zeta, 
& 
\\ 
d\overline\zeta=\overline\beta\wedge\rho+\overline\alpha\wedge\overline\zeta, 
& 
\\ 
d\zeta=\beta\wedge\rho+\alpha\wedge\zeta, & 
\\ 
d\alpha=\delta\wedge\rho+2\,{\scriptstyle{\sqrt{-1}}}\,\zeta\wedge\overline{\beta}
+
{\scriptstyle{\sqrt{-1}}}\,\overline{\zeta}\wedge\beta,

& 
\\ 
d\beta=\delta\wedge\zeta+\beta\wedge\overline{\alpha}+{\frak 
I}\,\overline\zeta\wedge\rho, 
& 
\\ 
d\overline\alpha=\delta\wedge\rho-2\,{\scriptstyle{\sqrt{-1}}}\,
\overline\zeta\wedge{\beta}-{\scriptstyle{\sqrt{-1}}}\,{\zeta}\wedge\overline\beta,
& 
\\ 
d\overline\beta=\delta\wedge\overline\zeta
+
\overline\beta\wedge{\alpha}+\overline{\frak
I}\,\zeta\wedge\rho, 
& 
\\ 
d\delta=\delta\wedge\alpha+\delta\wedge\overline\alpha
+
{\scriptstyle{\sqrt{-1}}}\,\beta\wedge\overline\beta+{\frak
T}\,\rho\wedge\zeta+\overline{\frak T}\,\rho\wedge\overline\zeta, & 
\end{array} 
\end{eqnarray*} 
with the single primary complex invariant: 
\[
\aligned 
\mathfrak{I}
&
:=
\frac{1}{6}\,
\frac{1}{{\sf a}\overline{\sf a}^3}\,
\Big(
-\,2\,
\overline{\mathcal{L}}\big(\mathcal{L}\big(
\overline{\mathcal{L}}\big(\overline{P}\big)\big)\big)
+
3\,\overline{\mathcal{L}}\big(
\overline{\mathcal{L}}\big(
\mathcal{L}\big(\overline{P}\big)\big)\big)
-\,
7\,\overline{P}\,\,
\overline{\mathcal{L}}\big(\mathcal{L}\big(\overline{P}\big)\big)
+
\\
&
\ \ \ \ \ \ \ \ \ \ \ \ \ \ \ \ \ \ \ \,
+
4\,\overline{P}\,
\mathcal{L}\big(\overline{\mathcal{L}}\big(
\overline{P}\big)\big)
-
\mathcal{L}\big(\overline{P}\big)\,
\overline{\mathcal{L}}\big(\overline{P}\big)
+
2\,\overline{P}\,\overline{P}\,\mathcal{L}
\big(\overline{P}\big)
\Big),
\endaligned
\]
and with one secondary invariant:
\[ 
\mathfrak{T}
=
\frac{1}{\overline{\sf a}}\,\bigg(\overline{\mathcal 
L}(\overline{\frak I})-\overline P\,\overline{\frak 
I}\bigg)-{\scriptstyle{\sqrt{-1}}}\,\frac{\sf b}{{\sf a}\overline{\sf a}}\,\overline{\frak I}. 
\] 
\end{theorem}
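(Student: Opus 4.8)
The plan is to carry out Élie Cartan's equivalence method explicitly, starting from the initial Darboux structure
\[
d\rho_0=P\,\rho_0\wedge\zeta_0+\overline{P}\,\rho_0\wedge\overline{\zeta}_0+{\scriptstyle{\sqrt{-1}}}\,\zeta_0\wedge\overline{\zeta}_0,\quad d\zeta_0=0,\quad d\overline{\zeta}_0=0,
\]
and lifting the coframe by the structure group $\mathsf{G}^{\sf initial}$. First I would write $d$ of the lifted forms $\rho=\mathsf{a}\overline{\mathsf{a}}\,\rho_0$, $\zeta=\mathsf{b}\,\rho_0+\mathsf{a}\,\zeta_0$, $\overline{\zeta}=\overline{\mathsf{b}}\,\rho_0+\overline{\mathsf{a}}\,\overline{\zeta}_0$, expanding $d(\mathsf{a}\overline{\mathsf{a}})$, $d\mathsf{a}$, $d\mathsf{b}$ in terms of the Maurer--Cartan forms of the group together with the pullbacks of $\rho_0,\zeta_0,\overline{\zeta}_0$. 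The torsion coefficients that emerge are then inspected group-theoretically: those lying in the image of the relevant Lie-algebra action are \emph{absorbed} by modifying the connection forms, while the residual \emph{essential} torsion is \emph{normalized} by solving for the group parameters that make it take a constant value. The two absorption-normalization rounds announced in the statement should successively pin down the fibre variables and produce the semi-basic invariants.

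After the two normalizations the induced structure equations will not yet close into an $\{e\}$-structure: the form $d\alpha$ (and its conjugate) will contain a torsion term that cannot be absorbed because the structure group has degenerated to dimension zero in the relevant direction. This is the signal to \emph{prolong}: one adjoins the remaining undetermined connection parameter as a genuine new coframe element $\delta$, lifting the problem to the eight-dimensional manifold $N^8\to M^3$ with coordinates $\big(x,y,u,\mathsf{a},\overline{\mathsf{a}},\mathsf{b},\overline{\mathsf{b}}\big)$ supplemented by the fibre coordinate corresponding to $\delta$. One then computes $d\delta$ on the prolonged space; because the prolonged structure group is trivial, the Cartan lemma forces all torsion to become \emph{invariant}, yielding the closed system displayed in the theorem.

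The crucial computational content is the explicit identification of the single primary invariant $\mathfrak{I}$ as the coefficient appearing in $d\beta=\delta\wedge\zeta+\beta\wedge\overline{\alpha}+\mathfrak{I}\,\overline{\zeta}\wedge\rho$. Tracking how $P$ and its iterated $\mathcal{L},\overline{\mathcal{L}}$-derivatives enter the successive absorptions, I would verify that the essential torsion surviving into $d\beta$ is precisely the weighted combination
\[
\mathfrak{I}=\tfrac{1}{6}\,\tfrac{1}{\mathsf{a}\overline{\mathsf{a}}^3}\Big(-2\,\overline{\mathcal{L}}\mathcal{L}\overline{\mathcal{L}}(\overline{P})+3\,\overline{\mathcal{L}}\,\overline{\mathcal{L}}\mathcal{L}(\overline{P})-7\,\overline{P}\,\overline{\mathcal{L}}\mathcal{L}(\overline{P})+4\,\overline{P}\,\mathcal{L}\overline{\mathcal{L}}(\overline{P})-\mathcal{L}(\overline{P})\,\overline{\mathcal{L}}(\overline{P})+2\,\overline{P}\,\overline{P}\,\mathcal{L}(\overline{P})\Big),
\]
the prefactor $\tfrac{1}{\mathsf{a}\overline{\mathsf{a}}^3}$ recording its conformal weight under the residual scaling. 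The secondary invariant $\mathfrak{T}$ is then obtained, not by a fresh normalization, but by differentiating $d\mathfrak{I}$ and reading off the coefficient forced by the Bianchi-type identity $d(d\beta)=0$, which expresses $\mathfrak{T}$ through $\overline{\mathcal{L}}(\overline{\mathfrak{I}})$, $\overline{P}\,\overline{\mathfrak{I}}$ and the fibre term ${\scriptstyle{\sqrt{-1}}}\,\tfrac{\mathsf{b}}{\mathsf{a}\overline{\mathsf{a}}}\,\overline{\mathfrak{I}}$ exactly as stated.

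\emph{The main obstacle} I anticipate is bookkeeping rather than conceptual: the absorption step is underdetermined (the ambiguity in the connection forms must be fixed by a consistent convention), and extracting the \emph{correct} normalized combination $\mathfrak{I}$ requires disentangling genuine essential torsion from absorbable terms at the precise order where the third-order derivatives of $P$ appear. Since $\mathcal{L}$ and $\overline{\mathcal{L}}$ do not commute---their bracket reintroduces $P$ via $[\mathcal{L},\overline{\mathcal{L}}]=-{\scriptstyle{\sqrt{-1}}}\,\mathcal{T}$ and $[\mathcal{L},\mathcal{T}]=P\,\mathcal{T}$---every reordering of the iterated derivatives generates lower-order corrections, and it is exactly the balancing of these commutator terms that produces the peculiar integer coefficients $-2,3,-7,4,-1,2$. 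Verifying that these coefficients are consistent (equivalently, that $\mathfrak{I}$ is well defined independently of the absorption choices) is the delicate heart of the computation; the remainder of the closure, including the derivation of $\mathfrak{T}$, follows mechanically from the Maurer--Cartan identities once $\mathfrak{I}$ is correctly isolated.
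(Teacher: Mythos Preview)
Your proposal is correct and follows essentially the same approach as the paper, which summarizes the procedure in the single phrase ``After two absorptions-normalizations and after one prolongation'' before stating the theorem (the detailed computations being deferred to the cited reference). Your identification of the two normalization rounds, the prolongation introducing $\delta$, the extraction of $\mathfrak{I}$ as surviving essential torsion in $d\beta$, and the derivation of $\mathfrak{T}$ from the $d^2=0$ closure conditions all match the Cartan-method architecture the paper invokes.
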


\medskip\noindent{\bf Explicitness obstacle.}
In terms of the function $P$, the formulas for $\mathfrak{ I}$, for
$\mathfrak{ T}$ and for the $1$-forms constituting the $\{
e\}$-structure are writable, but when expressing everything in
terms of the graphing function $\varphi$, because $P$ involves the
Levi factor $\ell$ in denominator place, formulas `explode'.

Indeed, the real and imaginary parts $\Delta_1$ and $\Delta_2$ in:
\[
\mathfrak{I}
=
\frac{4}{{\sf a}\overline{\sf 
a}^3}\big(\pmb{\Delta}_1
+
{\scriptstyle{\sqrt{-1}}}\,\pmb{\Delta}_4\big)
\]
have numerators containing respectively
(\cite{ Merker-Sabzevari-M3-C2}):
\[
{\bf 1\,553\,198}
\ \ \ \ \ \ \ \ \ \ \ \ \ \
\text{\rm and}
\ \ \ \ \ \ \ \ \ \ \ \ \ \
{\bf 1\,634\,457}
\]
monomials in the differential ring in $\binom{ 6 + 3}{ 3} - 1 = 83$
variables:
\[
\mathbb{Z}\big[
\varphi_x,\varphi_y,
\varphi_{x^2},\varphi_{y^2},\varphi_{u^2},
\varphi_{xy},\varphi_{xu},\varphi_{yu},\dots\dots,
\varphi_{x^6},\varphi_{y^6},\varphi_{u^6},\dots
\big].
\]
Strikingly, though, in the so-called {\sl rigid case} (often useful as
a case of study-exploration) where $\varphi = \varphi ( x, y)$ is
independent of $u$ so that:
\[
P
=
\frac{\varphi_{zz\overline{z}}}{\varphi_{z\overline{z}}},
\ \ \ \ \ \ \ \ \ \
\text{\rm Levi factor at denominator}
=
\ell
=
\varphi_{z\overline{z}},
\]
$\mathfrak{ I}$ is easily writable:
\[
\aligned
\mathfrak{I}
\bigg\vert_{
\substack{
\text{\sf rigid}\\
\text{\sf case}}}
&
=
\frac{1}{6}\,
\frac{1}{{\sf a}\overline{\sf a}^3}
\bigg(
\frac{\varphi_{z^2\overline{z}^4}}{\varphi_{z\overline{z}}}
-
6\,
\frac{\varphi_{z^2\overline{z}^3}\,\varphi_{z\overline{z}^2}}{
(\varphi_{z\overline{z}})^2}
-
\frac{\varphi_{z\overline{z}^4}\,\varphi_{z^2\overline{z}}}{
(\varphi_{z\overline{z}})^2}
-
4\,
\frac{\varphi_{z\overline{z}^3}\,\varphi_{z^2\overline{z}^2}}{
(\varphi_{z\overline{z}})^2}
+
\\
&
\ \ \ \ \ \ \ \ \ \ \ \ \ \ \ \ \ \ \
+
10\,
\frac{\varphi_{z\overline{z}^3}\,\varphi_{z^2\overline{z}}\,
\varphi_{z\overline{z}^2}}{
(\varphi_{z\overline{z}})^3}
+
15\,
\frac{(\varphi_{z\overline{z}^2})^2\,
\varphi_{z^2\overline{z}^2}}{
(\varphi_{z\overline{z}})^3}
-
15\,
\frac{(\varphi_{z\overline{z}^2})^3\,\varphi_{z^2\overline{z}}}{
(\varphi_{z\overline{z}})^4}
\bigg),
\endaligned
\]
and this therefore shows that {\em there is a spectacular contrast of
computational complexity when passing from the rigid case to the
general case}. The reason of this contrast mainly comes from the size of 
the Levi factor $\ell$ in~\thetag{ \ref{ell-denominator}} 
{\em appearing at denominator place in subsequent differentiations}.

\medskip

\medskip\noindent{\bf Class $\text{\sf IV}_{\text{\sf 2}}$ equivalences.}
Consider secondly a Class $\text{\sf IV}_{\text{\sf 2}}$ hypersurface 
$M^5 \subset \C^3$, and let a Levi-kernel adapted frame
for $\C \otimes_\R TM$ be:
\[
\aligned
&
\big\{
\mathcal{T},\,
\overline{\mathcal{L}}_1,\,
\overline{\mathcal{K}},\,
\mathcal{L}_1,\,
\mathcal{K}
\big\},
\ \ \ \ \ \ \ \ \ \
\ \ \ \ \ \ \ \ \ \
\overline{\mathcal{T}}
=
\mathcal{T},
\\
&
\mathcal{T}
:=
{\scriptstyle{\sqrt{-1}}}\,
\big[\mathcal{L}_1,\overline{\mathcal{L}}_1\big]
=
\ell\,\frac{\partial}{\partial u},
\ \ \ \ \ \ \ \ \ \
\ell
:=
{\scriptstyle{\sqrt{-1}}}\,\big(
\mathcal{L}_1\big(\overline{A}_1\big)
-
\overline{\mathcal{L}}_1(A_1)
\big).
\endaligned
\]

\begin{lemma}
{\rm (\cite{ Merker-5-CR, Pocchiola-2013})}
The initial Lie structure of this frame consists of $10 =
{\textstyle{\binom{5}{2}}}$ brackets:
\[
\aligned
\big[\mathcal{T},\overline{\mathcal{L}}_1\big]
&
=
-\,\overline{P}\cdot\mathcal{T},
\\
\big[\mathcal{T},\overline{\mathcal{K}}\big]
&
=
\overline{\mathcal{L}}_1\big(\overline{k}\big)
\cdot
\mathcal{T}
+
\mathcal{T}
\big(\overline{k}\big)
\cdot
\overline{\mathcal{L}}_1,
\\
\big[\mathcal{T},\mathcal{L}_1\big]
&
=
-\,P\cdot\mathcal{T},
\\
\big[\mathcal{T},\mathcal{K}\big]
&
=
\mathcal{L}_1(k)\cdot\mathcal{T}
+
\mathcal{T}(k)\cdot\mathcal{L}_1,
\\
\big[\overline{\mathcal{L}}_1,\overline{\mathcal{K}}\big]
&
=
\overline{\mathcal{L}}_1\big(\overline{k}\big)\cdot
\overline{\mathcal{L}}_1,
\\
\big[\overline{\mathcal{L}}_1,\mathcal{L}_1\big]
&
=
{\scriptstyle{\sqrt{-1}}}\,\mathcal{T},
\\
\big[\overline{\mathcal{L}}_1,\mathcal{K}\big]
&
=
\overline{\mathcal{L}}_1(k)
\cdot\mathcal{L}_1,
\\
\big[\overline{\mathcal{K}},\mathcal{L}_1\big]
&
=
-\,\mathcal{L}_1\big(\overline{k}\big)
\cdot\overline{\mathcal{L}}_1,
\\
\big[\overline{\mathcal{K}},\mathcal{K}\big]
&
=
0,
\\
\big[\mathcal{L}_1,\mathcal{K}\big]
&
=
\mathcal{L}_1(k)\cdot\mathcal{L}_1,
\endaligned
\]
in terms of the $2$ fundamental functions:
\[
k:=
-\,
\frac{
\mathcal{L}_2\big(\overline{A_1}\big)
-
\overline{\mathcal{L}}_1\big(A_2\big)
}{
\mathcal{L}_1\big(\overline{A_1}\big)
-
\overline{\mathcal{L}}_1\big(A_1\big)
},
\ \ \ \ \ \ \ \ \ \ \ \ \
P
:=
\frac{\ell_{z_1}+A_1\,\ell_u-\ell\,A_{1,u}}{
{\scriptstyle{\sqrt{-1}}}\,\big(
\mathcal{L}_1\big(\overline{A}_1\big)
-
\overline{\mathcal{L}}_1\big(A_1\big)
\big)
}
\]
(in~\cite{ Pocchiola-2013}, $M^5$ is graphed as $u = F ( x_1, y_1, x_2, y_2, 
v)$ instead, hence $P$ changes).
\end{lemma}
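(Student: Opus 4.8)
The plan is to obtain all ten brackets by a single first-order computation, exploiting at each stage the triangular shape of the five fields together with three facts already at our disposal: the Frobenius involutivity of $T^{1,0}M$ and $T^{0,1}M$, the involutiveness relations~\thetag{\ref{K-involutiveness}}, and, should a direct check be preferred over the Jacobi shortcut below, the surprising identities~\thetag{\ref{surprising-computational-fact}}. Throughout I use that $\mathcal{T}$ is real, so that the brackets split into conjugate pairs and only about half of them require genuine work.

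First I would record the Levi-type brackets. Because each $\mathcal{L}_i = \frac{\partial}{\partial z_i} + A_i\,\frac{\partial}{\partial u}$ and $\overline{\mathcal{L}}_j = \frac{\partial}{\partial\overline{z}_j} + \overline{A}_j\,\frac{\partial}{\partial u}$ carries the coefficient $1$ on its holomorphic slot, the bracket $[\mathcal{L}_i,\overline{\mathcal{L}}_j] = \big(\mathcal{L}_i(\overline{A}_j) - \overline{\mathcal{L}}_j(A_i)\big)\frac{\partial}{\partial u}$ is a pure multiple of $\frac{\partial}{\partial u}$, hence of $\mathcal{T} = \ell\,\frac{\partial}{\partial u}$. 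The definition of $\ell$ turns $[\mathcal{L}_1,\overline{\mathcal{L}}_1]$ into $-\sqrt{-1}\,\mathcal{T}$, so that $[\overline{\mathcal{L}}_1,\mathcal{L}_1] = \sqrt{-1}\,\mathcal{T}$, while the definition of $k$ turns $[\mathcal{L}_2,\overline{\mathcal{L}}_1]$ into $\sqrt{-1}\,k\,\mathcal{T}$. The same coefficient-$1$ phenomenon shows that $[\mathcal{L}_1,\mathcal{L}_2]$ has no $\frac{\partial}{\partial z}$-component, whence by involutivity of $T^{1,0}M$ it must vanish identically. Finally the definition of $P$ is tailored precisely so that $[\mathcal{L}_1,\mathcal{T}] = P\,\mathcal{T}$; conjugation gives $[\overline{\mathcal{L}}_1,\mathcal{T}] = \overline{P}\,\mathcal{T}$, and transposing signs yields $[\mathcal{T},\mathcal{L}_1] = -P\,\mathcal{T}$ and $[\mathcal{T},\overline{\mathcal{L}}_1] = -\overline{P}\,\mathcal{T}$.

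Next I would expand $\mathcal{K} = k\,\mathcal{L}_1 + \mathcal{L}_2$. Bilinearity together with $[\mathcal{L}_1,\mathcal{L}_2] = 0$ gives $[\mathcal{L}_1,\mathcal{K}] = \mathcal{L}_1(k)\,\mathcal{L}_1$, and conjugation gives $[\overline{\mathcal{L}}_1,\overline{\mathcal{K}}] = \overline{\mathcal{L}}_1(\overline{k})\,\overline{\mathcal{L}}_1$. In $[\overline{\mathcal{L}}_1,\mathcal{K}] = \overline{\mathcal{L}}_1(k)\,\mathcal{L}_1 + k\,[\overline{\mathcal{L}}_1,\mathcal{L}_1] + [\overline{\mathcal{L}}_1,\mathcal{L}_2]$ the two $\mathcal{T}$-proportional terms are $\sqrt{-1}\,k\,\mathcal{T}$ and $-\sqrt{-1}\,k\,\mathcal{T}$, and they cancel exactly because the coefficient of $\mathcal{T}$ in $[\mathcal{L}_2,\overline{\mathcal{L}}_1]$ equals $\sqrt{-1}\,k$, which is nothing but the definition of $k$; this leaves $[\overline{\mathcal{L}}_1,\mathcal{K}] = \overline{\mathcal{L}}_1(k)\,\mathcal{L}_1$, and conjugation produces $[\overline{\mathcal{K}},\mathcal{L}_1] = -\mathcal{L}_1(\overline{k})\,\overline{\mathcal{L}}_1$. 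The bracket $[\overline{\mathcal{K}},\mathcal{K}]$ is then settled at once: since $\mathcal{K}$ and $\overline{\mathcal{K}}$ carry the constant coefficient $1$ on $\frac{\partial}{\partial z_2}$, respectively $\frac{\partial}{\partial\overline{z}_2}$, the bracket $[\mathcal{K},\overline{\mathcal{K}}]$ has no $\frac{\partial}{\partial z_2}$- and no $\frac{\partial}{\partial\overline{z}_2}$-component; yet the third relation of~\thetag{\ref{K-involutiveness}} forces it into $\C\,\mathcal{K}\oplus\C\,\overline{\mathcal{K}}$, and the only such field lacking those two components is zero.

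There remain the two brackets coupling $\mathcal{T}$ and $\mathcal{K}$, and this is where I expect the main obstacle: a brute-force coefficient expansion would demand a nonobvious identity relating $P$, $k$ and the analogous $z_2$-factor $\big(\ell_{z_2} + A_2\,\ell_u - \ell\,A_{2,u}\big)/\ell$. I would sidestep it through the Jacobi identity. Applying Jacobi to the triple $(\mathcal{L}_1,\overline{\mathcal{L}}_1,\mathcal{K})$ and substituting the already-established $[\overline{\mathcal{L}}_1,\mathcal{K}] = \overline{\mathcal{L}}_1(k)\,\mathcal{L}_1$, $[\mathcal{K},\mathcal{L}_1] = -\mathcal{L}_1(k)\,\mathcal{L}_1$ and $[\mathcal{L}_1,\overline{\mathcal{L}}_1] = -\sqrt{-1}\,\mathcal{T}$, the two $\mathcal{L}_1$-terms collapse via $\mathcal{L}_1\big(\overline{\mathcal{L}}_1(k)\big) - \overline{\mathcal{L}}_1\big(\mathcal{L}_1(k)\big) = [\mathcal{L}_1,\overline{\mathcal{L}}_1](k) = -\sqrt{-1}\,\mathcal{T}(k)$, leaving $\mathcal{T}(k)\,\mathcal{L}_1 + \mathcal{L}_1(k)\,\mathcal{T} + [\mathcal{K},\mathcal{T}] = 0$, that is $[\mathcal{T},\mathcal{K}] = \mathcal{L}_1(k)\,\mathcal{T} + \mathcal{T}(k)\,\mathcal{L}_1$. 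Conjugation then delivers $[\mathcal{T},\overline{\mathcal{K}}] = \overline{\mathcal{L}}_1(\overline{k})\,\mathcal{T} + \mathcal{T}(\overline{k})\,\overline{\mathcal{L}}_1$, and with it the full list of ten brackets, the elusive relation among $P$, $k$ and the $z_2$-factor appearing a posteriori as the price of this Jacobi bootstrap rather than as an input.
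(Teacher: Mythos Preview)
Your argument is correct. Each of the ten brackets is obtained soundly: the Levi-type brackets and the definition of $P$ give $[\mathcal{T},\mathcal{L}_1]$, $[\mathcal{T},\overline{\mathcal{L}}_1]$, $[\overline{\mathcal{L}}_1,\mathcal{L}_1]$; the commutation $[\mathcal{L}_1,\mathcal{L}_2]=0$ together with bilinearity delivers $[\mathcal{L}_1,\mathcal{K}]$ and its conjugate; the cancellation in $[\overline{\mathcal{L}}_1,\mathcal{K}]$ is exactly the defining relation of $k$; the vanishing of $[\overline{\mathcal{K}},\mathcal{K}]$ follows from~\thetag{\ref{K-involutiveness}} and the absence of $\partial_{z_2},\partial_{\overline{z}_2}$-components; and your Jacobi bootstrap on $(\mathcal{L}_1,\overline{\mathcal{L}}_1,\mathcal{K})$ for $[\mathcal{T},\mathcal{K}]$ is clean and checks out line by line.

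The paper itself does not supply a proof of this lemma; it cites the external references~\cite{Merker-5-CR, Pocchiola-2013}, where the computations are carried out directly in coordinates. The pieces of bracket computation that do appear in the surrounding text (in the proof of the Freeman-form proposition) are done by brute-force expansion of the coefficient functions. Your route is therefore somewhat more structural: by invoking Jacobi for $[\mathcal{T},\mathcal{K}]$ you avoid having to verify by hand the identity relating $P$, $k$, and the analogous $z_2$-factor $(\ell_{z_2}+A_2\ell_u-\ell A_{2,u})/\ell$, which in the direct approach must be checked explicitly. The trade-off is that your argument uses the already-established brackets as input, so it is logically downstream of the coordinate facts $[\mathcal{L}_1,\mathcal{L}_2]=0$ and the precise coefficient of $[\mathcal{L}_2,\overline{\mathcal{L}}_1]$, whereas the cited references verify everything uniformly at the coordinate level.
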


Introduce then the coframe:
\[
\big\{
\rho_0,\,\overline{\kappa}_0,\,\overline{\zeta}_0,\,
\kappa_0,\,\zeta_0
\big\}
\]
which is dual to the frame:
\[
\big\{
\mathcal{T},\,
\overline{\mathcal{L}}_1,\,
\overline{\mathcal{K}},\,
\mathcal{L}_1,\,
\mathcal{K}
\big\},
\]
the notations being the same as in Proposition~\ref{L1-K-kappa-0}, 
that is to say:
\[
\aligned
\rho_0
&
=
\frac{du-A_1dz_1-A_2dz_2
-\overline{A}_1d\overline{z}_1
-\overline{A}_2d\overline{z}_2}{\ell},
\\
\kappa_0
&
=
dz_1-k\,dz_2,
\\
\zeta_0
&
=
dz_2.
\endaligned
\]
The initial Darboux structure is:
\[
\aligned 
d\rho_0
&
=
\overline{P}
\cdot
\rho_0\wedge\overline{\kappa}_0
-
\overline{\mathcal{L}}_1\big(\overline{k}\big)
\cdot
\rho_0\wedge\overline{\zeta}_0
+
P
\cdot
\rho_0\wedge\kappa_0
-
\mathcal{L}_1(k)
\cdot
\rho_0\wedge\zeta_0
+
{\scriptstyle{\sqrt{-1}}}\,
\kappa_0\wedge\overline{\kappa}_0,
\\
d\overline{\kappa}_0
&
=
-\,\mathcal{T}\big(\overline{k}\big)
\cdot
\rho_0\wedge\overline{\zeta}_0
-
\overline{\mathcal{L}}_1\big(\overline{k}\big)
\cdot
\overline{\kappa}_0
\wedge
\overline{\zeta}_0
+
\mathcal{L}_1\big(\overline{k}\big)
\cdot
\overline{\zeta}_0
\wedge
\kappa_0,
\\
d\overline{\zeta}_0
&
=
0,
\\
d\kappa_0
&
=
-\,\mathcal{T}(k)
\cdot
\rho_0\wedge\zeta_0
-
\overline{\mathcal{L}}_1(k)
\cdot
\overline{\kappa}_0\wedge\zeta_0
-
\mathcal{L}_1(k)
\cdot
\kappa_0\wedge\zeta_0,
\\
d\zeta_0
&
=
0.
\endaligned
\]
The initial associated $G$-structure is:
\[
{\sf G}_{\text{\sf IV}_{\text{\sf 2}}}^{\sf initial}
\,:=\,
\left\{
\left(\!\!
\begin{array}{ccccc}
{\sf c} & 0 & 0 & 0 & 0
\\
{\sf b} & {\sf a} & 0 & 0 & 0
\\
0 & 0 & \overline{\sf c} & 0 & 0
\\
0 & 0 & \overline{\sf b} & \overline{\sf a} & 0
\\
{\sf e} & {\sf d} & \overline{\sf e} & \overline{\sf d} & 
{\sf a}\overline{\sf a}
\end{array}
\!\!\right)
\,\in\,
\mathcal{M}_{5\times 5}(\C)\,\colon\,\,
{\sf a},\,{\sf c}\,\in\,\C\backslash\{0\},\,\,
{\sf b},\,{\sf d},\,{\sf e}
\,\in\,\C
\right\}.
\]

\begin{theorem}
\label{Pocchiola-IV-2}
{\rm ({\sc Pocchiola}, \cite{ Pocchiola-2013})} Two fundamental explicit
invariants both having denominators related to the nondegeneracy 
of the Freeman form:
\[
\aligned
W
&
:=
\frac{2}{3}\,
\frac{\mathcal{L}_1\big(\overline{\mathcal{L}}_1(k)\big)}
{\overline{\mathcal{L}}_1(k)}
+
\frac{2}{3}\,
\frac{\mathcal{L}_1\big(\mathcal{L}_1(\overline{k})\big)}
{\mathcal{L}_1(\overline{k})}
+
\\
&
\ \ \ \ \
+
\frac{1}{3}\,
\frac{\overline{\mathcal{L}}_1\big(\overline{\mathcal{L}}_1(k)\big)\,
\mathcal{K}\big(\overline{\mathcal{L}}_1(k)\big)}
{\overline{\mathcal{L}}_1(k)^3}
-
\frac{1}{3}\,
\frac{\mathcal{K}\big(\overline{\mathcal{L}}_1
\big(\overline{\mathcal{L}}_1(k)\big)\big)}
{\overline{\mathcal{L}}_1(k)^2}
+
\frac{i}{3}\,
\frac{\mathcal{T}(k)}
{\overline{\mathcal{L}}_1(k)},
\endaligned
\]
and:
\[
\aligned
J
&
:=
\frac{5}{18}\,
\frac{\mathcal{L}_1\big(\mathcal{L}_1(\overline{k})\big)^2}
{\mathcal{L}_1(\overline{k})^2}
+
\frac{1}{3}\,
P\,\mathcal{L}_1(P)
-
\frac{1}{9}\,P^2\,
\frac{\mathcal{L}_1\big(\mathcal{L}_1(\overline{k})\big)}
{\mathcal{L}_1(\overline{k})}
+
\frac{20}{27}\,
\frac{\mathcal{L}_1\big(\mathcal{L}_1(\overline{k})\big)^3}
{\mathcal{L}_1(\overline{k})^3}
\,-
\\
&
\ \ \ \ \
-\,
\frac{5}{6}\,
\frac{\mathcal{L}_1\big(\mathcal{L}_1(\overline{k})\big)\,
\mathcal{L}_1\big(\mathcal{L}_1\big(\mathcal{L}_1
(\overline{k})\big)\big)}
{\mathcal{L}_1(\overline{k})^2}
+
\frac{1}{6}\,
\frac{\mathcal{L}_1\big(\mathcal{L}_1(\overline{k})\big)\,
\mathcal{L}_1(P)}
{\mathcal{L}_1(\overline{k})}
\,-
\\
&
\ \ \ \ \
-\,
\frac{1}{6}\,
P\,
\frac{\mathcal{L}_1\big(\mathcal{L}_1\big(\mathcal{L}_1
(\overline{k})\big)\big)}
{\mathcal{L}_1(\overline{k})}
\,-
\frac{2}{27}\,P^3
-
\frac{1}{6}\,
\mathcal{L}_1\big(\mathcal{L}_1(P)\big)
+
\frac{1}{6}\,
\frac{\mathcal{L}_1\big(\mathcal{L}_1\big(\mathcal{L}_1\big(\mathcal{L}_1
(\overline{k})\big)\big)\big)}
{\mathcal{L}_1(\overline{k})},
\endaligned
\]
occur in the local biholomorphic equivalence problem for Class $\text{\sf
IV}_{\text{\sf 2}}$ real analytic hypersurfaces $M^5 \subset \C^3$.
Such a $M^5$ is locally biholomorphically equivalent to the light cone
model:
\def\theequation{${\sf LC}$}
\begin{equation}
u
=
\frac{z_1\overline{z}_1+
\frac{1}{2}\,z_1z_1\overline{z}_2
+
\frac{1}{2}\,z_2\overline{z}_1\overline{z}_1}{
1-z_2\overline{z}_2}
\underset{\text{\rm locally}\atop
\text{\rm by~\cite{Fels-Kaup-2007}}}{\cong}
({\sf Re}\,z_1')^2
-
({\sf Re}\,z_2')^2
-
({\sf Re}\,z_3')^2,
\end{equation}
having $10$-dimensional symmetry group ${\sf Aut}_{ CR} 
({\sf LC}) \cong {\sf Sp}( 4, \R)$, 
if and only if:
\[
0
\equiv
W
\equiv
J.
\]
Moreover, if either $W \not\equiv 0$ or $J \not\equiv 0$, 
an absolute parallelism is constructed on $M$ (after
relocalization), and in this
case, the local Lie group of $\mathcal{ C}^\omega$ CR automorphisms
of $M$ always has:
\[
{\sf dim}\,
{\sf Aut}_{CR}(M)
\,\leqslant\,
5.
\]
\end{theorem}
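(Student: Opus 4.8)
The plan is to carry out \'Elie Cartan's method of equivalence for the initial $G$-structure ${\sf G}_{\text{\sf IV}_{\text{\sf 2}}}^{\sf initial}$ acting on the coframe $\{\rho_0,\overline{\kappa}_0,\overline{\zeta}_0,\kappa_0,\zeta_0\}$, taking as input the initial Darboux structure recorded above in terms of the two fundamental functions $k$ and $P$. First I would form the lifted coframe $(\rho,\overline{\kappa},\overline{\zeta},\kappa,\zeta)$ by applying the five complex group parameters ${\sf a},{\sf b},{\sf c},{\sf d},{\sf e}$, then compute the exterior derivatives $d\rho,\,d\overline{\kappa},\dots,\,d\zeta$ and collect the resulting torsion coefficients. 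The defining hypothesis of Class $\text{\sf IV}_{\text{\sf 2}}$, namely nondegeneracy of the Freeman form, guarantees that $\overline{\mathcal{L}}_1(k)$ and $\mathcal{L}_1(\overline{k})$ vanish nowhere, so that, in accordance with the General principle recalled earlier, these quantities are legitimately available as denominators at every subsequent step.

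Next I would run the standard absorption--normalization loop, interleaved with prolongation wherever the reduction stalls. At each stage one absorbs the absorbable part of the torsion into the Maurer--Cartan forms carried by the group parameters, and normalizes each essential (non-absorbable) torsion coefficient to a constant, thereby fixing one parameter after another; any group-dependent coefficient that cannot be normalized away is, by construction, a local biholomorphic invariant. The two primary invariants that survive this reduction are precisely $W$ and $J$ in the form displayed, their denominators $\overline{\mathcal{L}}_1(k)$ and $\mathcal{L}_1(\overline{k})$ being exactly the Freeman-form factors invoked at the corresponding normalizations.

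I would then treat the flat branch first. Imposing $W\equiv J\equiv 0$ forces every remaining essential torsion coefficient, and hence every secondary invariant built from $W$, $J$ and their frame-derivatives, to vanish, so that the reduced structure equations close up with constant coefficients, i.e.\ become the Maurer--Cartan equations of a fixed $10$-dimensional Lie algebra. Matching these against the symmetry algebra of the light cone model ${\sf LC}$, whose automorphism group ${\sf Aut}_{CR}({\sf LC})\cong{\sf Sp}(4,\R)$ indeed has dimension $10$, and invoking Cartan's equivalence theorem for coframes whose structure equations are those of a Lie group, yields the local biholomorphism $M^5\cong{\sf LC}$. This establishes the implication $0\equiv W\equiv J\ \Rightarrow\ M^5\cong{\sf LC}$.

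For the non-flat branch, suppose $W\not\equiv 0$ or $J\not\equiv 0$. After relocalizing to a point where suitably many frame-derivatives of $W$ and $J$ are independent and nonzero, I can normalize the parameters that remained free in the flat branch, collapsing the bundle onto $M^5$ itself and producing an absolute parallelism, that is, an $\{e\}$-structure on $M^5$. Any local CR automorphism of $M$ then preserves this coframe, hence is pinned down by its value at a single point, and the classical bound for the symmetry group of an $\{e\}$-structure on a $5$-dimensional manifold gives ${\sf dim}\,{\sf Aut}_{CR}(M)\leqslant 5$. The converse implication $M^5\cong{\sf LC}\ \Rightarrow\ 0\equiv W\equiv J$ now follows by contraposition: since ${\sf dim}\,{\sf Aut}_{CR}({\sf LC})=10>5$, the bound just proved could not hold unless both $W$ and $J$ vanished identically. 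The genuine obstacle throughout is computational rather than conceptual: as the Explicitness obstacle noted above already reveals for $M^3\subset\C^2$, expressing every normalization through the graphing function $\varphi$ makes the Levi factor $\ell$ and the Freeman-form denominators proliferate under repeated differentiation by $\mathcal{L}_1,\overline{\mathcal{L}}_1,\mathcal{K},\mathcal{T}$, so the real difficulty is to organize the reduction so that $W$ and $J$ emerge in the compact rational shape above rather than in a fully expanded, unmanageable form.
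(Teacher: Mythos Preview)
The paper does not itself prove this theorem: it is stated with attribution to Pocchiola~\cite{Pocchiola-2013}, and the surrounding text only records the initial Darboux structure and the initial $G$-structure ${\sf G}_{\text{\sf IV}_{\text{\sf 2}}}^{\sf initial}$ as setup, then passes to consequences. So there is no in-paper proof to compare against in detail; what can be compared is whether your outline matches the method the paper clearly intends, namely Cartan's parametric equivalence procedure applied to that $G$-structure.

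On that score your roadmap is the right one and is consistent with how the paper treats the parallel Classes~$\text{\sf I}$ and~$\text{\sf III}_{\text{\sf 1}}$: lift the coframe by the group parameters, absorb and normalize torsion, prolong if necessary, and read off $W$ and $J$ as the surviving essential invariants with Freeman-form denominators. Two points deserve a caveat. First, your sentence ``Imposing $W\equiv J\equiv 0$ forces every remaining essential torsion coefficient\dots to vanish'' is not automatic: one must actually verify that all secondary curvatures arising at later loops are differential-rational expressions in $W$, $J$ and their frame derivatives, which is a nontrivial computation in Pocchiola's 56-page paper, not a formal consequence of $W$ and $J$ being primary. Second, the non-flat branch is not a single case: depending on whether $W\not\equiv 0$ or $J\not\equiv 0$ (and on further sub-branching), different group parameters get normalized by different essential torsions, and one must check in each sub-branch that the reduction terminates on $M^5$ rather than on a larger bundle. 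Your contrapositive argument for the converse and the $\dim\leqslant 5$ bound via the $\{e\}$-structure are correct as stated.
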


The latter dimension drop was obtained by Fels-Kaup (\cite{
Fels-Kaup-2008}) under the assumption that ${\sf Aut}_{ CR} (M)$ is
locally transitive, while Cartan's method
embraces {\em all} Class $\text{\sf IV}_{\text{\sf 2}}$ 
hypersurfaces $M^5 \subset \C^3$. 

Reduction to an absolute parallelism on a $10$-dimensional bundle $N^{
10} \to M$ has been obtained previously by Isaev-Zaitsev (\cite{
Isaev-Zaitsev-2013}) and Medori-Spiro (\cite{Medori-Spiro-2012}).
The explicitness of $W$ and $J$, the equivalence bifurcation $W \not
\equiv 0$ or $J \not \equiv 0$ and the dimension drop $10 \to 5$
provide a complementary aspect. Furthermore, here is an application
of the explicit rational expressions of $J$ and $W$ in the spirit of
Theorem~\ref{propagation-sphericity}.

\begin{corollary}
Let $M^5 \subset \C^3$ be a connected $\mathcal{ C}^\omega$
hypersurface whose Levi form is of rank $1$ at Zariski-generic points,
possibly of rank $0$ somewhere, and whose Freeman form is also
nondegenerate at Zariski-generic points. If $M$ is locally
biholomorphic to the light cone model $({\sf LC})$ at some Freeman
nondegenerate point, then $M$ is also locally biholomorphic to
$({\sf LC})$ at every other Freeman nondegenerate
point.
\end{corollary}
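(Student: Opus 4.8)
The plan is to transplant the propagation mechanism of Theorem~\ref{propagation-sphericity} to Class $\text{\sf IV}_{\text{\sf 2}}$, using the two explicit rational invariants $W$ and $J$ of Theorem~\ref{Pocchiola-IV-2} in the role played there by ${\sf AJ}^4(\Theta)$. The key structural fact, in the spirit of the \emph{General principle} that geometric nondegeneracy enters at denominator place, is that the denominators of $W$ and $J$ are built solely from the Levi factor $\ell$ and from the Freeman-type quantities $\overline{\mathcal{L}}_1(k)$ and $\mathcal{L}_1(\overline{k}) = \overline{\overline{\mathcal{L}}_1(k)}$, so that they vanish precisely off the locus of Levi-rank-$1$, Freeman-nondegenerate points.

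First I would work at a Levi-rank-$1$ point in the Levi-kernel adapted frame $\{\mathcal{L}_1,\mathcal{K}\}$ of the discussion surrounding Proposition~\ref{L1-K-kappa-0}, in which $k$ and $P$ are rational functions of the finite jet of the graphing function $\varphi$ coming from Lemma~\ref{L-explicit}. Substituting these --- together with their iterated $\mathcal{L}_1,\overline{\mathcal{L}}_1,\mathcal{K},\mathcal{T}$-derivatives --- into the formulas for $W$ and $J$ and clearing all denominators, I expect expressions of the form
\[
W
=
\frac{{\sf num}_W}{
\ell^a\,\big[\overline{\mathcal{L}}_1(k)\big]^b\,
\big[\mathcal{L}_1(\overline{k})\big]^c},
\qquad
J
=
\frac{{\sf num}_J}{
\ell^{a'}\,\big[\overline{\mathcal{L}}_1(k)\big]^{b'}\,
\big[\mathcal{L}_1(\overline{k})\big]^{c'}},
\]
where ${\sf num}_W$ and ${\sf num}_J$ are universal polynomials in a finite jet of $\varphi$, hence genuine $\mathcal{C}^\omega$ functions defined on \emph{all} of $M$, while the denominators are nonvanishing exactly at the Levi-rank-$1$, Freeman-nondegenerate points.

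Granting this, the conclusion follows exactly as in Theorem~\ref{propagation-sphericity}. By Theorem~\ref{Pocchiola-IV-2}, local biholomorphism to $({\sf LC})$ at the given Freeman-nondegenerate point $p$ forces $0 \equiv W \equiv J$ throughout a neighborhood of $p$; since the denominators are nonzero there, ${\sf num}_W$ and ${\sf num}_J$ vanish on an open subset of $M$. Because $M$ is connected and both numerators are real analytic on the whole of $M$, the identity principle propagates their vanishing to all of $M$, \emph{including} across the proper real analytic subset of Levi- or Freeman-degenerate points where the denominators themselves vanish. Then at any other Freeman-nondegenerate point $q$ the denominators are again nonzero nearby, whence $W \equiv J \equiv 0$ in a neighborhood of $q$, and a second application of Theorem~\ref{Pocchiola-IV-2} delivers local biholomorphism to $({\sf LC})$ at $q$. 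As in the sphericity case, small translations of coordinates and a relocalization of the adapted frame at $q$ are required; these are harmless, since the \emph{vanishing} of $W$ and $J$ is a biholomorphic invariant independent of the frame and coordinate choices.

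The main obstacle is the clearing-of-denominators bookkeeping underlying the displayed formulas: one must verify that, after $k$, $P$ and all their relevant derivatives are rewritten in terms of $\varphi$, \emph{every} denominator occurring is a power of $\ell$, $\overline{\mathcal{L}}_1(k)$ or $\mathcal{L}_1(\overline{k})$ and nothing else, so that ${\sf num}_W$ and ${\sf num}_J$ extend real analytically across the degenerate set instead of acquiring spurious poles. This is the exact analogue of the remark following Theorem~\ref{explicit-sphericity} that $\Theta_{\overline{w}}$ ``also enters denominator, but erases in the equation $=0$''; it is precisely this rationality feature that powers the analytic continuation and hence the propagation.
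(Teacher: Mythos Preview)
Your proposal is correct and follows precisely the approach the paper intends: the corollary is stated as ``an application of the explicit rational expressions of $J$ and $W$ in the spirit of Theorem~\ref{propagation-sphericity}'', with no further proof given, and you have accurately reconstructed that mechanism---clearing denominators to obtain real-analytic numerators globally defined on $M$, propagating their vanishing by the identity principle across the degenerate locus, and invoking Theorem~\ref{Pocchiola-IV-2} again at the target point. Your identification of the denominator structure (powers of $\ell$, of $\overline{\mathcal{L}}_1(k)$, and of its conjugate $\mathcal{L}_1(\overline{k})$, plus the harmless nowhere-vanishing factor $i+\varphi_u$) matches the explicit formulas, and your closing caveat about the bookkeeping is exactly the analogue of the ``small translations of coordinates are needed; the complete arguments appear in~\cite{Merker-2013-transfer}'' remark in the sphericity case.
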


\medskip\noindent{\bf Equivalences of remaining Classes $\text{\sf II}$,
$\text{\sf III}_{\text{\sf 1}}$, $\text{\sf III}_{\text{\sf 2}}$,
$\text{\sf IV}_{\text{\sf 1}}$.} Class $\text{\sf II}$ has been
treated optimally by Beloshapka-Ezhov-Schmalz (\cite{
Beloshapka-Ezhov-Schmalz-2007}) who directly constructed a Cartan
connection on a principal bundle $N^5 \to M^4$ with fiber $\cong \R$.
Recently, Pocchiola (\cite{ Pocchiola-2014a}) provided
an alternative construction the elements of which are
computed deeper. 

\smallskip

Class $\text{\sf IV}_{\text{\sf 1}}$ is reduced to an absolute
parallelism with a Cartan connection
by Chern-Moser (\cite{ Chern-Moser-1974}) inspired by
Hachtroudi (\cite{ Hachtroudi-1937}), though not explicity in terms
of a local graphing function (question still open).

\smallskip

Recently, jointly with Sabzevari, Class $\text{\sf III}_{\text{\sf 1}}$
has been recently settled. Beloshapka ({\em see} also~\cite{
Improved-2013}), proved that the Lie algebra $\mathfrak{ aut}_{ CR}
= 2\, {\rm Re}\, \mathfrak{hol}$ of ${\rm Aut}_{ CR}$ of the cubic:
\[
{\text{\small\sf Model 
$\text{\sf III}_{\text{\sf 1}}$:}}
\ \ \ \ \ \ \ \ \ 
v_1
=
z\overline{z},
\ \ \ \ \ \ \ \ \
v_2
=
z^2\overline{z}+z\overline{z}^2,
\ \ \ \ \ \ \ \ \
v_3
=
{\scriptstyle{\sqrt{-1}}}\,\big(z^2\overline{z}-z\overline{z}^2\big),
\]
is $7$-dimensional generated by:
\[
\aligned T & :=
\partial_{w_1},
\\
S_1 & :=
\partial_{w_2},
\\
S_2 & :=
\partial_{w_3},
\\
 L_1 & :=
\partial_z
+ (2{\scriptstyle{\sqrt{-1}}}\, z)\,\partial_{w_1} + (2{\scriptstyle{\sqrt{-1}}}\,
z^2+4w_1)\,\partial_{w_2} +
2z^2\,\partial_{w_3},
\\
L_2 & := {\scriptstyle{\sqrt{-1}}}\,\partial_z + (2\,z)\,\partial_{w_1} +
(2z^2)\,\partial_{w_2} - (2{\scriptstyle{\sqrt{-1}}}\,z^2-4w_1)\,\partial_{w_3},
\\
D & := z\,\partial_z + 2w_1\,\partial_{w_1} + 3w_2\,\partial_{w_2} +
3w_3\,\partial_{w_3},
\\
R & := {\scriptstyle{\sqrt{-1}}}\,z\,\partial_z - w_3\,\partial_{w_2} + w_2\,\partial_{w_3}.
\endaligned
\]

Hence for a CR-generic
$M^5 \subset \C^4$ belonging to Class
$\text{\sf III}_{\text{\sf 1}}$ graphed as: 
\[
v_1
=
\varphi_1(x,y,u_1,u_2,u_3),
\ \ \ \ \ \ \ \ \ \ \ \ \ \ \
v_2
=
\varphi_2(x,y,u_1,u_2,u_3),
\ \ \ \ \ \ \ \ \ \ \ \ \ \ \
v_3
=
\varphi_3(x,y,u_1,u_2,u_3),
\]
reduction to an absolute parallelism on a certain bundle $N^7 \to M^5$
can be expected, and in fact, similarly as in
Theorem~\ref{Pocchiola-IV-2}, finer equivalence bifurcations will
occur.

Lemma~\ref{L-explicit} showed that a generator for $T^{1,0} M$ is:
\[
\mathcal{L}
=
\frac{\partial}{\partial z}
+
\frac{\Lambda_1}{\Delta}\,\frac{\partial}{\partial u_1}
+
\frac{\Lambda_2}{\Delta}\,\frac{\partial}{\partial u_2}
+
\frac{\Lambda_3}{\Delta}\,\frac{\partial}{\partial u_3},
\]
where:
\[
\Delta
:=
\left\vert\!
\begin{array}{ccc}
{\scriptstyle{\sqrt{-1}}}+\varphi_{1,u_1} & \varphi_{1,u_2} & \varphi_{1,u_3}\\
\varphi_{2,u_1} & {\scriptstyle{\sqrt{-1}}}+\varphi_{2,u_2} & \varphi_{2,u_3}\\
\varphi_{3,u_1} & \varphi_{3,u_2} & {\scriptstyle{\sqrt{-1}}}+\varphi_{3,u_3}
\end{array}
\!\right\vert,
\]
and where:
\[
\Lambda_1
:=
\left\vert\!
\begin{array}{ccc}
-\,\varphi_{1,z} & \varphi_{1,u_2} & \varphi_{1,u_3}\\
-\,\varphi_{2,z} & {\scriptstyle{\sqrt{-1}}}+\varphi_{2,u_2} & \varphi_{2,u_3}\\
-\,\varphi_{3,z} & \varphi_{3,u_2} & {\scriptstyle{\sqrt{-1}}}+\varphi_{3,u_3}
\end{array}
\!\right\vert,
\]
with similar $\Lambda_2$, $\Lambda_3$. By definition, on a Class
$\text{\sf III}_{\text{\sf 1}}$ CR-generic $M^5 \subset \C^4$ the
fields:
\[
\Big\{ \overline{\mathcal{S}},\, \mathcal{S},\, \mathcal{T},\,
\overline{\mathcal{L}},\, \mathcal{L} \Big\},
\]
where:
\[
\mathcal{T} 
:=
{\scriptstyle{\sqrt{-1}}}\,\big[\mathcal{L},\overline{\mathcal{L}}\big],
\ \ \ \ \ \ \ \ \ \ \ \
\mathcal{S} 
:= 
\big[\mathcal{L},\,\mathcal{T}\big],
\ \ \ \ \ \ \ \ \ \ \ \
\overline{\mathcal{S}} 
:=
\big[\overline{\mathcal{L}},\,\mathcal{T}\big],
\]
make up a frame for $\C \otimes_\R TM^5$.
Computing these (iterated) Lie brackets, there are certain
coefficient-polynomials:
\[
\aligned
\Upsilon_i
&
=
\Upsilon_i
\Big(
\varphi_{1,x^jy^ku_1^{l_1}u_2^{l_2}u_3^{l_3}},\,\,
\varphi_{2,x^jy^ku_1^{l_1}u_2^{l_2}u_3^{l_3}},\,\,
\varphi_{3,x^jy^ku_1^{l_1}u_2^{l_2}u_3^{l_3}}
\Big)_{1\leqslant j+k+l_1+l_2+l_3\leqslant 2}
\ \ \ \ \ \ \ 
{\scriptstyle{(i\,=\,1,\,2,\,3)}}
,
\\
\Pi_i
&
=
\Pi_i
\Big(
\varphi_{1,x^jy^ku_1^{l_1}u_2^{l_2}u_3^{l_3}},\,\,
\varphi_{2,x^jy^ku_1^{l_1}u_2^{l_2}u_3^{l_3}},\,\,
\varphi_{3,x^jy^ku_1^{l_1}u_2^{l_2}u_3^{l_3}}
\Big)_{1\leqslant j+k+l_1+l_2+l_3\leqslant 3}
\ \ \ \ \ \ \ 
{\scriptstyle{(i\,=\,1,\,2,\,3)}},
\endaligned
\]
so that (mind exponents in denominators):
\[
\aligned
\mathcal{T}
&
=
\frac{\Upsilon_1}{\Delta^2\,\overline{\Delta}^2}\,
\frac{\partial}{\partial u_1}
+
\frac{\Upsilon_2}{\Delta^2\,\overline{\Delta}^2}\,
\frac{\partial}{\partial u_2}
+
\frac{\Upsilon_3}{\Delta^2\,\overline{\Delta}^2}\,
\frac{\partial}{\partial u_3},
\\
\mathcal{S}
&
=
\frac{\Pi_1}{\Delta^4\,\overline{\Delta}^3}\,
\frac{\partial}{\partial u_1}
+
\frac{\Pi_2}{\Delta^4\,\overline{\Delta}^3}\,
\frac{\partial}{\partial u_2}
+
\frac{\Pi_3}{\Delta^4\,\overline{\Delta}^3}\,
\frac{\partial}{\partial u_3},
\\
\overline{\mathcal{S}}
&
=
\frac{\overline{\Pi}_1}{\Delta^3\,\overline{\Delta}^4}\,
\frac{\partial}{\partial u_1}
+
\frac{\overline{\Pi}_2}{\Delta^3\,\overline{\Delta}^4}\,
\frac{\partial}{\partial u_2}
+
\frac{\overline{\Pi}_3}{\Delta^3\,\overline{\Delta}^4}\,
\frac{\partial}{\partial u_3}.
\endaligned
\]

\medskip\noindent{\bf Explicitness obstacle.}
The expansions of $\Upsilon_1$, $\Upsilon_2$, $\Upsilon_3$ as
polynomials in their $3\cdot 20$ variables incorporate $41\,964$
monomials
while those of $\Pi_1$, $\Pi_2$, $\Pi_3$ as polynomials in their $3
\cdot 55$ variables would incorporate more than (no
computer succeeded) $100\,000\,000$ terms.

\smallskip

{\em Hence renouncement to full expliciteness is necessary.}

\smallskip

Between the $5$ fields
$\big\{ \overline{ \mathcal{ S}}, \mathcal{ S}, \mathcal{ T},
\overline{ \mathcal{ L}}, \mathcal{ L} \big\}$, there are $10 =
\binom{ 5}{ 2}$ Lie brackets. Assign therefore formal names to the
uncomputable appearing coefficient-functions.

\begin{lemma}
(\cite{ Merker-Sabzevari-5-cubic})
In terms of $5$ fundamental coefficient-functions:
\[
P,\ \ \ \ \
Q,\ \ \ \ \
R,\ \ \ \ \
A,\ \ \ \ \
B,
\]
the 10 Lie bracket relations write as:
\[
\aligned
\big[\overline{\mathcal{S}},\mathcal{S}\big]
&
=
\overline{K}_{\sf rpl}\cdot\overline{\mathcal{S}}
-
K_{\sf rpl}\cdot\mathcal{S}
-
{\scriptstyle{\sqrt{-1}}}\,J_{\sf rpl}\cdot\mathcal{T},
\\
\big[\overline{\mathcal{S}},\mathcal{T}\big]
&
=
-\,
\overline{F}_{\sf rpl}\cdot\overline{\mathcal{S}}
-
\overline{G}_{\sf rpl}\cdot\mathcal{S}
-
\overline{E}_{\sf rpl}\cdot\mathcal{T},
\\
\big[\overline{\mathcal{S}},\overline{\mathcal{L}}\big]
&
=
-\,
\overline{Q}\cdot\overline{\mathcal{S}}
-
\overline{R}\cdot\mathcal{S}
-
\overline{P}\cdot\mathcal{T},
\\
\big[\overline{\mathcal{S}},\mathcal{L}\big]
&
=
-\,
\overline{B}\cdot\overline{\mathcal{S}}
-
B\cdot\mathcal{S}
-
A\cdot\mathcal{T},
\\
\big[\mathcal{S},\,\mathcal{T}\big]
&
=
-\,
G_{\sf rpl}\cdot\overline{\mathcal{S}}
-
F_{\sf rpl}\cdot\mathcal{S}
-
E_{\sf rpl}\cdot\mathcal{T},
\\
\big[\mathcal{S},\,\overline{\mathcal{L}}\big]
&
=
-\,
\overline{B}\cdot\overline{\mathcal{S}}
-
B\cdot\mathcal{S}
-
A\cdot\mathcal{T},
\\
\big[\mathcal{S},\,\mathcal{L}\big]
&
=
-\,
R\cdot\overline{\mathcal{S}}
-
Q\cdot\mathcal{S}
-
P\cdot\mathcal{T},
\\
\big[\mathcal{T},\,\overline{\mathcal{L}}\big]
&
=
-\,
\overline{\mathcal{S}},
\\
\big[\mathcal{T},\,\mathcal{L}\big]
&
=
\,-\,\mathcal{S},
\\
\big[\overline{\mathcal{L}},\,\mathcal{L}\big]
&
=
{\scriptstyle{\sqrt{-1}}}\,\mathcal{T},\,\,
\endaligned
\]
the coefficient-functions $E_{\sf rpl}$, $G_{\sf rpl}$, $H_{\sf rpl}$,
$J_{\sf rpl}$, $K_{\sf rpl}$ being secondary:
\[
\aligned
E_{\sf rpl}
&
=
{\scriptstyle{\sqrt{-1}}}\,
\Big(
\mathcal{L}(A)
-
\overline{\mathcal{L}}(P)
+
A\overline{B}
+
BP
-
AQ
-
\overline{P}R
\Big),
\\
F_{\sf rpl}
&
=
{\scriptstyle{\sqrt{-1}}}\,
\Big(
\mathcal{L}(B)
-
\overline{\mathcal{L}}(Q)
+
A
+
B\overline{B}
-
R\overline{R}
\Big),
\\
G_{\sf rpl}
&
=
{\scriptstyle{\sqrt{-1}}}\,
\Big(
\mathcal{L}(\overline{B})
-
\overline{\mathcal{L}}(R)
+
\overline{B}\overline{B}
+
BR
-
P
-
\overline{B}Q
-
R\overline{Q}
\Big),
\endaligned
\]
with similar, longer expressions for $J_{\sf rpl}$, $K_{\sf rpl}$.
\end{lemma}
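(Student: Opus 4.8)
My plan is to exploit one structural observation: every one of the ten brackets is \emph{vertical}. Since $\mathcal{T}$, $\mathcal{S}$, $\overline{\mathcal{S}}$ involve only $\partial/\partial u_1$, $\partial/\partial u_2$, $\partial/\partial u_3$, whereas $\mathcal{L} = \partial/\partial z + (\text{vertical})$ and $\overline{\mathcal{L}} = \partial/\partial\overline{z} + (\text{vertical})$, and since the coordinate distribution $V := \mathrm{span}\{\partial/\partial u_1, \partial/\partial u_2, \partial/\partial u_3\}$ is trivially Frobenius-integrable, each bracket of two frame fields lands in $V = \mathrm{span}\{\mathcal{T}, \mathcal{S}, \overline{\mathcal{S}}\}$. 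This is exactly why no bracket carries an $\mathcal{L}$- or $\overline{\mathcal{L}}$-component, and it guarantees a priori that each of the ten expansions is a $\mathcal{C}^\omega$-combination of $\overline{\mathcal{S}}$, $\mathcal{S}$, $\mathcal{T}$ alone. I would then sort the relations into three tiers: the definitional brackets, the brackets that \emph{define} the primary functions, and the brackets whose coefficients are forced by the Jacobi identity.

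First I would record the three definitional brackets. From $\mathcal{T} := \sqrt{-1}\,[\mathcal{L}, \overline{\mathcal{L}}]$ one reads $[\overline{\mathcal{L}}, \mathcal{L}] = \sqrt{-1}\,\mathcal{T}$, and $\mathcal{S} := [\mathcal{L}, \mathcal{T}]$, $\overline{\mathcal{S}} := [\overline{\mathcal{L}}, \mathcal{T}]$ give $[\mathcal{T}, \mathcal{L}] = -\mathcal{S}$ and $[\mathcal{T}, \overline{\mathcal{L}}] = -\overline{\mathcal{S}}$. Next I would \emph{define} the five primary functions through $[\mathcal{S}, \mathcal{L}] = -R\,\overline{\mathcal{S}} - Q\,\mathcal{S} - P\,\mathcal{T}$ and $[\overline{\mathcal{S}}, \mathcal{L}] = -\overline{B}\,\overline{\mathcal{S}} - B\,\mathcal{S} - A\,\mathcal{T}$; these expansions exist with unique real-analytic coefficients precisely because the five fields form a frame. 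Conjugating the first relation, using $\overline{\mathcal{T}} = \mathcal{T}$ (itself immediate from $\overline{[\mathcal{L}, \overline{\mathcal{L}}]} = -[\mathcal{L}, \overline{\mathcal{L}}]$), would then supply the stated $[\overline{\mathcal{S}}, \overline{\mathcal{L}}] = -\overline{Q}\,\overline{\mathcal{S}} - \overline{R}\,\mathcal{S} - \overline{P}\,\mathcal{T}$.

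The first genuine use of the Jacobi identity would be to show that $[\mathcal{S}, \overline{\mathcal{L}}]$ and $[\overline{\mathcal{S}}, \mathcal{L}]$ coincide and that $A$ is real. Applying Jacobi to $\overline{\mathcal{L}}, \mathcal{T}, \mathcal{L}$ and simplifying with $[\mathcal{T}, \mathcal{L}] = -\mathcal{S}$ and $[\mathcal{L}, \overline{\mathcal{L}}] = -\sqrt{-1}\,\mathcal{T}$ collapses the identity to $[\overline{\mathcal{S}}, \mathcal{L}] = [\mathcal{S}, \overline{\mathcal{L}}]$; comparison with the conjugate of the $(A,B)$-relation forces $\overline{A} = A$, which is what makes the two bracket lines identical. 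For the three remaining (all vertical) brackets I would substitute the definitions into Jacobi. Writing $\mathcal{T} = \sqrt{-1}\,[\mathcal{L}, \overline{\mathcal{L}}]$ gives
\[
[\mathcal{S}, \mathcal{T}]
=
\sqrt{-1}\,
\big(
\big[[\mathcal{S}, \mathcal{L}], \overline{\mathcal{L}}\big]
+
\big[\mathcal{L}, [\mathcal{S}, \overline{\mathcal{L}}]\big]
\big),
\]
and expanding each inner bracket through the relations already fixed, with the Leibniz rule $[fX, Y] = f[X, Y] - Y(f)\,X$ producing the derivative terms, one collects the $\overline{\mathcal{S}}$-, $\mathcal{S}$-, $\mathcal{T}$-coefficients as $-G_{\sf rpl}$, $-F_{\sf rpl}$, $-E_{\sf rpl}$ in exactly the displayed form; for instance the $QB$ and $BQ$ contributions cancel in the $\mathcal{S}$-slot, leaving $F_{\sf rpl} = \sqrt{-1}(\mathcal{L}(B) - \overline{\mathcal{L}}(Q) + A + B\overline{B} - R\overline{R})$. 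Complex conjugation would then deliver $[\overline{\mathcal{S}}, \mathcal{T}]$ with coefficients $-\overline{F}_{\sf rpl}$, $-\overline{G}_{\sf rpl}$, $-\overline{E}_{\sf rpl}$.

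The last bracket I would treat identically: writing $\mathcal{S} = [\mathcal{L}, \mathcal{T}]$ and expanding $[\overline{\mathcal{S}}, \mathcal{S}] = \big[[\overline{\mathcal{S}}, \mathcal{L}], \mathcal{T}\big] + \big[\mathcal{L}, [\overline{\mathcal{S}}, \mathcal{T}]\big]$ through the bracket table, its $\overline{\mathcal{S}}$-, $\mathcal{S}$-, $\mathcal{T}$-coefficients read off $\overline{K}_{\sf rpl}$, $-K_{\sf rpl}$, $-\sqrt{-1}\,J_{\sf rpl}$. I expect the main obstacle to lie here, and to be one of pure bookkeeping rather than of ideas: because this expansion passes through the second-order quantities $E_{\sf rpl}$, $F_{\sf rpl}$, $G_{\sf rpl}$ together with their $\mathcal{L}$- and $\overline{\mathcal{L}}$-derivatives, the resulting $J_{\sf rpl}$ and $K_{\sf rpl}$ are the ``longer'' expressions mentioned in the statement. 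Throughout, and as for the primary functions themselves, I would deliberately refrain from re-expressing anything in terms of the graphing functions $\varphi_1, \varphi_2, \varphi_3$, that substitution being computationally prohibitive.
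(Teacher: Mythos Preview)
Your proposal is correct. The paper itself does not supply a proof of this lemma: it is stated with a citation to \cite{Merker-Sabzevari-5-cubic} and then used as input for the subsequent Darboux structure and Cartan reduction, so there is no in-text argument to compare against. Your route\,---\,first observing that all ten brackets are vertical (hence expand along $\overline{\mathcal{S}},\mathcal{S},\mathcal{T}$ alone), then reading off the three definitional brackets, \emph{defining} $P,Q,R,A,B$ through $[\mathcal{S},\mathcal{L}]$ and $[\overline{\mathcal{S}},\mathcal{L}]$, and finally forcing the remaining coefficients via Jacobi on $(\overline{\mathcal{L}},\mathcal{T},\mathcal{L})$ and on $(\mathcal{S},\mathcal{L},\overline{\mathcal{L}})$\,---\,is the natural one, and your sample computation of $E_{\sf rpl},F_{\sf rpl},G_{\sf rpl}$ checks out line by line against the displayed formulas.
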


Introduce then the coframe:
\[
\big\{
\overline{\sigma_0},\,
\sigma_0,\,
\rho_0,\,
\overline{\zeta_0},\,
\zeta_0\big\},
\]
which is dual to the frame:
\[
\big\{
\overline{\mathcal{S}},\,
\mathcal{S},\,
\mathcal{T},\,
\overline{\mathcal{L}},\,
\mathcal{L}\big\}.
\]
Organize the ten Lie brackets as a convenient auxiliary array:
\[
\footnotesize
\begin{array}{cccccccccccc}
& & \overline{\mathcal{S}} & & \mathcal{S} & & \mathcal{T} & &
\overline{\mathcal{L}} & & \mathcal{L}
\\
& & \boxed{d\overline{\sigma_0}} & & \boxed{d\sigma_0} & &
\boxed{d\rho_0} & & \boxed{d\overline{\zeta_0}} & & \boxed{d\zeta_0}
\\
\big[\overline{\mathcal{S}},\,\mathcal{S}\big] & = &
\overline{K}_{\sf rpl}\cdot\overline{\mathcal{S}} & + &
-\,K_{\sf rpl}\cdot\mathcal{S} & + & -{\scriptstyle{\sqrt{-1}}}\,J_{\sf rpl}\cdot\mathcal{T} & + & 0 &
+ & 0 & \boxed{\overline{\sigma_0}\wedge\sigma_0}
\\
\big[\overline{\mathcal{S}},\,\mathcal{T}\big] & = &
-\,\overline{F}_{\sf rpl}\cdot\overline{\mathcal{S}} & + &
-\,\overline{G}_{\sf rpl}\cdot\mathcal{S} & + &
-\,\overline{E}_{\sf rpl}\cdot\mathcal{T} & + & 0 & + & 0 &
\boxed{\overline{\sigma_0}\wedge\rho_0}
\\
\big[\overline{\mathcal{S}},\,\overline{\mathcal{L}}\big] & = &
-\,\overline{Q}\cdot\overline{\mathcal{S}} & + &
-\,\overline{R}\cdot\mathcal{S} & + & -\,\overline{P}\cdot\mathcal{T}
& + & 0 & + & 0 & \boxed{\overline{\sigma_0}\wedge\overline{\zeta_0}}
\\
\big[\overline{\mathcal{S}},\,\mathcal{L}\big] & = &
-\,\overline{B}\cdot\overline{\mathcal{S}} & + &
-\,{B}\cdot\mathcal{S} & + & -A\cdot\mathcal{T} & + & 0 & + & 0 &
\boxed{\overline{\sigma_0}\wedge\zeta_0}
\\
\big[\mathcal{S},\,\mathcal{T}\big] & = &
-\,G_{\sf rpl}\cdot\overline{\mathcal{S}} & + & -\,F_{\sf rpl}\cdot 
\mathcal{S} & +
& -\,E_{\sf rpl}\cdot\mathcal{T} & + & 0 & + & 0 &
\boxed{\sigma_0\wedge\rho_0}
\\
\big[\mathcal{S},\,\overline{\mathcal{L}}\big] & = &
-\,\overline{B}\cdot\overline{\mathcal{S}} & + & -\,B\cdot
\mathcal{S} & + & -\,A\cdot\mathcal{T} & + & 0 & + & 0 &
\boxed{\sigma_0\wedge\overline{\zeta_0}}
\\
\big[\mathcal{S},\,\mathcal{L}\big] & = &
-\,R\cdot\overline{\mathcal{S}} & + & -\,Q\cdot\mathcal{S} & + &
-\,P\cdot\mathcal{T} & + & 0 & + & 0 & \boxed{\sigma_0\wedge\zeta_0}
\\
\big[\mathcal{T},\,\overline{\mathcal{L}}\big] & = &
-\,\overline{\mathcal{S}} & + & 0 & + & 0 & + & 0 & + & 0 &
\boxed{\rho_0\wedge\overline{\zeta_0}}
\\
\big[\mathcal{T},\,\mathcal{L}\big] & = & 0 & + & -\,\mathcal{S} & +
& 0 & + & 0 & + & 0 & \boxed{\rho_0\wedge\zeta_0}
\\
\big[\overline{\mathcal{L}},\,\mathcal{L}\big] & = & 0 & + & 0 & + &
{\scriptstyle{\sqrt{-1}}}\,\mathcal{T} & + & 0 & + & 0 &
\boxed{\overline{\zeta_0}\wedge\zeta_0}
\end{array}
\]

Read {\em vertically} and put an overall minus sign to
get the {\sl initial Darboux structure}:
\[
\aligned d\overline{\sigma}_0 & =
-\,\overline{K}_{\sf rpl} \cdot \overline{\sigma}_0\wedge\sigma_0 +
\overline{F}_{\sf rpl}\cdot \overline{\sigma}_0\wedge\rho_0 +
\overline{Q}\cdot \overline{\sigma}_0\wedge\overline{\zeta}_0 +
\overline{B}\cdot \overline{\sigma}_0\wedge\zeta_0 +
\\
& \ \ \ \ \ + G_{\sf rpl}\cdot \sigma_0\wedge\rho_0 + \overline{B}\cdot
\sigma_0\wedge\overline{\zeta}_0 + R\cdot \sigma_0\wedge\zeta_0 +
\rho_0\wedge\overline{\zeta}_0,
\\
d\sigma_0 & = K_{\sf rpl}\cdot \overline{\sigma}_0\wedge\sigma_0 +
\overline{G}_{\sf rpl}\cdot \overline{\sigma}_0\wedge\rho_0 +
\overline{R}\cdot \overline{\sigma}_0\wedge\overline{\zeta}_0 +
{B}\cdot \overline{\sigma}_0\wedge\zeta_0 +
\\
& \ \ \ \ \ + F_{\sf rpl}\cdot \sigma_0\wedge\rho_0 + B\cdot
\sigma_0\wedge\overline{\zeta}_0 + Q\cdot \sigma_0\wedge\zeta_0 +
\rho_0\wedge\zeta_0,
\\
d\rho_0 & = {\scriptstyle{\sqrt{-1}}}\,J_{\sf rpl}\cdot \overline{\sigma}_0\wedge\sigma_0 +
\overline{E}_{\sf rpl}\cdot \overline{\sigma}_0\wedge\rho_0 +
\overline{P}\cdot \overline{\sigma}_0\wedge\overline{\zeta}_0 +
A\cdot \overline{\sigma}_0\wedge\zeta_0 +
\\
& \ \ \ \ \ + E_{\sf rpl}\cdot \sigma_0\wedge\rho_0 + A\cdot
\sigma_0\wedge\overline{\zeta}_0 + P\cdot \sigma_0\wedge\zeta_0 -
{\scriptstyle{\sqrt{-1}}}\,\overline{\zeta}_0\wedge\zeta_0,
\\
d\overline{\zeta}_0 & = 0,
\\
d\zeta_0 & = 0.
\endaligned
\]

The initial $G$-structure is:
\[
{\sf G}_{\text{\sf III}_{\text{\sf 1}}}^{\sf initial}
\,:=\,
\left\{
\left(\!\!
\begin{array}{ccccc}
{\sf a} & 0 & 0 & 0 & 0
\\
0 & \overline{\sf a} & 0 & 0 & 0
\\
{\sf b} & \overline{\sf b} & {\sf a}\overline{\sf a} & 0 & 0
\\
{\sf e} & {\sf d} & {\sf c} & {\sf a}{\sf a}\overline{\sf a} & 0
\\
\overline{\sf d} & \overline{\sf e} & \overline{\sf c} & 0 &
{\sf a}\overline{\sf a}\overline{\sf a}
\end{array}
\!\!\right)
\,\in\,
\mathcal{M}_{5\times 5}(\C)\,\colon\,\,
{\sf a}\,\in\,\C\backslash\{0\},\,\,
{\sf b},\,{\sf c},\,{\sf d},\,{\sf e}\,\in\,\C
\right\}.
\]
The lifted coframe is:
\[
\aligned \left(\!\!
\begin{array}{c}
\overline{\sigma}
\\
\sigma
\\
\rho
\\
\overline{\zeta}
\\
\zeta
\end{array}
\!\!\right) 
:=
\left(\!\!
\begin{array}{ccccc}
{\sf a}\overline{\sf a}\overline{\sf a} & 0 & 0 & 0 & 0
\\
0 & {\sf a}{\sf a}\overline{\sf a} & 0 & 0 & 0
\\
\overline{\sf c} & {\sf c} & {\sf a}\overline{\sf a} & 0 & 0
\\
\overline{\sf e} & {\sf d} & \overline{\sf b} & \overline{\sf a} & 0
\\
\overline{\sf d} & {\sf e} & {\sf b} & 0 & {\sf a}
\end{array}
\!\!\right)
\left(\!\!
\begin{array}{c}
\overline{\sigma_0}
\\
\sigma_0
\\
\rho_0
\\
\overline{\zeta_0}
\\
\zeta_0
\end{array}
\!\!\right).
\endaligned
\]

Performing absorption of torsion and normalization of group variables
thanks to remaining essential torsion (\cite{ Olver-1995,
Merker-Sabzevari-5-cubic}), the coefficient-function $R$ is
an invariant which creates bifurcation. Even in terms of $P$, $Q$,
$R$, $A$, $B$, the expressions of some of the curvatures happen to be
large and the study of their mutual independencies 
requires to take account of iterated Jacobi identities, an aspect
of the subject which remains invisible in non-parametric Cartan method.

\begin{theorem}
{\rm ({\sc M.-Sabzevari}, \cite{ Merker-Sabzevari-5-cubic})}
Within the branch $R = 0$, the biholomorphic equivalence problem for
$M^5 \subset \C^4$ in Class $\text{\sf III}_{\text{\sf 1}}$ reduces to
various absolute parallelisms namely to $\{ e\}$-structures on certain
manifolds of dimension $6$, or directly on the $5$-dimensional basis
$M$, unless all existing essential curvatures vanish identically, in
which case $M$ is (locally) biholomorphic to the cubic model
with a characterization of such a condition being
explicit in terms of the five fundamental functions $P$, $Q$, $R$,
$A$, $B$.

Within the branch $R \not\equiv 0$, reduction to an absolute
parallelism on the $5$-dimensional basis $M$ always takes place,
whence:
\[
{\sf dim}\,{\sf Aut}_{CR}(M)
\,\leqslant\,
5.
\]
\end{theorem}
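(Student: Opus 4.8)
The plan is to push Cartan's equivalence method, started above with the lifted coframe
\[
\big(\overline{\sigma},\,\sigma,\,\rho,\,\overline{\zeta},\,\zeta\big),
\]
all the way to an absolute parallelism, using the survival of $R$ as essential torsion as the organizing bifurcation. First I would differentiate the five lifted $1$-forms. Introducing the Maurer--Cartan forms $\pi_{\sf a}, \pi_{\sf b}, \pi_{\sf c}, \pi_{\sf d}, \pi_{\sf e}$ (and their conjugates) of ${\sf G}_{\text{\sf III}_{\text{\sf 1}}}^{\sf initial}$, each structure equation acquires the shape
\[
d\omega^i
=
\sum_j \pi^i_j\wedge\omega^j
+
\text{torsion},
\]
where the torsion is quadratic in the base coframe with coefficients polynomial in the group variables ${\sf a}, \dots, {\sf e}$ and in the five fundamental functions $P, Q, R, A, B$ together with their frame-derivatives. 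Because of the explicitness obstacle --- expanding $\Upsilon_i$, $\Pi_i$ would generate more than $10^8$ monomials --- these coefficients must be kept \emph{symbolic}: one uses only the ten bracket relations of the preceding Lemma and the definitions of the secondary functions $E_{\sf rpl}, F_{\sf rpl}, G_{\sf rpl}, J_{\sf rpl}, K_{\sf rpl}$, never their expansions in the $\varphi_{\scriptscriptstyle{\bullet}}$.

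Next I would absorb torsion in the standard way, replacing each $\pi^i_j$ by $\pi^i_j + (\text{linear combination of } \rho, \zeta, \overline{\zeta}, \sigma, \overline{\sigma})$ so as to kill every absorbable component; the residue is the essential torsion. The coefficient $R$ appears in this essential torsion, and this is exactly where the classification bifurcates according to whether $R \equiv 0$ or $R \not\equiv 0$. On the branch $R \not\equiv 0$, after relocalizing to the Zariski-open set where $R \neq 0$, the offending torsion component is normalized to a constant; this pins down one group parameter, and iterating absorption and normalization closes up \emph{all} remaining parameters, so the process terminates directly on $M$ with an $\{e\}$-structure. An absolute parallelism on the $5$-dimensional base forces
\[
{\sf dim}\,{\sf Aut}_{CR}(M)
\,\leqslant\,
5,
\]
as asserted.

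On the branch $R \equiv 0$ fewer normalizations are available at the first pass, so I would prolong, adjoining the surviving group parameter to pass to a bundle $N^6 \to M$ (or, when more normalizations succeed, directly to an $\{e\}$-structure on $M$). The closed structure equations then expose secondary essential curvatures; if \emph{all} of them vanish identically, the structure is that of the maximally symmetric flat model and the Lie--Cartan integration identifies $M$ locally with the cubic, whose $\mathfrak{aut}_{CR}$ is the $7$-dimensional algebra exhibited earlier. If instead some curvature is a nonconstant invariant, its normalization drops the structure onto the $5$-dimensional $M$, again yielding the bound ${\sf dim}\,{\sf Aut}_{CR}(M) \leqslant 5$.

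The genuine obstacle is not the bookkeeping but the \emph{mutual independence} of the invariants that drive the successive normalizations. Since $E_{\sf rpl}, F_{\sf rpl}, G_{\sf rpl}, J_{\sf rpl}, K_{\sf rpl}$ are bound to $P, Q, R, A, B$ through the bracket relations, every derivative introduced during absorption must be reconciled with the iterated Jacobi identities
\[
\big[\mathcal{X},\big[\mathcal{Y},\mathcal{Z}\big]\big]
+
\big[\mathcal{Y},\big[\mathcal{Z},\mathcal{X}\big]\big]
+
\big[\mathcal{Z},\big[\mathcal{X},\mathcal{Y}\big]\big]
=
0,
\]
applied to the frame $\big\{\overline{\mathcal{S}}, \mathcal{S}, \mathcal{T}, \overline{\mathcal{L}}, \mathcal{L}\big\}$. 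Verifying that the essential curvatures in the $R \equiv 0$ branch are functionally independent --- so that each normalization truly fixes a parameter instead of collapsing to a vacuous relation --- is the delicate point, and it is precisely here that the parametric method, keeping $P, Q, R, A, B$ abstract, is indispensable.
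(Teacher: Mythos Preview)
Your outline matches the approach the paper itself describes: the paper does not give a self-contained proof of this theorem but only sketches the method in the paragraph preceding the statement, deferring the full 172-page computation to~\cite{Merker-Sabzevari-5-cubic}. What the paper does say---absorption and normalization of torsion in the lifted coframe, the emergence of $R$ as the essential-torsion invariant creating the bifurcation, and the necessity of tracking iterated Jacobi identities to control the mutual independence of the curvatures---is exactly what you have reproduced, so your proposal is faithful to the paper's own account.
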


\smallskip

Class $\text{\sf III}_{\text{\sf 2}}$ was recently settled also.

\begin{theorem}
{\rm ({\sc Pocchiola}, \cite{ Pocchiola-2014b})}
If $M^5 \subset \C^4$ is a local $\mathcal{ C}^\omega$ CR-generic 
submanifold belonging to Class $\text{\sf III}_{\text{\sf 1}}$,
then there exists a $6$-dimensional principal bundle
$P^6 = M^5 \times \R^*$ and there exists a coframe on $P^6$:
\[
\varpi
:=
\big(
\lambda,\,\tau,\,\sigma,\,\rho,\,\zeta,\,\overline{\zeta}
\big)
\]
such that any local $\mathcal{ C}^\omega$
CR-diffeomorphism ${\sf H}_M \colon\, M \to M$ lifts as a bundle
isomorphism $\widehat{\sf H}_M \colon\, P \to P$ which satisfies
${\sf H}^* ( \varpi) = \varpi$. Moreover, the structure 
equations of $\varpi$ on $P$ are of the form:
\[
\aligned
d\tau
&
=
4\,\lambda\wedge\tau
+
I_1\,\tau\wedge\zeta
-
I_1\,\tau\wedge\overline{\zeta}
+
3\,I_1\,\sigma\wedge\rho
+
\sigma\wedge\zeta
+
\sigma\wedge\overline{\zeta},
\\
d\sigma
&
=
3\,\lambda\wedge\sigma
+
I_2\,\tau\wedge\rho
+
I_3\,\tau\wedge\zeta
+
\overline{I}_3\,\tau\wedge\overline{\zeta}
+
I_4\,\sigma\wedge\rho
\,-
\\
&
\ \ \ \ \ \ \ \ \ \ \ \ \ \ \ \ \ 
-\,
{\textstyle{\frac{1}{2}}}\,I_1\,
\sigma\wedge\zeta
+
{\textstyle{\frac{1}{2}}}\,I_1\,
\sigma\wedge\overline{\zeta}
+
\rho\wedge\zeta
+
\rho\wedge\overline{\zeta},
\\
d\rho
&
=
2\,\lambda\wedge\rho
+
I_5\,\tau\wedge\sigma
+
I_6\,\tau\wedge\rho
+
I_7\,\tau\wedge\zeta
+
\overline{I}_7\,\tau\wedge\overline{\zeta}
+
I_8\,\sigma\wedge\rho
\,+
\\
&
\ \ \ \ \ \ \ \ \ \ \ \ \ \ \ \ \ 
+
I_9\,\sigma\wedge\zeta
+
\overline{I}_9\,\sigma\wedge\overline{\zeta}
-
{\textstyle{\frac{1}{2}}}\,I_1\,
\rho\wedge\zeta
+
{\textstyle{\frac{1}{2}}}\,I_1\,
\rho\wedge\overline{\zeta}
+
{\scriptstyle{\sqrt{-1}}}\,
\,\zeta\wedge\overline{\zeta},
\\
d\zeta
&
=
\lambda\wedge\zeta
+
I_{10}\,\tau\wedge\sigma
+
I_{11}\,\tau\wedge\rho
+
I_{12}\,\tau\wedge\zeta
+
I_{13}\,\tau\wedge\overline{\zeta}
+
I_{14}\,\sigma\wedge\rho
+
I_{15}\,\sigma\wedge\zeta,
\endaligned
\]
for function $I_{\scriptscriptstyle{\bullet}}$, 
$J_{{\scriptscriptstyle{\bullet}}
{\scriptscriptstyle{\bullet}}}$ on $P$ together with:
\[
d\lambda
=
\sum_{\nu,\mu}\,J_{\nu\mu}\,
\nu\wedge\mu
\ \ \ \ \ \ \ \ \ \ \ \ \
{\scriptstyle{\left(\mu,\,\nu\,=\,\tau,\,
\sigma,\,\rho,\,\zeta,\,\overline{\zeta}\right)}}.
\]
\end{theorem}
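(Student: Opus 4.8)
\smartqed
The plan is to run \'Elie Cartan's equivalence method in the parametric style already deployed for Class $\text{\sf IV}_{\text{\sf 2}}$ in Theorem~\ref{Pocchiola-IV-2}: starting from the bracket-generated frame of this class, reduce the associated $G$-structure by successive absorption-normalization until only a one-dimensional real scaling survives, then set $P^6 := M^5 \times \R^*$ and read off $\varpi$. Concretely, I would first fix the frame $\big\{ \mathcal{L}, \overline{\mathcal{L}}, \mathcal{T}, \mathcal{S}, \mathcal{S}' \big\}$ for $\C\otimes_\R TM$ dual to the target forms $\zeta, \overline{\zeta}, \rho, \sigma, \tau$, where $\mathcal{L}$ is the Lemma~\ref{L-explicit} generator of $T^{1,0}M$, where $\mathcal{T} := {\scriptstyle{\sqrt{-1}}}\,[\mathcal{L}, \overline{\mathcal{L}}]$ is real, and where $\mathcal{S} := [\mathcal{L}, \mathcal{T}]$ and $\mathcal{S}' := [\mathcal{L}, \mathcal{S}]$ fill the remaining dimensions along the single bracket chain $\zeta \to \rho \to \sigma \to \tau$ visible in the stated equations. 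The defining feature here is that $[\overline{\mathcal{L}}, \mathcal{T}]$ is \emph{not} independent but falls into $\text{span}_\C\big\{ \mathcal{L}, \overline{\mathcal{L}}, \mathcal{T}, \mathcal{S} \big\}$, so that the full span is reached only at fourth bracket order. Expanding the ten pairwise brackets among these fields yields the initial Darboux structure $d\zeta_0, d\overline{\zeta}_0, d\rho_0, d\sigma_0, d\tau_0$ in terms of a finite list of fundamental coefficient-functions; as the ``Explicitness obstacle'' warns, these cannot be fully expanded in $\varphi_1, \varphi_2, \varphi_3$ (the relevant third-order data already exceed $10^8$ monomials), so I would keep them as named symbols throughout.

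Next I would write the initial $G$-structure as the subgroup of $\text{GL}(\C^5)$ --- constrained by the reality of $\mathcal{T}$ and by conjugation interchanging $\mathcal{L}$ and $\overline{\mathcal{L}}$ --- that preserves the weighted filtration in which $\mathcal{L}, \overline{\mathcal{L}}$ carry weight $1$, $\mathcal{T}$ weight $2$, $\mathcal{S}$ weight $3$, and $\mathcal{S}'$ weight $4$. Forming the lifted coframe, computing its exterior derivative, and entering the absorption-normalization loop, the essential torsion at each stage is normalized to constants, which pins down group parameters and reduces $G$. The crux is that every complex phase and shear parameter gets fixed this way, leaving exactly the real scaling $t \in \R^*$ acting by the above weights; its connection form is $\lambda$, and this is precisely what forces the integer coefficients $4, 3, 2, 1$ of $\lambda$ in $d\tau, d\sigma, d\rho, d\zeta$, since each semibasic form scales with the weight of its dual field. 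With the residual group one-dimensional one sets $P := M^5 \times \R^*$; the surviving torsion coefficients are renamed $I_\bullet$, and the final equation $d\lambda = \sum_{\nu,\mu} J_{\nu\mu}\, \nu\wedge\mu$ --- carrying no $\lambda$ on the right, reflecting that $\R^*$ is abelian --- is obtained by differentiating the reduced equations and imposing $d^2 = 0$.

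The lifting property ${\sf H}_M^*(\varpi) = \varpi$ is then automatic from equivariance: the entire construction depends only on the CR data of $M$, so any CR-diffeomorphism transports the bracket-generated frame and each subsequent normalization to their images, inducing a bundle automorphism of $P$ preserving every component of $\varpi$. The hard part will be the absorption-normalization bookkeeping in the presence of the degenerate direction $[\overline{\mathcal{L}}, \mathcal{T}]$: because one third-order bracket collapses to lower weight, the torsion tableau is less symmetric than in the Class $\text{\sf III}_{\text{\sf 1}}$ treatment of M.-Sabzevari, and verifying both that no reduction below $\R^*$ is possible and that the displayed equations close consistently requires tracking the forced relations among the unexpandable fundamental functions through iterated Jacobi identities, rather than by any direct computation in $\varphi_1, \varphi_2, \varphi_3$.
\qed
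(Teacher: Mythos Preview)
The paper does not prove this theorem; it is quoted from the cited preprint~\cite{Pocchiola-2014b} and no argument is supplied here. So there is no ``paper's own proof'' to compare against. (Note also that the statement as printed contains a typo: the hypothesis should read Class~$\text{\sf III}_{\text{\sf 2}}$, not $\text{\sf III}_{\text{\sf 1}}$, as is clear both from the citation and from the surrounding text ``Class $\text{\sf III}_{\text{\sf 2}}$ was recently settled also''; you correctly detected this and worked with the $\text{\sf III}_{\text{\sf 2}}$ bracket structure.)

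What you have written is not a proof but a strategy outline, and you say so explicitly (``The plan is\ldots'', ``I would\ldots'', ``The hard part will be\ldots''). As a strategy it is the right one and matches what Pocchiola actually does: build the length-four bracket frame, write the initial Darboux structure in named coefficient-functions, identify the graded $G$-structure with weights $1,2,3,4$, and run absorption--normalization until only a real dilation survives, yielding $P^6 = M^5 \times \R^*$ with $\lambda$ as connection form. Your observation that the integers $4,3,2,1$ in front of $\lambda$ come from the weights is correct and is exactly how they arise.

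Two points would need work to turn the outline into a proof. First, the reality structure: you take $\mathcal{S} = [\mathcal{L},\mathcal{T}]$ and $\mathcal{S}' = [\mathcal{L},\mathcal{S}]$ as frame vectors dual to $\sigma,\tau$, but $\sigma$ and $\tau$ in the stated equations are \emph{real} $1$-forms (no $\overline{\sigma},\overline{\tau}$ appear), so the actual frame fields at levels $3$ and $4$ must be real; this uses precisely the $\text{\sf III}_{\text{\sf 2}}$ collapse $\overline{\mathcal{S}} \in \mathrm{span}\{\mathcal{L},\overline{\mathcal{L}},\mathcal{T},\mathcal{S}\}$ to manufacture real combinations. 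Second, the claim that \emph{every} complex phase and shear is killed by essential torsion, leaving exactly $\R^*$, is the entire content of the theorem and cannot be asserted without carrying out the normalizations; in particular one must check that no prolongation is needed. Your sketch is honest about this being the hard part, but it remains undone.
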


For both Classes $\text{\sf III}_{\text{\sf 1}}$ and $\text{\sf
III}_{\text{\sf 2}}$, there also exist canonical Cartan connections
naturally related to the final $\{ e\}$-structures (\cite{
Pocchiola-2014b, Merker-Pocchiola-Sabzevari-2014}). 

\smallskip

{\em These works complete the program of performing 
{\em parametric} Cartan equivalences
for all CR manifolds up to dimension $5$.}

\smallskip

In dimension~6, Ezhov-Isaev-Schmalz (\cite{Ezhov-Isaev-Schmalz-1999})
treated elliptic and hyperbolic $M^6 \subset \C^4$. A wealth of higher
dimensional biholomorphic equivalence problems exists, {\em e.g.}
(\cite{ Mamai-2009}) for CR-generic $M^{2 + c} \subset \C^{ 1 + c}$ in
relation to classification of nilpotent Lie algebras (\cite{
Goze-Remm}).

\section{Kobayashi hyperbolicity}

\begin{quotation}
The dominant theme is the {\em interplay} between the extrinsic projective
geometry of algebraic subvarieties of $\P^n(\C)$ and their 
intrinsic geometric features.
Phillip~{\sc Griffiths}.
\end{quotation}

In local Cartan theory, as seen in what precedes, denominators
therefore play a central role in the differential ring generated by
derivatives of the fundamental graphing functions
$\varphi_j$. Similarly, in arithmetics of rational numbers $p /
q$, like {\em e.g.} in multizeta calculus
(\cite{Merker-2012-polyzetas}) involving a wealth of nested
Cramer-type determinants, a growing complexity, potentially infinite,
exists, and in fact, the complexity of rational numbers {\em also
enters} high order covariant derivatives of Cartan curvatures, as an
expression of the unity of mathematics. It is now time to show how
{\em explicit rationality} also concerns the core of global
algebraic geometry.

\smallskip

Let $X$ be a compact complex $n$-dimensional projective manifold that
is of {\sl general type}, namely whose canonical bundle $K_X =
\Lambda^n T_X^*$ is {\sl big} in the sense that ${\sf dim} \, H^0 \big( X,
(K_X)^{\otimes m} \big) \geqslant c\, m^n$ when $m \to \infty$ for
some constant $c > 0$. It is known that smooth hypersurfaces $X^n
\subset \mathbb{ P}^{ n+1} ( \mathbb{ C})$ are so if and only if their
degree $d$ is $\geqslant n+3$, since the {\sl adjunction
formula} shows that $K_X \cong \mathcal{ O}_X ( d - n - 2)$ (the
related rationality aspects will be discussed later).

A conjecture of Green-Griffiths-Lang expects that all nonconstant
entire holomorphic curves $f \colon \mathbb{ C} \to X$ should in fact
land (be `canalized') inside a certain proper subvariety $Y
\subsetneqq X$. The current state of the art is still quite (very) far
from reaching such a statement in this optimality. Furthermore, a
companion Picard-type conjecture dating back to Kobayashi 1970 expects
that all entire curves valued in Zariski-generic hypersurfaces $X^n
\subset \mathbb{ P}^{ n+1} ( \mathbb{ C})$ of degree $d\geqslant 2n+1$
should necessarily be {\em constant}, and the state of the art is also
still quite far from understanding properly when this occurs, even in
dimension $2$, because of the lack of an appropriate explicit rational
theory.

The current general method towards these conjectures consists as a
first step in setting up a great number of nonzero differential
equations $P \big( f, f', f'', \dots, f^{ (\kappa)} \big) = 0$ of some
order $\kappa$ (necessarily $\geqslant n$) satisfied by all
nonconstant $f \colon \mathbb{ C} \to X$, and then as a second step,
in trying to {\em eliminate} from such numerous differential equations
the true derivatives $f', f'', \dots, f^{ ( \kappa)}$ in order to
receive certain purely algebraic nonzero equations $Q ( f) = 0$
involving no derivatives anymore.

In 1979, by computing the Euler-Poincar\'e characteristic of a natural
vector bundle nowadays called the {\sl Green-Griffiths jet bundle}
$\mathcal{E}_{\kappa, m}^{\rm GG} T_X^* \to X$, and by
relying upon a $H^2$-cohomology vanishing theorem due to Bogomolov,
Green and Griffiths (\cite{ Green-Griffiths-1980}) 
showed the existence of differential equations satisfied by entire
curves valued in smooth {\em surfaces} $X^2 \subset \mathbb{ P}^3$ of
degree $d \geqslant 5$. In 1996, a breakthrough article by Siu-Yeung
(\cite{ Siu-Yeung-1996}) showed Kobayashi-hyperbolicity of complements
$\mathbb{ P}^2 \backslash X^1$ of generic curves of degree $\geqslant
10^{ 13}$ (rounding off). Around 2000, McQuillan \cite{
McQuillan-1998}, by importing ideas from (multi)foliation theory
considered entire maps valued in compact surfaces of general type
having Chern numbers ${\sf c}_1^2-{\sf c}_2 > 0$, which, for the case
of $X^2 \subset \mathbb{ P}^3$, improved very substantially the degree
bound to $d \geqslant 36$, and this was followed in a work of Demailly
and El Goul ({\em see}~\cite{ Demailly-1997}) by the improvement $d
\geqslant 21$. 
Later, using the Demailly-Semple bundle of jets that
are invariant under reparametrization of the source $\mathbb{ C}$,
Rousseau was the first to treat in great details 
threefolds $X^3 \subset \mathbb{ P}^4$
and he established algebraic
degeneracy of entire curves 
in degree $d \geqslant 593$ (\cite{ Rousseau-2007-Toulouse}).
Previously, 
in two conference proceedings of the first 2000 years
(ICM~\cite{ Siu-2002} and Abel Symposium~\cite{ Siu-2004}), Siu showed
the existence of differential equations on hypersurfaces $X^n \subset
\mathbb{ P}^{ n+1}$; the recent publication~\cite{ Siu-2012} of his
extended preprint of that time confirmed the validity and the strength
of his approach, which will be pursued {\em infra}.

Invariant jets used in~\cite{ Demailly-1997, Rousseau-2007-Toulouse,
Merker-2008-Symbolic, Diverio-Merker-Rousseau-2010} 
are in fact deeply connected to rationality.

Indeed, 
another instance of the key role of denominators appears in
computational invariant theory. Starting with an ideal $\mathcal{ I}
\subset \C[X_1,\dots,X_n]$ and with a nonzero $f \in
\C[X_1,\dots,X_n]$, the $f$-{\sl saturation} of $\mathcal{ I}$ is:
\[
\mathcal{I}^{\sf sat}
\,\,\equiv\,\,
\frac{\mathcal{I}}{f^\infty}
\overset{\text{\rm def}}{\,\,=\,\,}
\big\{
g\in\C[X]\colon\,\,
f^m\,g\,\in\,\mathcal{I}\,\,
\text{\rm for some}\,\,m\in\N
\big\},
\]
with increasing union stabilizing by noetherianity:
\[
\frac{\mathcal{I}}{f}
\,\subset\,
\frac{\mathcal{I}}{f^2}
\,\subset\,
\frac{\mathcal{I}}{f^3}
\,\subset\,
\cdots
\,\subset\,
\frac{\mathcal{I}}{f^m}
=
\frac{\mathcal{I}}{f^{m+1}}
=
\cdots.
\]
The {\sl Kernel algorithm}, discovered in the 
19\textsuperscript{th} Century, consists in:
\[
\aligned
\left<{\sf g}_0,\dots,{\sf g}_{n_0}\right>
&
=
{\sf Initial}\,\,{\sf ideal},
\\
\left<{\sf g}_0,\dots,{\sf g}_{n_0},\dots,
{\sf g}_{n_1}\right>
&
=
{\sf Saturation}_f
\left<{\sf g}_0,\dots,{\sf g}_{n_0}\right>,
\\
\left<{\sf g}_0,\dots,{\sf g}_{n_0},\dots,{\sf g}_{n_1},\dots,
{\sf g}_{n_2}\right>
&
=
{\sf Saturation}_f
\left<{\sf g}_0,\dots,{\sf g}_{n_0},\dots,{\sf g}_{n_1}\right>,
\ \ \ \ \
\text{\em etc.},
\endaligned
\]
and it has been applied in~\cite{ Merker-2008-Symbolic} to set up an
algorithm wich generates all polynomials in the $\kappa$-jet of a
local holomorphic map $\D \to \C^n$, $\zeta \longmapsto (f_1(\zeta),
\dots, f_n(\zeta))$ from the unit disc $\D = \{\vert \zeta \vert <
1\}$ that are invariant under all biholomorphic reparametrizations of
$\D$ with saturation with respect to the first derivative $f_1'$. The
explicit generators for $n = 4 = \kappa$ and for $n = 2$, $\kappa = 5$
given in~\cite{ Merker-2008-Symbolic} show well that
saturation (division) by $f_1'$ generates some unpredictable
complexity, a well known phenomenon in invariant theory.

Later, Berczi and Kirwan (\cite{ Berczi-Kirwan-2012}),
by developing concepts and
tools from reductive geometric invariant theory, showed that the
concerned algebra is always finitely generated. A
challenging still open question is to get information
about the number of generators and about the structure of
relations they share. In any case, the prohibitive complexity of
these algebras still prevents to hope for reaching arbitrary
dimension $n \geqslant 2$ and jet order $\kappa \geqslant n$ 
for Green-Griffiths and Kobayashi conjectures with 
invariant jets.

Around the same time, under the direction of Demailly and using an
algebraic version of holomorphic Morse inequalities delineated by
Trapani, Diverio studied a certain {\em subbundle} of the bundle of
invariant jets, already introduced
before by Demailly in~\cite{ Demailly-1997}. This,
for the first time after Siu, opened the door to arbitrary dimension
$n \geqslant 2$, though this was clearly not sufficient to reach the
first step towards the Green-Griffiths-Lang conjecture. 
In fact, an inspection of~\cite{
Diverio-Merker-Rousseau-2010} shows that on hypersurfaces $X^n
\subset \mathbb{ P}^{ n+1} ( \mathbb{ C})$ of degree $d$
approximately $\geqslant 2^{ n^5}$ (rounding off), many differential
equations exist with jet order $\kappa = n$ equal to the dimension,
but when increasing the jet order $\kappa = n+1, n+2, \dots$, an
unpleasant stabilization of the degree gain occurs, so that there is
absolutely no hope to reach the optimal $d \geqslant n+3$ for the
first step towards the Green-Griffiths-Lang conjecture (as did
Green-Griffiths in 1979 in dimension $n = 2$) with Diverio's technique
(even) for hypersurfaces $X^n \subset \mathbb{ P}^{ n+1} ( \mathbb{
C})$.

\smallskip

{\em For all of these reasons, it became undoubtedly clear that the whole
theory had to come back to the bundle of (plain) Green-Griffiths jets.}

\smallskip

On an $n$-dimensional complex manifod $X^n$, for a jet order $\kappa
\geqslant 1$ and for an homogeneous order $m \geqslant 1$, the {\sl
Green-Griffiths bundle} $\mathcal{E}_{\kappa, m}^{\sf GG} T_X^*
\longrightarrow X$ in a local chart $(z_1,\dots,z_n) \colon {\sf U}
\,\longrightarrow\, \C^n$ with ${\sf U} \subset X$ open, has general
local holomorphic sections which are polynomials in the derivatives
$z'$, $z''$, \dots, $z^{(\kappa)}$ of the $z_i$ (considered as
functions of a single variable $\zeta \in \D$) of the form:
\[
\sum_{\vert\alpha_1\vert+2\,\vert\alpha_2\vert
+\,\cdots\,+
\kappa\,\vert\alpha_\kappa\vert
\,=\,m}\,
P_{\alpha_1,\dots,\alpha_\kappa}(z)\,
\big(z'\big)^{\alpha_1}\,
\big(z''\big)^{\alpha_2}\,
\cdots\,
\big(z^{(\kappa)}\big)^{\alpha_\kappa},
\]
the $P_{\alpha_1, \dots, \alpha_\kappa}$ being holomorphic in ${\sf U}$
(this local definition ignores rationality features of the 
$P_{\alpha, \dots, \alpha_\kappa}$ which 
will be explored {\em infra}).

The memoir~\cite{ Merker-2010-Green-Griffiths}
established that on a hypersurface $X = X^n
\subset \mathbb{ P}^{ n+1} ( \mathbb{ C})$ of degree:
\[
d\geqslant n+3, 
\]
if $\mathcal{ A} \to X$ is any ample line
bundle\,\,---\,\,take {\em e.g.} simply $\mathcal{ A} := \mathcal{
O}_X ( 1)$\,\,---, then:
\[
\aligned
h^0
\big(X,\,\mathcal{E}_{\kappa,m}^{GG}T_X^*
\otimes 
\mathcal{A}^{-1}\big)
\geqslant
\frac{m^{(\kappa+1)n-1}}{
(\kappa!)^n\,((\kappa+1)n-1)!}
\Big\{
&
{\textstyle{\frac{(\log\kappa)^n}{n!}}}
d(d-n-2)^n
\\
&
-
{\sf Constant}_{n,d}\cdot(\log\kappa)^{n-1}
\Big\}
-
\\
&
-
{\sf Constant}_{n,d,\kappa}\cdot
m^{(\kappa+1)n-2},
\endaligned
\]
a formula in which the right-hand side minorant visibly tends to
$\infty$, as soon as both $\kappa \geqslant \kappa_{ n, d}^0$ and $m
\geqslant m_{ n, d, \kappa}^0$ do (no explicit expressions of the
constants was provided there). This, then, generalized to dimension $n
\geqslant 2$ the Green and Griffiths surface theorem, by estimating
the asymptotic quantititative behavior of weighted Young diagrams and
by applying partial (good enough) results of Br\"uckmann (\cite{
Bruckmann-1997}) concerning the cohomology of Schur bundles
$\mathcal{ S}^{ (\ell_1, \dots, \ell_n)} T_X^*$.

Also coming back to plain Green-Griffiths jets, but developing
completely different elaborate negative jet curvature estimates which
go back to an article of Cowen and Griffiths (\cite{
Cowen-Griffiths-1976}) and which had been `in the air' for some
time, though `blocked for deep reasons' by the untractable algebraic
complexity of invariant jets, Demailly (\cite{ Demailly-2011})
realized the next significant advance towards the conjecture by
establishing, under the sole assumption that $X$ be of general type
(not necessarily a hypersurface), that nonconstant entire holomorphic
curves $f \colon \mathbb{ C} \longrightarrow X$ always satisfy (many)
nonzero differential equations. The Bourbaki Seminar 1061 by Pa\u{u}n{
(\cite{ Paun-2012}) is a useful guide to enter the main concepts and
techniques of the topic.

However, according to~\cite{ Diverio-Rousseau-2013}, it is impossible
to reach the Green-Griffiths conjecture for {\em all} general type
compact complex manifolds by applying the jet differential
technique. This justifies to restrict attention to hypersurfaces or to
complete intersections in the projective space, and in this case, as
recently highlighted once again by Siu (\cite{ Siu-2012}), the only
convincing strategy towards a first solution to Kobayashi's conjecture
in arbitrary dimension $n \geqslant 1$ is to develope a new systematic
theory of explicit {\em rational} holomorphic sections of jet bundles.

\subsection{Holomorphic jet differentials}

Let $[X_0 \colon X_1 \colon \cdots \colon X_n ] \in \P^n ( \C)$
be homogenous coordinates. Recall that for $t \in \N$, 
holomorphic sections of $\mathcal{ O}_{ \P^n} (t)$
are represented on ${\sf U}_i = \{ X_i \neq 0\}$ as quotients:
\[
\ell_i([X])
:=
\frac{P(X_0\colon X_1\colon\cdots\colon X_n)}{(X_i)^t},
\]
for some polynomial $P \in \C[X]$ homogeneous 
of degree $t$, while, for $t \in \Z \backslash
\N$, meromorphic sections are:
\[
\ell_i([X])
:=
\frac{(X_i)^t}{P(X_0\colon X_1\colon\cdots\colon X_n)}.
\]

\begin{center}
\begin{picture}(0,0)%
\includegraphics{faisceaux-infini-x-y.pstex}%
\end{picture}%
\setlength{\unitlength}{4144sp}%
\begingroup\makeatletter\ifx\SetFigFont\undefined%
\gdef\SetFigFont#1#2#3#4#5{%
  \reset@font\fontsize{#1}{#2pt}%
  \fontfamily{#3}\fontseries{#4}\fontshape{#5}%
  \selectfont}%
\fi\endgroup%
\begin{picture}(4986,1426)(363,-1470)
\put(1045,-169){\makebox(0,0)[lb]{\smash{{\SetFigFont{10}{12.0}{\familydefault}{\mddefault}{\updefault}{\color[rgb]{0,0,0}$\P^2$}%
}}}}
\put(786,-1208){\makebox(0,0)[lb]{\smash{{\SetFigFont{10}{12.0}{\familydefault}{\mddefault}{\updefault}{\color[rgb]{0,0,0}$\infty_y$}%
}}}}
\put(4965,-856){\makebox(0,0)[lb]{\smash{{\SetFigFont{10}{12.0}{\familydefault}{\mddefault}{\updefault}{\color[rgb]{0,0,0}$\infty_x$}%
}}}}
\put(3489,-214){\makebox(0,0)[lb]{\smash{{\SetFigFont{10}{12.0}{\familydefault}{\mddefault}{\updefault}{\color[rgb]{0,0,.69}$X^1$}%
}}}}
\end{picture}%

\end{center}

To review another global rationality phenomenon, consider a
complex algebraic curve $X^1$ smooth or with simple normal crossings
in $\P^2 ( \C)$ of degree $d \geqslant 1$, choose two points
$\infty_x, \infty_y \not \in X$ so that the line $\overline{\infty_x
\infty_y}$ intersects $X^1$ transversally in $d$ distinct points,
and adapt homogeneous coordinates $[T \colon X \colon Y] \in \P^2$
with affine $x := \frac{ X}{T}$ and $y := \frac{ Y}{T}$ so that
$\overline{\infty_x \infty_y} = \P_1^\infty = \{ [0\colon X \colon
Y]\}$, $\infty_x = [0 \colon 1 \colon 0]$, $\infty_y = [0 \colon 0
\colon 1]$, whence:
\[
X^1\cap\C_{(x,y)}^2
\,=\,
\big\{
(x,y)\in\C^2\colon\,
R(x,y)
=
0
\big\},
\]
for some polynomial $R \in \C[x,y]$ of degree $d$.

\begin{center}
\begin{picture}(0,0)%
\includegraphics{plongement-croisements.pstex}%
\end{picture}%
\setlength{\unitlength}{4144sp}%
\begingroup\makeatletter\ifx\SetFigFont\undefined%
\gdef\SetFigFont#1#2#3#4#5{%
  \reset@font\fontsize{#1}{#2pt}%
  \fontfamily{#3}\fontseries{#4}\fontshape{#5}%
  \selectfont}%
\fi\endgroup%
\begin{picture}(2721,1814)(2403,-2818)
\put(2911,-1909){\makebox(0,0)[lb]{\smash{{\SetFigFont{11}{13.2}{\familydefault}{\mddefault}{\updefault}{\color[rgb]{0,0,0}$X^1$}%
}}}}
\put(4516,-2097){\makebox(0,0)[lb]{\smash{{\SetFigFont{11}{13.2}{\familydefault}{\mddefault}{\updefault}{\color[rgb]{0,0,0}$x$}%
}}}}
\put(3692,-1576){\makebox(0,0)[lb]{\smash{{\SetFigFont{11}{13.2}{\familydefault}{\mddefault}{\updefault}{\color[rgb]{0,0,0}$y$}%
}}}}
\put(3614,-1151){\makebox(0,0)[lb]{\smash{{\SetFigFont{11}{13.2}{\familydefault}{\mddefault}{\updefault}{\color[rgb]{.69,0,0}$\infty_y$}%
}}}}
\put(5109,-2147){\makebox(0,0)[lb]{\smash{{\SetFigFont{11}{13.2}{\familydefault}{\mddefault}{\updefault}{\color[rgb]{.69,0,0}$\infty_x$}%
}}}}
\put(5105,-1840){\makebox(0,0)[lb]{\smash{{\SetFigFont{11}{13.2}{\familydefault}{\mddefault}{\updefault}{\color[rgb]{.69,0,0}$\C_{\infty,x}^1$}%
}}}}
\put(2467,-2695){\makebox(0,0)[lb]{\smash{{\SetFigFont{11}{13.2}{\familydefault}{\mddefault}{\updefault}{\color[rgb]{0,0,.69}$\C^2$}%
}}}}
\end{picture}%

\end{center}

Within the intrinsic theory, in terms of the ambient line bundles
$\mathcal{ O}_{ \P^2} ( t) \longrightarrow \P^2$
($t \in \Z$), the adjunction formula tells:
\[
T_X^*
\,\cong\,
\mathcal{O}_X(d-3)
\,\overset{\sf def}{\,\,=\,\,}
\mathcal{O}_{\P^2}(d-3)\big\vert_X.
\]

\begin{quotation}
The genus formula is of great importance, because it exposes the
relationship between the `{\em intrinsic}' topological invariant $g$
of the curve $X^1$ and the `{\em extrinsic}' quantity $d$.
Phillip {\sc Griffiths}.
\end{quotation}

\begin{theorem}
\text{\bf (Inspirational)}
On a smooth degree $d$ algebraic curve $X^1 \subset \P^2$:
\[
{\textstyle{\frac{(d-1)(d-2)}{2}}}
\,=\,
{\sf dim}\,H^0\big(X,T_X^*\big)
\,=\,
{\sf genus}(X)
\,=\,
g.
\]
\end{theorem}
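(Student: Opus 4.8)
The plan is to read off all three equalities from the adjunction isomorphism $T_X^* \cong \mathcal{O}_X(d-3)$ recalled just above, combined with two standard inputs: the Hodge-theoretic interpretation of holomorphic $1$-forms, and the cohomology of line bundles on $\P^2$. Since $X^1$ is a smooth compact Riemann surface, its holomorphic cotangent bundle coincides with the canonical bundle, $T_X^* = \Omega_X^1 = K_X$, so $H^0(X, T_X^*)$ is exactly the space of global holomorphic $1$-forms. The Hodge decomposition on a compact Riemann surface gives $H^1(X, \C) = H^{1,0} \oplus H^{0,1}$ with $H^{1,0} = H^0(X, \Omega_X^1)$ and $\overline{H^{1,0}} = H^{0,1}$, whence $2\,\dim H^0(X, \Omega_X^1) = \dim_\C H^1(X, \C) = 2g$; this yields the middle equality $\dim H^0(X, T_X^*) = g$ (equivalently, one may take this as the definition of the geometric genus and invoke its coincidence with the topological genus). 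The last equality ${\sf genus}(X) = g$ is merely the choice of notation.

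The arithmetic content is therefore the first equality, and here I would compute $\dim H^0(X, \mathcal{O}_X(d-3))$ directly. Because $X = \{R = 0\}$ is cut out by a single homogeneous defining form of degree $d$, its ideal sheaf is $\mathcal{O}_{\P^2}(-d)$, and the structure sequence $0 \to \mathcal{O}_{\P^2}(-d) \to \mathcal{O}_{\P^2} \to \mathcal{O}_X \to 0$, twisted by $\mathcal{O}_{\P^2}(d-3)$, becomes
\[
0 \,\longrightarrow\, \mathcal{O}_{\P^2}(-3) \,\longrightarrow\, \mathcal{O}_{\P^2}(d-3) \,\longrightarrow\, \mathcal{O}_X(d-3) \,\longrightarrow\, 0,
\]
the first map being multiplication by $R$. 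Passing to the long exact cohomology sequence, I would use the classical vanishing $H^0\big(\P^2, \mathcal{O}_{\P^2}(-3)\big) = 0$ (negative twist) together with $H^1\big(\P^2, \mathcal{O}_{\P^2}(t)\big) = 0$ for every $t \in \Z$ (the middle cohomology of line bundles on $\P^2$ vanishes, by the Bott formula). These two facts collapse the sequence to an isomorphism $H^0\big(\P^2, \mathcal{O}_{\P^2}(d-3)\big) \cong H^0\big(X, \mathcal{O}_X(d-3)\big)$.

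It then remains to count: the global sections of $\mathcal{O}_{\P^2}(d-3)$ are exactly the homogeneous polynomials of degree $d-3$ in the three variables $T, X, Y$, of which there are $\binom{d-3+2}{2} = \binom{d-1}{2} = \frac{(d-1)(d-2)}{2}$. Combining with adjunction, $\dim H^0(X, T_X^*) = \frac{(d-1)(d-2)}{2}$, which closes the chain. The degenerate low-degree cases are consistent: for $d = 1, 2$ the twist $d-3$ is negative, both sides vanish, and indeed lines and smooth conics are rational.

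I expect the only genuinely delicate point to be the middle equality, i.e. the passage from the sheaf-cohomological dimension $\dim H^0(X, K_X)$ to the topological invariant $g$: this is precisely the intrinsic/extrinsic bridge highlighted in Griffiths' quotation, and it rests on Hodge theory rather than on any manipulation of the defining equation. An alternative route that avoids Hodge theory is to invoke Riemann--Roch together with Serre duality: from $\chi(K_X) = \deg K_X - g + 1$, $\deg K_X = 2g-2$, and $h^1(K_X) = h^0(\mathcal{O}_X) = 1$, one obtains $h^0(K_X) = g$ directly; but this presupposes the value $2g-2$ of the canonical degree, so for a self-contained argument the Hodge-theoretic identification is the cleaner input.
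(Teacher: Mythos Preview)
Your argument is correct. The cohomological computation via the twisted structure sequence is clean and complete, and your handling of the equality $h^0(K_X)=g$ through Hodge theory (with the Riemann--Roch alternative noted) is fine.

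The paper, however, takes a deliberately different route, in keeping with its overall theme of \emph{explicit rationality}. Rather than invoking the structure sequence and Bott vanishing, it writes down the holomorphic $1$-forms on $X$ by hand: from $0=R_x\,dx+R_y\,dy$ one extracts the nowhere-vanishing meromorphic form $\omega:=dy/R_x=-dx/R_y$, well defined and holomorphic on all of $X\cap\C^2$ because smoothness forces $\{R_x\neq 0\}\cup\{R_y\neq 0\}$ to cover; then every global section of $T_X^*$ is $G(x,y)\cdot\omega$ with $G\in\C[x,y]$, and an explicit affine chart change at infinity shows holomorphicity there precisely when $\deg G\leqslant d-3$. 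The space of such $G$ has dimension $\binom{d-1}{2}$. Your count of homogeneous degree-$(d-3)$ polynomials and the paper's count of affine polynomials of degree $\leqslant d-3$ are of course the same number seen in projective versus affine coordinates; what genuinely differs is that you use abstract vanishing to pass from $\P^2$ to $X$, whereas the paper verifies holomorphicity pointwise with the rational expression $G\,dy/R_x$ and tracks pole orders across the line at infinity. Your approach is shorter and more portable; the paper's approach makes visible the denominator $R_x$ and the explicit transition rules, which is exactly the data it reuses when generalising to higher-order jet differentials ${\sf J}_R^\lambda$ later in the section.
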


But the extrinsic theory (\cite{ Griffiths-1989})
tells more. Differentiating once $0 = R(x,y)$:
\[
0
=
R_x\,dx
+
R_y\,dy
\ \ \ \ \ \ \ \ \
\Longleftrightarrow
\ \ \ \ \ \ \ \ \
\frac{dy}{R_x}
\,=\,
-\,\frac{dx}{R_y},
\]
{\em denominators must appear}. If $X^1$ is smooth, $X^1 \cap \C^2 =
\{ R_x \neq 0\} \cup \{ R_y \neq 0\}$, and global holomorphic sections
of $T_X^*$ are represented by multiplications:
\[
G(x,y)
\bigg(
\frac{dy}{R_x}
\,=\,
-\,\frac{dx}{R_y}
\bigg),
\]
with $G \in \C[x,y]$ having degree $\leqslant d-3$, 
the space of such $G$ being of dimension
$\frac{ (d-3 + 2)(d-3+1)}{2}$, 
since changing
affine chart in order to capture $\P_\infty^1 \backslash \{ \infty_x\}$:
\[
(x,y)
\,\longmapsto\,
\bigg(
\frac{x}{y},\,
\frac{1}{y}
\bigg)
=:
\big(x_2,y_2\big),
\ \ \ \ \ \ \ \ \ \ \ \ \ \
R_2(x_2,y_2)
\,:=\,
(y_2)^d\,
R\bigg(
\frac{x_2}{y_2},\,
\frac{1}{y_2}
\bigg),
\]
knowing $dy = -\, dy_2 / (y_2)^2$, the left side $G \, dy / R_x$
transfers to:
\[
G(x,y)\,
\frac{dy}{R_x}
\,=\,
G
\bigg(
\frac{x_2}{y_2},\,\frac{1}{y_2}
\bigg)\,
(y_2)^{d-3}\,
\frac{-\,dy_2}{R_{2,x_2}(x_2,y_2)},
\]
the denominator $R_{2, x_2} (x_2, y_2)$ being nonzero on $X$ at every
point of $\{y_2 = 0\} = \C_{\infty,y}^1$, while $(y_2)^{ d-3}$
compensates the poles of $G \big( \frac{ x_2}{ y_2}, \frac{ 1}{ y_2} \big)$
as soon as ${\sf deg}\, G \leqslant d - 3$.

\smallskip

Next, the {\sl intrinsic} Riemann-Roch theorem states that, 
given any divisor $D$ on a compact, abstract, Riemann surface
$S$, if $\mathcal{ O}_D$ denotes the sheaf of meromorphic functions
$f \in \Gamma (\mathcal{M}_S )$ with 
${\sf div}\, f \geqslant - D$, then:
\[
{\sf dim}\,H^0\big(S,\mathcal{O}_D\big)
-
{\sf dim}\,H^1\big(S,\mathcal{O}_D\big)
=
{\sf deg}\,D
-
{\sf genus}(S)
+
1.
\]

For compact Riemann surfaces $S$, there exists a satisfactory 
correspondence between intrinsic features and extrinsic
embeddings: all $S$ admit a representation as a curve $X^1 \subset \P^2$,
smooth or having normal crossings singularities.

Using Brill-Noether duality, the Riemann-Roch theorem can be proved
(\cite{ Griffiths-1989})
for such $X^1 \subset \P^2$ by means of two inequalities:
\[
\aligned
{\sf deg}\,D
-
{\sf g}(S)
+
1
&
\,\leqslant\,
{\sf dim}\,
\big\{
f\in\mathcal{M}(S)\colon\,
{\sf div}(f)
\geqslant
-\,D
\big\}
=
{\sf dim}\,H^0\big(S,\mathcal{O}_D\big),
\\
-\,
{\sf deg}\,D
+
{\sf g}(S)
-
1
&
\,\leqslant\,
{\sf dim}\,
\big\{
\omega\in\mathcal{M}T_X^*(S)\colon\,
{\sf div}(\omega)
\geqslant
+\,D
\big\}
=
{\sf dim}\,H^1\big(S,\mathcal{O}_D\big).
\endaligned
\]
For instance, the second inequality is proved by means
of {\em extrinsic} meromorphic differential forms:
\[
\frac{G}{H}\,
\bigg(
\frac{dy}{R_x}
=
-\,
\frac{dx}{R_y}
\bigg),
\]
with $G, H \in \C[ x, y]$ subjected to appropriate
conditions with respect to $D$.

\medskip\noindent{\bf Jets of order $2$.} Next, consider 
second order jets of holomorphic maps
$\D \to X^1 \subset \P^2$, 
use $x'$, $y'$ instead of $dx$, $dy$, and $x''$, $y''$. Differentiate
$0 \equiv R \big( x(\zeta), y(\zeta) \big)$ twice: 
\[
0
=
x'\,R_x
+
y'\,R_y,
\ \ \ \ \ \ \ \ \ \ \ \ \ \ 
0
=
x''\,R_x
+
y''\,R_y
+
(x')^2\,R_{xx}
+
2\,x'y'\,R_{xy}
+
(y')^2\,R_{yy},
\]
divide by $R_x\, R_y$, solve for $y''$, replace $y'$ on the right:
\[
\frac{y'}{R_x}
=
-\,\frac{x'}{R_y},
\ \ \ \ \ \ \ \ \ \ \ \ \ \ 
\frac{y''}{R_x}
=
-\,\frac{x''}{R_y}
-
\frac{(x')^2}{R_y}\,
\bigg[
\frac{R_{xx}}{\boxed{R_x}}
-
2\,\frac{R_{xy}}{R_y}
+
\frac{R_x}{R_y}\,
\frac{R_{yy}}{R_y}
\bigg].
\]
To erase the division by $R_x$, multiply the first equation
by $x'\, \frac{ R_{xx}}{ R_x}$ and subtract:
\[
\frac{y''}{R_x}
-
\frac{y'x'}{R_x}\,
\frac{R_{xx}}{R_x}
=
-\,\frac{x''}{R_y}
-
\frac{(x')^2}{R_y}\,
\bigg[
-
2\,\frac{R_{xy}}{R_y}
+
\frac{R_x}{R_y}\,
\frac{R_{yy}}{R_y}
\bigg].
\]
Little further, this expression can be symmetrized (\cite{ Merker-2014}):
\[
\aligned
\frac{y''}{R_x}
+
\frac{(y')^2}{R_x}
\bigg[\!
-
\frac{R_{xy}}{R_x}
+
\frac{R_y}{R_x}\,
\frac{R_{xx}}{R_x}
\bigg]
=
-\frac{x''}{R_y}
-
\frac{(x')^2}{R_y}
\bigg[\!
-\frac{R_{xy}}{R_y}
+
\frac{R_x}{R_y}\,
\frac{R_{yy}}{R_y}
\bigg],
\endaligned
\]
and this provides {\em second order holomorphic jet differentials} on
$X^1 \subset \P^2$ when $d \geqslant 4$, after checking holomorphicity on
the $\P_\infty^1$. For jets of order $3$ (\cite{ Merker-2014}):
\[
\aligned
&
\frac{y'''}{R_x}
+
\frac{y''y'}{R_x}
\bigg[
-\,3\,\frac{R_{xy}}{R_x}
+
3\,
\bigg(\!\frac{R_y}{R_x}\!\bigg)
\frac{R_{xx}}{R_x}
\bigg]
\,+
\\
&
\ \ \ \ \ \ 
+
\frac{(y')^3}{R_x}
\bigg[
-\,6\bigg(\!\frac{R_y}{R_x}\!\bigg)
\frac{R_{xy}}{R_x}\frac{R_{xx}}{R_x}
+
3\bigg(\!\frac{R_y}{R_x}\!\bigg)^2
\frac{R_{xx}}{R_x}\frac{R_{xx}}{R_x}
+
3\bigg(\!\frac{R_y}{R_x}\!\bigg)
\frac{R_{xxy}}{R_x}
-
\bigg(\!\frac{R_y}{R_x}\!\bigg)^2
\frac{R_{xxx}}{R_x}
\bigg]
=
\\
&
=
-\,\frac{x'''}{R_y}
-
\frac{x''x'}{R_y}
\bigg[
-\,3\,\frac{R_{xy}}{R_y}
+
3\bigg(\!\frac{R_x}{R_y}\!\bigg)
\frac{R_{yy}}{R_y}
\bigg]
\,-
\\
&
\ \ \ \ \ \ 
-\,
\frac{(x')^3}{R_y}
\bigg[
-\,6\bigg(\!\frac{R_x}{R_y}\!\bigg)
\frac{R_{xy}}{R_y}\frac{R_{yy}}{R_y}
+
3\bigg(\!\frac{R_x}{R_y}\!\bigg)^2
\frac{R_{yy}}{R_y}\frac{R_{yy}}{R_y}
+
3\bigg(\!\frac{R_x}{R_y}\!\bigg)
\frac{R_{xyy}}{R_y}
-
\bigg(\!\frac{R_x}{R_y}\!\bigg)^2
\frac{R_{yyy}}{R_y}
\bigg].
\endaligned
\] 
Strikingly, and quite interestingly, there appear explicit rational
expressions belonging to $\Z \big[ \frac{ R_{\cdot \cdot}}{ R_x}
\big]$ on the left, and to $\Z \big[ \frac{ R_{\cdot \cdot}}{ R_y}
\big]$ on the right.

\medskip\noindent{\bf Jets of arbitrary order.} The {\sl Green-Griffiths} bundle
$\mathcal{E}_{ \kappa, m}^{\rm GG} T_{X^1}^* 
\longrightarrow X^1 \subset \P^2$
consists, for a jet order $\kappa \geqslant 1$, in the $m$-homogeneous
polynomialization of the bundle $J^\kappa\big(\D,\,X^1\big)$, 
and is a {\em vector} bundle of:
\[
{\sf rank}\,
\big(\mathcal{E}_{\kappa,m}^{\rm GG}T_{X^1}^*\big)
\,=\,
{\sf Card}\,
\Big\{
\big(m_1,m_2,\dots,m_\kappa\big)
\in
\N^\kappa\colon\,
m_1+2\,m_2+\cdots+\kappa\,m_\kappa
=
m
\Big\}.
\]
It admits a natural filtration whose associated graded vector bundle is
(\cite{ Merker-2010-Green-Griffiths}):
\[
{\sf Gr}^\bullet
\mathcal{E}_{\kappa,m}^{\rm GG}T_{X^1}^*
\,\cong\,
\bigoplus_{m_1+\cdots+\kappa m_\kappa=m
\atop
m_1\geqslant 0,\,\dots,\,m_\kappa\geqslant 0}\,
\Big(
{\sf Sym}^{m_1}T_X^*
\otimes\cdots\otimes
{\sf Sym}^{m_\kappa}T_X^*
\Big),
\] 
whence:
\[
{\sf Gr}^\bullet
\mathcal{E}_{\kappa,m}^{\rm GG}T_{X^1}^*
\,\cong\,
\bigoplus_{m_1+\cdots+\kappa m_\kappa=m
\atop
m_1\geqslant 0,\,\dots,\,m_\kappa\geqslant 0}\,
\mathcal{O}_X
\Big(
(m_1+\cdots+m_\kappa)\,(d-3)
\Big).
\]
Knowing that for $t \geqslant d$:
\[
{\sf dim}\,H^0\big(X,\mathcal{O}_X(t)\big)
\,=\,
\binom{t+2}{2}
-
\binom{t-d+2}{2},
\]
and knowing $H^1 \big( X, \mathcal{ O}_X ( t) \big) = 0$, it follows:
\[
\aligned
{\sf dim}\,
H^0
\big(X,\,
\mathcal{E}_{\kappa,m}^{\rm GG}T_{X^1}^*
\big)
\,=\,
\sum_{m_1+\cdots+\kappa m_\kappa=m}\,
\bigg\{
&
\binom{(m_1+\cdots+m_\kappa)(d-3)+2}{2}
\,-
\\
&
\,-
\binom{(m_1+\cdots+m_\kappa)(d-3)-d+2}{2}
\bigg\},
\endaligned
\]
which, asymptotically, becomes:
\[
{\sf dim}\,
H^0
\big(X,\,
\mathcal{E}_{\kappa,m}^{\rm GG}T_X^*
\big)
\,\geqslant\,
\frac{m^\kappa}{\kappa!\,\kappa!}\,
\Big[
d^2\,\log\,\kappa
+
d^2\,{\rm O}(1)
+
{\rm O}(d)
\Big]
+
{\rm O}\big(m^{\kappa-1}\big).
\]

\begin{theorem}
\text{\rm (\cite{ Merker-2014})}
Given an arbitrary jet order $\kappa \geqslant 1$, for every
$1 \leqslant \lambda \leqslant \kappa$, if 
${\sf deg}\, R \geqslant \kappa + 3$, there exist
perfectly symmetric expressions: 
\[
\footnotesize
{\sf J}_R^\lambda
\,:=\,
\left\{
\aligned
&
\ \ \ \ \,
\frac{y^{(\lambda)}}{R_x}
+\!\!
\sum_{\mu_1+\cdots+(\lambda-1)\mu_{\lambda-1}=\lambda}
\!\!\!\!\!\!\!
\frac{\big(y'\big)^{\mu_1}\cdots
\big(y^{(\lambda-1)}\big)^{\mu_{\lambda-1}}}{
R_x}\,
\mathcal{J}_{\mu_1,\dots,\mu_{\lambda-1}}^\lambda\!
\left(
\frac{R_y}{R_x},
\bigg(
\frac{R_{x^i y^j}}{R_x}
\bigg)_{2\leqslant i+j\leqslant
\atop
\leqslant
-1+\mu_1+\cdots+\mu_{\lambda-1}}
\right),
\\
\!
&
-\,
\frac{x^{(\lambda)}}{R_y}
-\!\!
\sum_{\mu_1+\cdots+(\lambda-1)\mu_{\lambda-1}=\lambda}
\!\!\!\!\!\!\!
\frac{\big(x'\big)^{\mu_1}\cdots\big(x^{(\lambda-1)}\big)^{\mu_{\lambda-1}}}{
R_y}\,
\mathcal{J}_{\mu_1,\dots,\mu_{\lambda-1}}^\lambda\!
\left(
\frac{R_x}{R_y},
\bigg(
\frac{R_{y^ix^j}}{R_y}
\bigg)_{2\leqslant i+j\leqslant
\atop
\leqslant
-1+\mu_1+\cdots+\mu_{\lambda-1}}
\right),
\\
&
\ \ \ \ \ \ \ \ \ \ \ 
0
\ \ \ \ \ \ \ \ \ 
\text{on}\ \
X^1\cap\P_\infty^1,
\endaligned
\right.
\]
which define {\sl generating} holomorphic jet differentials
on the smooth curve $X^1 \subset \P^2$, notably on the two
open subsets:
\[
\aligned
&
\big\{R_x\neq 0\big\}
\ \
\text{where the bundle $J^\kappa(\D,X^1)$ has intrinsic
coordinates:}
\\
&
\ \ \ \ \ \ \ \ \ \ \ \ \ \ \ \ \ \ 
\big(y;y',y'',\dots,y^{(\kappa)}\big),
\\
&
\big\{R_y\neq 0\big\}
\ \
\text{where the bundle $J^\kappa(\D,X^1)$ has intrinsic
coordinates:}
\\
&
\ \ \ \ \ \ \ \ \ \ \ \ \ \ \ \ \ \ 
\big(x;x',x'',\dots,x^{(\kappa)}\big).
\endaligned
\]
All $J_R^\lambda$ vanish on the ample divisor
$X^1 \cap \P_\infty^1$, and involve universal polynomials:
\[
\mathcal{J}_{\mu_1,\dots,\mu_{\lambda-1}}^\lambda
=
\mathcal{J}_{\mu_1,\dots,\mu_{\lambda-1}}^\lambda
\bigg(
{\sf R}_{0,1},\,\,
\Big(
{\sf R}_{i,j}
\Big)_{2\leqslant i+j\leqslant
-1+\mu_1+\cdots+\mu_{\lambda-1}}
\bigg)
\]
with coefficients in $\Z$,
and in terms of these generating jet differentials, 
holomorphic sections of $\mathcal{E}_{ \kappa, m}^{\rm GG} T_{X^1}^*$
are generally represented as:
\[
\boxed{\,
\sum_{m_1+2m_2+\cdots+\kappa m_\kappa=m}\,
\big({\sf J}_R^1\big)^{m_1}
\big({\sf J}_R^2\big)^{m_2}
\,\cdots\,
\big({\sf J}_R^\kappa\big)^{m_\kappa}
\cdot
{\sf G}_{m_1,m_2,\dots,m_\kappa}(x,y),\,}
\]
with polynomials:
\[
{\sf G}_{m_1,m_2,\dots,m_\kappa}
=
{\sf G}_{m_1,m_2,\dots,m_\kappa}(x,y)
\]
of degrees:
\[
\deg\,
{\sf G}_{m_1,m_2,\dots,m_\kappa}
\,\leqslant\,
\underbrace{
m_1(d-3)
+
m_2(d-4)
+\cdots+
m_\kappa\big(d-\kappa-2\big)}_{
=:\,\delta},
\]
which belong to the quotient spaces:
\[
\C_\delta[x,y]
\Big/
R\cdot\C_{\delta-d}[x,y],
\]
the total number of such sections being equal to:
\[
\footnotesize
\aligned
\sum_{m_1+\cdots+\kappa m_\kappa=m}
\bigg\{
\binom{m_1(d-3)+\cdots+m_\kappa(d-\kappa-2)+2}{2}
-
\binom{m_1(d-3)+\cdots+m_\kappa(d-\kappa-2)-d+2}{2}
\bigg\},
\endaligned
\]
that is to say, also asymptotically equal to:
\[
\frac{m^\kappa}{\kappa!\,\kappa!}\,
\Big[
d^2\,\log\,\kappa
+
d^2\,{\rm O}(1)
+
{\rm O}(d)
\Big]
+
{\rm O}\big(m^{\kappa-1}\big).
\]
\end{theorem}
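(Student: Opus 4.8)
The plan is to construct the jet differentials $\mathsf{J}_R^\lambda$ recursively by iterated differentiation of the defining relation $R \equiv 0$ along jets, to verify their compatibility across the two affine charts together with their vanishing at infinity, and finally to match the resulting monomials against the graded structure of the Green--Griffiths bundle so as to obtain both generation and the dimension count. First I would set up the recursion. For a holomorphic germ $\zeta \mapsto \big( x(\zeta), y(\zeta) \big)$ valued in $X^1$, the identity $R\big( x(\zeta), y(\zeta) \big) \equiv 0$ is differentiated $\lambda$ times; by the Fa\`a di Bruno formula the outcome has the shape
\[
0 \equiv R_x\,x^{(\lambda)} + R_y\,y^{(\lambda)} + Q_\lambda,
\]
where $Q_\lambda$ is an integer-coefficient polynomial in the lower jets $x^{(i)}, y^{(i)}$ with $1 \leqslant i \leqslant \lambda - 1$ and in the higher partials $R_{x^a y^b}$ with $a + b \geqslant 2$. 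On $\{ R_x \neq 0 \}$ I would isolate $y^{(\lambda)}/R_x$ and then substitute the previously built relations $\mathsf{J}_R^1, \dots, \mathsf{J}_R^{\lambda - 1}$ --- in particular the first-order one $R_y\,y' = -\,R_x\,x'$ --- to eliminate every $x$-jet in favour of $y$-jets. Since the only division performed is by the single monomial $R_x$, and since both the Fa\`a di Bruno multinomials and the substitutions are purely integral, the surviving coefficients lie in $\Z\big[ R_y/R_x,\, (R_{x^i y^j}/R_x) \big]$, which is precisely the universal-polynomial form $\mathcal{J}^\lambda_{\mu_1,\dots,\mu_{\lambda-1}}$ with coefficients in $\Z$.

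Next I would establish the \emph{perfect symmetry} and the holomorphicity. Running the same elimination on $\{ R_y \neq 0 \}$, with the roles of $x$ and $y$ interchanged, produces the second displayed form of $\mathsf{J}_R^\lambda$; because both forms descend from the one differentiated identity, their difference is a combination of the lower relations $\mathsf{J}_R^1, \dots, \mathsf{J}_R^{\lambda-1}$, all of which vanish identically on jets of curves lying in $X^1$, and so one symmetrizes by transferring these vanishing combinations across the equality sign, exactly as the excerpt does in passing from the unsymmetric to the symmetric order-two expression. For the behaviour at infinity I would transport through $(x,y) \mapsto (x/y, 1/y) =: (x_2, y_2)$ with $R_2(x_2,y_2) := (y_2)^d R\big( x_2/y_2,\, 1/y_2 \big)$: one computes $R_x = (y_2)^{1-d} R_{2,x_2}$ and, since $y = 1/y_2$, the leading behaviour $y^{(\lambda)} \sim (y_2)^{-\lambda-1}$, whence the leading term satisfies
\[
\frac{y^{(\lambda)}}{R_x} \,\sim\, (y_2)^{\,d-\lambda-2}\cdot(\text{holomorphic}).
\]
Thus $\mathsf{J}_R^\lambda$ is holomorphic and vanishes to order $d - \lambda - 2$ along the ample divisor $X^1 \cap \P_\infty^1$, the inequality $d \geqslant \kappa + 3 \geqslant \lambda + 3$ guaranteeing this order is positive.

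Finally I would deduce generation and the count. Because $\mathsf{J}_R^\lambda$ has leading term $y^{(\lambda)}/R_x$, the passage $\big( y^{(1)}, \dots, y^{(\kappa)} \big) \mapsto \big( \mathsf{J}_R^1, \dots, \mathsf{J}_R^\kappa \big)$ is, on $\{ R_x \neq 0 \}$, a weight-preserving triangular change of the fibre coordinates of $J^\kappa(\D, X^1)$; hence the weighted monomials $(\mathsf{J}_R^1)^{m_1} \cdots (\mathsf{J}_R^\kappa)^{m_\kappa}$ with $m_1 + 2m_2 + \cdots + \kappa m_\kappa = m$ form a frame for $\mathcal{E}_{\kappa,m}^{\rm GG} T_{X^1}^*$, so every section is a combination of them with function coefficients $\mathsf{G}$. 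Holomorphicity across $X^1 \cap \P_\infty^1$ forces the pole order of the polynomial $\mathsf{G}$ to be at most the total vanishing order $\sum_\lambda m_\lambda (d - \lambda - 2) =: \delta$ of the accompanying jet monomial, which is exactly the stated degree bound; thus $\mathsf{G}$ ranges over $\C_\delta[x,y]\big/ R\cdot\C_{\delta-d}[x,y] \cong H^0\big( X, \mathcal{O}_X(\delta) \big)$, of dimension $\binom{\delta+2}{2} - \binom{\delta-d+2}{2}$ by the vanishing $H^1\big( X, \mathcal{O}_X(t) \big) = 0$. Summing over all weights $m_1 + 2m_2 + \cdots + \kappa m_\kappa = m$ yields the displayed total, and the weighted Young-diagram asymptotics already invoked for the graded bundle deliver the leading term $\tfrac{m^\kappa}{\kappa!\,\kappa!}\big[ d^2 \log \kappa + d^2\,{\rm O}(1) + {\rm O}(d) \big]$.

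The main obstacle I anticipate is the inductive integral symmetrization underlying the first two paragraphs: one must prove that after each differentiation every cross term mixing $x$- and $y$-jets can \emph{always} be absorbed using only the already-constructed relations, leaving pure $y$-jets with coefficients in $\Z[R_{\cdot\cdot}/R_x]$ carrying a \emph{single} power of $R_x$ in the denominator, and that the result is genuinely invariant under the swap $x \leftrightarrow y$. Controlling this Fa\`a di Bruno bookkeeping --- ensuring that no spurious extra factor of $R_x$ or $R_y$ accumulates and that integrality survives every substitution --- is the delicate heart of the argument; once it is in place, the geometric steps (the vanishing order at infinity, the framing, and the Riemann--Roch count) follow along the lines already recorded in the excerpt.
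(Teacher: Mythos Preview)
Your proposal is correct and follows essentially the same approach as the paper: the paper itself does not prove the theorem in full (it cites \cite{Merker-2014}) but illustrates the method for $\lambda = 1, 2, 3$ via exactly your recursive differentiation, the key subtraction step ``to erase the division by $R_x$, multiply the first equation by $x'\,\frac{R_{xx}}{R_x}$ and subtract,'' the symmetrization, and the chart change $(x,y)\mapsto(x/y,1/y)$ to check behaviour at infinity. You have correctly identified the heart of the induction --- that only a \emph{single} power of $R_x$ survives in the denominator after each step --- as the point requiring care, which is precisely what the paper's order-$2$ subtraction trick addresses.
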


This generalizes directly to the case of complete
intersection {\em curves} $X^1 \subset \P^{1+c}(\C)$:
\[
0
=
R^1\big(z_1,\dots,z_c,z_{c+1}\big),\ \
\cdots\cdots\cdots,\ \ 
0
=
R^c\big(z_1,\dots,z_c,z_{c+1}\big),
\] 
minors of the Jacobian matrix naturally occupying denominator places:
\[
\frac{z_1'}{
\left\vert\!
\begin{array}{ccc}
R_{z_2}^1 & \cdots & R_{z_{c+1}}^1
\\
\cdot\cdot & \cdots & \cdot\cdot 
\\
R_{z_2}^c & \cdots & R_{z_{c+1}}^c
\end{array}
\!\right\vert}
\,=\,\cdots\cdots\,=\,
(-1)^c\,
\frac{z_{c+1}'}{
\left\vert\!
\begin{array}{ccc}
R_{z_1}^1 & \cdots & R_{z_c}^1
\\
\cdot\cdot & \cdots & \cdot\cdot 
\\
R_{z_1}^c & \cdots & R_{z_c}^c
\end{array}
\!\right\vert}.
\]

\medskip\noindent{\bf Holomorphic sections of the canonical bundle.}
For $X^n \subset \P^{ n+1} ( \C)$ a smooth hypersurface:
\[
0
=
R\big(z_1,\dots,z_n,z_{n+1}\big),
\] 
affinely represented as the zero-set of a degree $d \geqslant 1$ 
polynomial, whence:
\[
0
=
R_{z_1}\,dz_1
+\cdots+
R_{z_n}\,dz_n
+
R_{z_{n+1}}\,dz_{n+1},
\]
holomorphic sections of the {\sl canonical bundle} $K_X := \Lambda^n
T_X^*$ (which generalizes the cotangent $T_X^*$ of $X^1 \subset \P^2$), are
represented by the equalities:
\[
\frac{dz_1\wedge\cdots\wedge dz_n}
{R_{z_{n+1}}}
\,=\,
-\,
\frac{dz_1\wedge\cdots\wedge dz_{n-1}\wedge dz_{n+1}}
{R_{z_n}}
\,=\,
\cdots\cdots\cdots
\,=\,
(-1)^n\,
\frac{dz_2\wedge\cdots\wedge dz_{n+1}}
{R_{z_1}},
\]
that are always holomorphic on the $\P_\infty^n$ as soon as
$d \geqslant n+3$.

\medskip\noindent{\bf Question.}
{\em For $X^n \subset \P^{ n+c}$ a complete intersection $\{ 0 = R^1 =
\cdots = R^c\}$ of codimension $c$, are there explicit rational
holomorphic sections of $\mathcal{E}_{ \kappa, m}^{\rm GG} T_X^*$
having as denominators the appropriate minors of the Jacobian
matrix $\big(R_{z_k}^j\big)$ and numerators in $\Z \big[ R_{ 
z_1^{\alpha_1} \cdots z_{n+c}^{\alpha_{n+c}}}^j \big]$}?

\medskip

\medskip\noindent{\bf Surfaces $X^2 \subset \P^3$.}
Let $X^2 \subset \P^3$ be a smooth surface represented in affine
coordinates $(x, y, z) \in \C^3 \subset \P^3$ as:
\[
0
=
R(x,y,z),
\]
for some polynomial $R \in \C [x, y, z]$ of degree $d \geqslant 1$.
Differentiate this once:
\[
0
=
x'\,R_x
+
y'\,R_y
+
z'\,R_z.
\]
Three natural open sets $\{ R_x \neq 0 \}$, $\{ R_y \neq 0\}$,
$\{ R_z \neq 0\}$ cover $X^2$, by smoothness. On $X^2 \cap 
\{ R_x \neq 0\}$, coordinates are $(y, z)$, cotangent
(fiber) coordinates are $(y', z')$. The change of trivialization
for $T_X^* \cong J^1(\D, X)$
from above $X^2 \cap \{ R_x \neq 0\}$ to above
$X^2 \cap \{ R_y \neq 0\}$:
\[
\big(y,z,y',z'\big)
\,\,\longmapsto\,\,
\big(x,z,x',z'\big)
\]
amounts to just solving:
\[
y'
=
-\,x'\,
\frac{R_x}{R_y}
-
z'\,\frac{R_z}{R_y}.
\]
Inspired by what precedes for curves $X^1 \subset \P^2$,
seek global holomorphic sections of:
\[
\mathcal{E}_{1,m}^{\rm GG}T_X^*
\,\cong\,
{\sf Sym}^m\,T_X^*
\]
(symmetric differentials) under the form:
\[
\sum_{j+k=m}\,
{\sf coeff}_{j,k}
\cdot
(y')^j\,(z')^k
\,\,\,\,\,
\underset{\sf trivialization}{
\overset{{\sf change}\,\,{\sf of}}
{\longmapsto\rule[-3pt]{0pt}{11pt}}}
\,\,\,\,\,
\sum_{j+k=m}\,
{\sf coeff}_{j,k}^\sim
\cdot
(x')^j\,(z')^k,
\]
with all coefficient-functions ${\sf coeff}_{ j,k} ( y,z)$
being holomorphic on $X^2 \cap \{ R_x \neq 0\}$ and all 
${\sf coeff}_{j, k}^\sim ( x,z)$ being 
holomorphic on $X^2 \cap \{ R_y \neq 0\}$.
What sort of coefficients? A proposal of answer,
inspired by $X^1 \subset \P^2$, is that they belong to:
\[
\frac{1}{R_x}\,
\Z
\bigg[
\frac{R_y}{R_x},\,\frac{R_z}{R_x}
\bigg]
\ \ \ \ \ \ \ \ \ \ \ \ \
\text{\rm and to:}
\ \ \ \ \ \ \ \ \ \ \ \ \
\frac{1}{R_y}\,
\Z
\bigg[
\frac{R_x}{R_y},\,\frac{R_z}{R_y}
\bigg],
\]
because then, such jet differentials would vanish on the $\P_\infty^2$,
as soon as ${\sf deg}\, R \geqslant 2 m + 2$. Of course, 
in this special case, since the intrinsic theory
(\cite{ Diverio-Rousseau-2011, Brotbek-2011}) kowns that
whenever $\kappa < \frac{ n}{ c}$, 
on a complete intersection $X^n \subset \P^{ n + c}$:
\[
0
=
H^0\big(X,\,\mathcal{E}_{\kappa,m}^{\rm GG}T_X^*\big),
\]
whence with $n = 2$, $c = 1$ here, inexistence is expectable. 
As a confirmation:

\begin{proposition}
For every $m \geqslant 1$, if polynomials $\Pi_{j, k} \in \Z [
{\sf U}, {\sf V} ]$ are such that:
\[
\aligned
\sum_{j+k=m}\,
\frac{(y')^j\,(z')^k}{R_x}\,
\Pi_{j,k}
\bigg(
\frac{R_y}{R_x},\,\frac{R_z}{R_x}
\bigg)
&
\,=\,
\sum_{j+k=m}\,
\frac{\big(-\,x'\,\frac{R_x}{R_y}
-
z'\,\frac{R_z}{R_y}\big)^j\,(z')^k}{R_x}\,
\Pi_{j,k}
\bigg(
\frac{R_y}{R_x},\,\frac{R_z}{R_x}
\bigg)
\\
&
\,=\,
\sum_{j+k=m}\,
\frac{(x')^j\,(z')^k}{R_y}\,
\Pi_{j,k}^\sim\,
\bigg(
\frac{R_x}{R_y},\,\frac{R_z}{R_y}
\bigg)
\endaligned
\]
rewrites, after transitioning, only with $\frac{ 1}{ R_y}$-denominators, 
then all $\Pi_{j, k} \equiv 0$.
\end{proposition}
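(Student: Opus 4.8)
The plan is to strip the stated rational identity down to a purely algebraic divisibility question in two auxiliary indeterminates, and then to close it with two complementary degree estimates. First I would make the substitution $y' = -\,x'\,\frac{R_x}{R_y} - z'\,\frac{R_z}{R_y}$ explicit, expand $(y')^j$ by the binomial theorem, and read off, for each fixed $p \in \{0,\dots,m\}$, the coefficient of the fibre monomial $(x')^p(z')^{m-p}$ on the left-hand side. Introducing $U := \frac{R_y}{R_x}$ and $V := \frac{R_z}{R_x}$ --- which may be treated as independent indeterminates, since for a smooth surface the Gauss map $p \mapsto [R_x : R_y : R_z]$ is generically finite onto its image, so the two ratios are algebraically independent on $X$ --- the decisive bookkeeping observation is that the overall factor $R_x$ cancels identically, leaving, with the abbreviation $\Phi_j := \Pi_{j,m-j}$, the relation
\[
\sum_{j=p}^m \binom{j}{p} (-1)^j \frac{V^{j-p}}{U^j}\, \Phi_j(U,V) \;=\; \frac{1}{U}\,\Pi_{p,m-p}^\sim\Big(\tfrac1U,\,\tfrac VU\Big).
\]

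Next I would note that, as $\Pi^\sim_{p,m-p}$ ranges over all polynomials, the right-hand side describes \emph{exactly} those Laurent polynomials every monomial $U^aV^b$ of which satisfies $b \geq 0$ and $a+b \leq -1$. Since the left-hand side automatically carries $b \geq 0$, the entire force of the hypothesis is that the ``total degree $\geq 0$'' part of the left-hand side vanishes, for each $p = 0,\dots,m$. Because multiplication by $V^{j-p}/U^j$ lowers total degree by $p$, these requirements decouple across the total degree $e$ of the homogeneous pieces $\Phi_j^{(e)}$; writing $\Phi_j^{(e)}(U,V) = U^e\,\tilde C_j(V/U)$ with $\deg \tilde C_j \leq e$ and setting $W := V/U$, the hypothesis collapses, for each $e \geq 0$ and each $p \leq \min(e,m)$, to the one-variable polynomial identities
\[
G_p(W) := \sum_{j=p}^m \binom{j}{p} (-1)^j W^{j-p}\, \tilde C_j(W) \equiv 0.
\]

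I would then package these into the single generating polynomial $\hat H(W,T) := \sum_{j=0}^m (-1)^j \tilde C_j(W)\,T^j$, whose Taylor expansion about $T=W$ reads $\hat H(W,T) = \sum_p G_p(W)\,(T-W)^p$; consequently the vanishing of $G_0,\dots,G_{\min(e,m)}$ is precisely the divisibility $(T-W)^{\min(e,m)+1}\mid\hat H$. The endgame is a degree count in $\C[W,T]$, using $\deg_T \hat H \leq m$ and $\deg_W \hat H \leq e$ against a divisor whose degree is $\min(e,m)+1$ in each variable separately. The step I expect to be the genuine obstacle --- and the reason no one-line argument works --- is that for small total degree $e<m$ the hypothesis does \emph{not} furnish $G_p=0$ for all $p$ up to $m$, so the naive reasoning ``$(T-W)^{m+1}\mid\hat H$ while $\deg_T\hat H\le m$'' is simply unavailable.

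The resolution is to split on the complementary bound. When $e \geq m$ one has $\min(e,m)+1 = m+1 > m \geq \deg_T\hat H$, and the $T$-degree forces $\hat H \equiv 0$. When $e < m$ the divisor $(T-W)^{e+1}$ already has $W$-degree $e+1 > e \geq \deg_W\hat H$, so $\hat H \equiv 0$ because $\deg_W$ is additive over the integral domain $\C[T]$ (the leading $W$-coefficient of $(T-W)^{e+1}$ is the unit $(-1)^{e+1}$). In either regime $\hat H \equiv 0$, whence every $\tilde C_j \equiv 0$ by independence of the powers $T^j$, so every homogeneous piece $\Phi_j^{(e)}$ vanishes; letting $e$ vary yields $\Pi_{j,k} \equiv 0$ for all $j,k$, in harmony with the intrinsic vanishing $H^0\big(X,\,{\sf Sym}^m T_X^*\big)=0$ recorded above.
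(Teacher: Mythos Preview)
Your argument is correct. The paper itself does not supply a proof of this proposition: it is stated as an explicit ``confirmation'' of the intrinsic vanishing $H^0\big(X,\mathcal{E}_{1,m}^{\rm GG}T_X^*\big)=0$ for $X^2\subset\P^3$, and the text moves on immediately afterward. So there is nothing to compare against beyond the fact that the paper implicitly regards the result as following from the known cohomological vanishing, whereas you give a self-contained elementary verification working directly with the transition rule.

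Your reduction is clean: the cancellation of the overall $R_x$, the identification of the range of $\Pi^\sim\mapsto\frac{1}{U}\Pi^\sim(1/U,V/U)$ with the Laurent monomials of total degree $\leq -1$, and the decoupling across homogeneous pieces $\Phi_j^{(e)}$ are all correct. The generating-function repackaging $\hat H(W,T)=\sum_j(-1)^j\tilde C_j(W)T^j$ together with the two-sided degree count (in $T$ when $e\geq m$, in $W$ when $e<m$) handles precisely the subtlety you flagged, namely that for small $e$ one only knows $G_0=\cdots=G_e=0$ rather than all $G_p$. The justification that $U,V$ may be treated as independent indeterminates via generic finiteness of the Gauss map is adequate here; alternatively, since the $\Pi_{j,k}$ are \emph{a priori} polynomials in abstract variables ${\sf U},{\sf V}$, one may simply regard the whole computation as taking place in $\Z[{\sf U}^{\pm1},{\sf V}]$ from the outset, which sidesteps any appeal to the geometry of $X$.
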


However, the intrinsic theory knows already (\cite{ Siu-Yeung-1996,
Demailly-1997, Siu-2002, Siu-2004, Rousseau-2007-Toulouse,
Diverio-Merker-Rousseau-2010, Diverio-Rousseau-2011, Brotbek-2011})
that for $X^n \subset \P^{n + c}$, global holomorphic sections of
$\mathcal{ E}_{ \kappa, m}^{\rm GG} T_X^*$ exist when $\kappa
\geqslant \frac{ n}{c}$.

Hence for $n = 2$, $c = 1$, $\kappa = 2$, differentiate 
{\em twice:}
\[
\aligned
0
&
=
x'\,R_x+y'\,R_y+z'\,R_z,
\\
0
&
=
x''\,R_x+y''\,R_y+z''\,R_z
+
(x')^2\,R_{xx}
+
(y')^2\,R_{yy}
+
(z')^2\,R_{zz}
+
2x'y'\,R_{xy}
+
2x'z'\,R_{xz}
+
2y'z'\,R_{yz}.
\endaligned
\]
The interesting question (to which no answer is known not up to date)
is whether there exist holomorphic
jet differentials of the rational form:
\[
\sum_{j_1+k_1+j_2+k_2=m}\,
\frac{(y')^{j_1}\,(z')^{k_1}\,(y'')^{j_2}\,(z'')^{k_2}}{R_x}
\cdot
\Pi_{j_1k_1j_2k_2}
\bigg(
\frac{R_y}{R_x},\,
\frac{R_z}{R_x},\,
\frac{R_{xx}}{R_x},\,
\frac{R_{yy}}{R_x},\,
\frac{R_{zz}}{R_x},\,
\frac{R_{xy}}{R_x},\,
\frac{R_{xz}}{R_x},\,
\frac{R_{yz}}{R_x}
\bigg),
\]
having the property that, after replacement of:
\[
\aligned
y'
&
=
-x'\,\frac{R_x}{R_y}
-
z'\,\frac{R_z}{R_y},
\\
y''
&
=
-x''\,\frac{R_x}{R_y}
-
z''\,\frac{R_z}{R_y}
-
(x')^2\,\frac{R_{xx}}{R_y}
-
(y')^2\,\frac{R_{yy}}{R_y}
-
(z')^2\,\frac{R_{zz}}{R_y}
-
2x'y'\,\frac{R_{xy}}{R_y}
-
2x'z'\,\frac{R_{xz}}{R_y}
-
2y'z'\,\frac{R_{yz}}{R_y},
\endaligned
\]
after expansion and after reorganization,
a similar jet-rational expression is got:
\[
\sum_{j_1+k_1+j_2+k_2=m}\,
\frac{(x')^{j_1}\,(z')^{k_1}\,(y'')^{j_2}\,(z'')^{k_2}}{R_y}
\cdot
\Pi_{j_1k_1j_2k_2}^\sim
\bigg(
\frac{R_x}{R_y},\,
\frac{R_z}{R_y},\,
\frac{R_{xx}}{R_y},\,
\frac{R_{yy}}{R_y},\,
\frac{R_{zz}}{R_y},\,
\frac{R_{xy}}{R_y},\,
\frac{R_{xz}}{R_y},\,
\frac{R_{yz}}{R_y}
\bigg)
\]
which involves division by only $R_y$.
The number of variables becomes $8$ (large):
\[
\aligned
\Pi_{j_1k_1j_2k_2}
&
\,\in\,
\frac{1}{R_x}
\cdot
\Z
\bigg[
\frac{R_y}{R_x},\,
\frac{R_z}{R_x},\,
\frac{R_{xx}}{R_x},\,
\frac{R_{yy}}{R_x},\,
\frac{R_{zz}}{R_x},\,
\frac{R_{xy}}{R_x},\,
\frac{R_{xz}}{R_x},\,
\frac{R_{yz}}{R_x}
\bigg],
\\
\Pi_{j_1k_1j_2k_2}^\sim
&
\,\in\,
\frac{1}{R_y}
\cdot
\Z
\bigg[
\frac{R_x}{R_y},\,
\frac{R_z}{R_y},\,
\frac{R_{xx}}{R_y},\,
\frac{R_{yy}}{R_y},\,
\frac{R_{zz}}{R_y},\,
\frac{R_{xy}}{R_y},\,
\frac{R_{xz}}{R_y},\,
\frac{R_{yz}}{R_y}
\bigg].
\endaligned
\]
By anticipation, for $X^n \subset \P^{n+1}$ and for jets of order
$\kappa = n$:
\[
\#
\Big(
\text{\rm partial derivatives}\,
R_{z_1^{\alpha_1}\cdots z_n^{\alpha_n}z_{n+1}^{\alpha_{n+1}}}
\Big)
\,=\,
\binom{n+1+n}{n}
\,\sim\,
2^{2n+1}\,
\frac{1}{\sqrt{\pi n}},
\]
hence something is {\em intimately exponential} in the subject.
For $X^2 \subset \P^3$ of degree $d \gg 1$, a patient cohomology
sequences chasing shows that there exist nonzero second-order
holomorphic jet differentials in $H^0 \big( X, \mathcal{ E}_{2,
m}^{\rm GG} T_X^* \big)$ only when:
\[
m
\geqslant
14
\]
(similarly, by \cite{ Brotbek-2011}, for $X^2 \subset \P^4$ of bidegrees
$d_1, d_2 \gg 1$, it is necessary that 
$m \geqslant 10$). Hence combinatorially,
there is a complexity obstacle, and moreover, 
an inspection
of what holds true for curves 
$X^1 \subset \P^2$ shows that it is quite probable
that the degrees of the $\Pi_{ j_1k_1j_2k_2}$ 
are about to be approximately equal to $m \geqslant
14$, whence the total number of monomials they involve:
\[
\binom{14+8}{8}
=
319\,770,
\]
would be already rather large to determine 
in a really effective way whether they exist. 

It happens to be a bit easier to work with the Wronskians:
\[
\aligned
\square
:=
\left\vert\!
\begin{array}{cc}
y' & z'
\\
y'' & z''
\end{array}
\!\right\vert
=
y'z''-z'y'',
\ \ \ \ \ \ \ \ \ \ \ \ \ \ \ \ \ \ \ \ \ 
\Delta
:=
\left\vert\!
\begin{array}{cc}
z' & x'
\\
z'' & x''
\end{array}
\!\right\vert
=
z'x''-x'z''.
\endaligned
\]
Two fundamental transition formulas are:
\[
\aligned
\frac{y'}{R_x}
&
\,\,=\,\,
-\,\frac{x'}{R_y}
-
\frac{z'}{R_y}\,
\frac{R_z}{R_x},
\\
\frac{
\left\vert\!
\begin{array}{cc}
y' & z'
\\
y'' & z''
\end{array}
\!\right\vert
}{R_x}
&
\,\,=\,\,
\frac{\left\vert\!
\begin{array}{cc}
z' & x'
\\
z'' & x''
\end{array}
\!\right\vert}{R_y}
-
\frac{(x')^2z'}{R_y}\,
\bigg[
\bigg(\!
\frac{R_x}{R_y}
\!\bigg)
\frac{R_{yy}}{R_y}
-
2\,\frac{R_{xy}}{R_y}
+
\frac{R_{xx}}{R_x}
\bigg]
-
\\
&
\ \ \ \ \ \ \ \ \ \ \ \ \ \ \
-
2\,\frac{x'(z')^2}{R_y}
\bigg[
\bigg(\!
\frac{R_z}{R_y}
\!\bigg)
\frac{R_{yy}}{R_y}
-
\frac{R_{yz}}{R_y}
+
\frac{R_{xz}}{R_x}
-
\bigg(\!
\frac{R_z}{R_y}
\!\bigg)
\frac{R_{xy}}{R_x}
\bigg]
-
\\
&
\ \ \ \ \ \ \ \ \ \ \ \ \ \ \ \ \
-
\frac{(z')^3}{R_y}
\bigg[
\bigg(\!
\frac{R_z}{R_y}
\!\bigg)^2
\frac{R_{yy}}{R_x}
-
2\bigg(\!
\frac{R_z}{R_y}
\!\bigg)
\frac{R_{yz}}{R_x}
+
\frac{R_{zz}}{R_x}
\bigg].
\endaligned
\]
Set as abbreviated new notations:
\[
\aligned
\frac{R_x}{R_y}
&
=:
r_x,
\ \ \ \ \ \ \ \ \ \ \ \ \ \ \ \ \
\frac{R_z}{R_y}
=:
r_z,
\\
\frac{R_{xx}}{R_y}
&
=:
r_{yy},
\ \ \ \ \ \ \ \ \ \ \ \ \ \ \ \ \
\frac{R_{xy}}{R_y}
=:
r_{xy},
\\
\frac{R_{yy}}{R_y}
&
=:
r_{zz},
\ \ \ \ \ \ \ \ \ \ \ \ \ \ \ \ \
\frac{R_{xz}}{R_y}
=:
r_{xz},
\\
\frac{R_{zz}}{R_y}
&
=:
r_{xx},
\ \ \ \ \ \ \ \ \ \ \ \ \ \ \ \ \
\frac{R_{yz}}{R_y}
=:
r_{yz}.
\endaligned
\]
Rewrite:
\[
\aligned
y'
&
=
-x'\,r_x
-
z'\,r_z,
\\
\square
&
=
\Delta\,r_x
-
(x')^2z'
\Big[
r_x\,r_x\,r_{yy}
-
2\,r_x\,r_{xy}
+
r_{xx}
\Big]
-
\\
&
\ \ \ \ \ \ \ \ \ \ 
-
2\,x'(z')^2
\Big[
r_x\,r_z\,r_{yy}
-
r_x\,r_{yz}
+
r_{xz}
-
r_z\,r_{xy}
\Big]
-
\\
&
\ \ \ \ \ \ \ \ \ \ \ \ \ \ \ 
-
(z')^3
\Big[
r_z\,r_z\,r_{yy}
-
2\,r_z\,r_{yz}
+
r_{zz}
\Big].
\endaligned
\]
Divide both sides by:
\[
R_x
=
r_x\,R_y,
\]
and obtain:
\[
\aligned
\frac{y'}{R_x}
&
=
-\,\frac{x'}{R_y}
-
\frac{z'}{R_y}\,
\frac{r_z}{r_x},
\\
\frac{\square}{R_x}
&
=
\frac{\Delta}{R_y}
-
\frac{(x')^2z'}{R_y}
\Big[
r_x\,r_{yy}
-
2\,r_{xy}
+
\frac{r_{xx}}{r_x}
\Big]
-
\\
&
\ \ \ \ \ \ \ \ 
-
2\,\frac{x'(z')^2}{R_y}
\Big[
r_z\,r_{yy}
-
r_{yz}
+
\frac{r_{xz}}{r_x}
-
\frac{r_z\,r_{xy}}{r_x}
\Big]
-
\\
&
\ \ \ \ \ \ \ \ \
\frac{(z')^3}{R_y}
\Big[
\frac{r_z\,r_z\,r_{yy}}{r_x}
-
2\,\frac{r_z\,r_{yz}}{r_x}
+
\frac{r_{zz}}{r_x}
\Big].
\endaligned
\]
In the case $n = 2 = \kappa$, $c = 1$, the question formulated
above becomes:

\medskip\noindent{\bf Question.} 
{\em Do there exist nontrivial linear combinations of:}
\[
\aligned
&
\frac{
\big(-x'r_x-z'r_z)^j(z')^k
\left(
\aligned
\Delta r_x
&
-
(x')^2z'
\big[
r_xr_xr_{yy}-2r_xr_{xy}+r_{xx}
\big]
\\
&
-
2x'(z')^2\big[
r_xr_zr_{yy}
-
r_xr_{yz}
+
r_{xz}
-
r_zr_{xy}
\big]
-
\\
&
-
(z')^3\big[
rr_zr_zr_{yy}-2r_zr_{yz}+r_{zz}
\big]
\endaligned
\right)^l
}{
R_x\,r_x}\,
\times
\\
&
\times
\bigg(\!
\frac{1}{r_x}
\!\bigg)^a\,
\bigg(\!
\frac{r_z}{r_x}
\!\bigg)^b\,
\bigg(\!
\frac{r_{xx}}{r_x}
\!\bigg)^c\,
\bigg(\!
\frac{r_{yy}}{r_x}
\!\bigg)^d\,
\bigg(\!
\frac{r_{zz}}{r_x}
\!\bigg)^e\,
\bigg(\!
\frac{r_{xy}}{r_x}
\!\bigg)^f\,
\bigg(\!
\frac{r_{xz}}{r_x}
\!\bigg)^g\,
\bigg(\!
\frac{r_{yz}}{r_x}
\!\bigg)^h,
\endaligned
\]
{\em for some nonnegative integers:}
\[
j,\,k,\,l,
\ \ \ \ \
a,\,b,\,c,\,d,\,e,\,f,\,g,\,h,
\]
{\em in which any $\frac{ 1}{r_x}$ would have disappeared?}

\subsection{Slanted vector fields}

To construct holomorphic jet differentials on a hypersurface
$X^n \subset \P^{ n+1}$ defined as:
\[
0
=
R(z_1,\dots,z_n,z_{n+1})
=
\sum_{\alpha_1+\cdots+\alpha_n+\alpha_{n+1}\leqslant d}\,
a_{\alpha_1\dots\alpha_n\alpha_{n+1}}\,
z_1^{\alpha_1}
\cdots
z_n^{\alpha_n}
z_{n+1}^{\alpha_{n+1}},
\]
two strategies exist, the first one (still open) being to work
(only) in the ring (of fractions) of all partial derivatives of $R$:
\[
\Z
\Big[
\Big(
R_{z_1^{\beta_1}\cdots z_{n+1}^{\beta_{n+1}}}
\Big)_{\beta_1+\cdots+\beta_{n+1}\leqslant\kappa}
\Big],
\]
and the second one (currently active) 
being to work in the ring of coefficients:
\[
\Z
\Big[
\Big(
a_{\alpha_1\dots\alpha_{n+1}}
\Big)_{\alpha_1+\cdots+\alpha_{n+1}\leqslant d}
\Big].
\]

For instance, differentiate $0 = \sum_{\alpha}\, a_\alpha\, z^\alpha$
up to order, say, $4$:
\[
\aligned
0
&
=
\sum_\alpha\,
a_\alpha\,z^\alpha
\\
0
&
=
\sum_\alpha\,a_\alpha
\bigg(
\sum_{j_1}\,\frac{\partial (z^\alpha)}{\partial z_{j_1}}\,z_{j_1}'
\bigg)
\\
0
&
=
\sum_{\alpha}\,a_\alpha
\bigg(
\sum_{j_1}\,\frac{\partial (z^\alpha)}{\partial z_{j_1}}\,z_{j_1}''
+
\sum_{j_1,\,j_2}\,
\frac{\partial^2 (z^\alpha)}{\partial z_{j_1}\partial z_{j_2}}\,
z_{j_1}'z_{j_2}'
\bigg)
\\
0
&
=
\sum_\alpha\,a_\alpha\,
\bigg(
\sum_{j_1}\,\frac{\partial (z^\alpha)}{\partial z_{j_1}}\,z_{j_1}'''
+
\sum_{j_1,\,j_2}\,
\frac{\partial^2(z^\alpha)}{\partial z_{j_1}\partial z_{j_2}}\,
3\,z_{j_1}'z_{j_2}''
+
\sum_{j_1,\,j_2,\,j_3}\,
\frac{\partial^3(z^\alpha)}{\partial z_{j_1}\partial z_{j_2}\partial
z_{j_3}}\,
z_{j_1}'z_{j_2}'z_{j_3}'
\bigg)
\\
0
&
=
\sum_\alpha\,a_\alpha
\bigg(
\sum_{j_1}\,
\frac{\partial (z^\alpha)}{\partial z_{j_1}}\,z_{j_1}''''
+
\sum_{j_1,\,j_2}\,
\frac{\partial^2(z^\alpha)}{\partial z_{j_1}\partial z_{j_2}}
\big(
4\,z_{j_1}'z_{j_2}'''
+
3\,z_{j_1}''z_{j_2}''
\big)
+
\\
&
\ \ \ \ \ \ \ \ \ \ \ \ \ \ \ \ \
+
\sum_{j_1,\,j_2,\,j_3}\,
\frac{\partial^3(z^\alpha)}{\partial z_{j_1}
\partial z_{j_2}\partial z_{j_3}}\,
6\,z_{j_1}'z_{j_2}'z_{j_3}''
+
\sum_{j_1,\,j_2,\,j_3,\,j_4}\,
\frac{\partial^4(z^\alpha)}{\partial z_{j_1}\partial
z_{j_2}\partial z_{j_3}\partial z_{j_4}}\,
z_{j_1}'z_{j_2}'z_{j_3}'z_{j_4}'
\bigg).
\endaligned
\]

\begin{lemma}
{\rm (\cite{ Merker-2009})}
The equation obtained by differentiating the condition
$R\big( f(\zeta)\big) \equiv 0$ up to an arbitrary order
$\kappa \geqslant 1$ reads in closed form as follows:
\[
\aligned
\!\!\!0
&
=\!\!\!
\sum_{\alpha\in\N^{n+1}}\!\!
a_{\alpha}\,\,\,
\sum_{e=1}^\kappa\,
\sum_{1\leqslant\lambda_1<\cdots<\lambda_e\leqslant\kappa}\,
\sum_{\mu_1\geqslant 1,\dots,\mu_e\geqslant 1}\,
\sum_{\mu_1\lambda_1+\cdots+\mu_e\lambda_e=\kappa}\,
\frac{\kappa!}{(\lambda_1!)^{\mu_1}\mu_1!\cdots
(\lambda_e!)^{\mu_e}\mu_e!}\,
\\
\!\!\!
&
\sum_{j_1^1,\dots,j_{\mu_1}^1=1}^{n+1}\!\cdots\!
\sum_{j_1^e,\dots,j_{\mu_e}^e=1}^{n+1}
\frac{\partial^{\mu_1+\cdots+\mu_e}\big(z^\alpha\big)}{\partial
z_{j_1^1}\cdots\partial z_{j_{\mu_1}^1}\cdots
\partial z_{j_1^e}\cdots\partial z_{j_{\mu_e}^e}}\,
z_{j_1^1}^{(\lambda_1)}\cdots z_{j_{\mu_1}^1}^{(\lambda_1)}
\cdots
z_{j_1^e}^{(\lambda_e)}\cdots z_{j_{\mu_e}^e}^{(\lambda_e)}.
\endaligned
\]
\end{lemma}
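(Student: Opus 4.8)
The plan is to recognize this identity as the multivariate Fa\`a di Bruno formula for the composite $\zeta \mapsto R(f(\zeta))$, where the inner map $f = (z_1, \ldots, z_{n+1})$ depends on the single variable $\zeta$ while the outer function is the polynomial $R = \sum_\alpha a_\alpha z^\alpha$. First I would use linearity of $d^\kappa/d\zeta^\kappa$ to pull the sum over multi-indices $\alpha$ and the constant coefficients $a_\alpha$ outside, reducing the entire statement to the computation of a single monomial derivative $\frac{d^\kappa}{d\zeta^\kappa}\big(z^\alpha(f(\zeta))\big)$, which no longer involves $R$.

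Next I would establish the general chain-rule expansion
\[
\frac{d^\kappa}{d\zeta^\kappa}\,z^\alpha(f(\zeta))
=
\sum_{\pi}\,
\sum_{j\colon\pi\to\{1,\dots,n+1\}}
\frac{\partial^{|\pi|}(z^\alpha)}{\prod_{B\in\pi}\partial z_{j(B)}}\,
\prod_{B\in\pi}\,z_{j(B)}^{(|B|)},
\]
where $\pi$ ranges over all set partitions of $\{1,\dots,\kappa\}$, the symbol $|\pi|$ denotes the number of blocks, and to each block $B$ one assigns an index $j(B)\in\{1,\dots,n+1\}$. This is cleanest to prove by induction on $\kappa$: the base case $\kappa=1$ is the ordinary chain rule, and the inductive step applies $d/d\zeta$ termwise, using that differentiating the product either raises one factor $z_{j(B)}^{(|B|)}$ to $z_{j(B)}^{(|B|+1)}$ (enlarging a block of $\pi$) or creates a fresh factor $z_{j'}'$ together with one more partial derivative (adjoining $\kappa+1$ as a new singleton block). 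These two operations realize exactly the standard recursion producing every set partition of $\{1,\dots,\kappa+1\}$ from those of $\{1,\dots,\kappa\}$.

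The last step is to pass from the sum over set partitions to the sum over block-size profiles appearing in the statement. I would group the partitions $\pi$ according to the data $(\lambda_1<\cdots<\lambda_e;\,\mu_1,\dots,\mu_e)$ recording that $\pi$ has exactly $\mu_i$ blocks of cardinality $\lambda_i$; the constraint $\mu_1\lambda_1+\cdots+\mu_e\lambda_e=\kappa$ then holds automatically. Within each group the index sum over blocks becomes the iterated sums $\sum_{j_1^i,\dots,j_{\mu_i}^i=1}^{n+1}$, and since mixed partials of the monomial $z^\alpha$ commute, the partial-derivative operator depends only on this multiset of indices, yielding the displayed derivative of total order $\mu_1+\cdots+\mu_e=|\pi|$. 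The number of set partitions sharing a fixed profile is the multinomial count $\frac{\kappa!}{(\lambda_1!)^{\mu_1}\mu_1!\cdots(\lambda_e!)^{\mu_e}\mu_e!}$, obtained by distributing the $\kappa$ labels, then dividing by the $(\lambda_i!)^{\mu_i}$ internal orderings inside the blocks of size $\lambda_i$ and by the $\mu_i!$ permutations among the equal-size blocks.

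The main obstacle I anticipate is purely bookkeeping: matching the combinatorial coefficient across the inductive step, equivalently verifying the multinomial count of partitions of each fixed profile, and keeping the two distinct roles of an index cleanly separated, namely its role as the differentiation variable in $\partial z_{j}$ and its role as the carrier of the derivative order in the factor $z_j^{(\lambda_i)}$. Once the set-partition form of Fa\`a di Bruno is in hand, no analysis beyond smoothness of $f$ and the polynomiality of $R$ is required, so the entire content reduces to the combinatorial regrouping just described.
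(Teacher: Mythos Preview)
Your approach is correct: the identity is precisely the multivariate Fa\`a di Bruno formula for $\frac{d^\kappa}{d\zeta^\kappa}R(f(\zeta))$, and your three steps (linearity in the $a_\alpha$, the set-partition chain rule by induction, then regrouping partitions by block-size profile to produce the multinomial weight $\kappa!/\prod_i(\lambda_i!)^{\mu_i}\mu_i!$) constitute a complete proof. The only point to be slightly careful about in writing it up is the one you already flagged: once you fix a profile, the value of the inner sum over index assignments $j$ is the same for every set partition with that profile, because mixed partials of $z^\alpha$ commute and each factor $z_{j}^{(|B|)}$ depends on $B$ only through $|B|$; this is what justifies pulling the partition count out as a scalar multiplier.

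As for comparison with the paper: the paper does not actually prove this lemma. It displays the cases $\kappa=1,2,3,4$ explicitly immediately before the statement, then states the general formula with a citation to~\cite{Merker-2009} and moves on. So there is nothing to compare your argument against here; your Fa\`a di Bruno derivation is the expected one and would serve perfectly well as the omitted proof.
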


These equations for $\kappa = 0, 1, \dots, \kappa$ define a certain
(projectivizable) subvariety: 
\[
J_{\rm vert}^\kappa \subset
\C_{(z_k)}^{n+1} 
\times 
\C_{(a_\alpha)}^{\frac{(n+1+d)!}{(n+1)!\,\,d!}},
\] 
complete intersection of codimension $\kappa + 1$
outside $\{ z_1' = \cdots = z_{n+1}' = 0\}$.
Vector fields tangent to $J_{\rm vert}^\kappa$
write under the general form:
\[
\aligned
{\sf T}
=
\sum_{i=1}^{n+1}\,
{\sf Z}_i\,\frac{\partial}{\partial z_i}
+
\sum_{\alpha\in\N^{n+1}\atop\vert\alpha\vert\leqslant d}\,
{\sf A}_\alpha\,
\frac{\partial}{\partial a_\alpha}
+
\sum_{k=1}^{n+1}\,{\sf Z}_k'\,\frac{\partial}{\partial z_k'}
+
\sum_{k=1}^{n+1}\,{\sf Z}_k''\,\frac{\partial}{\partial z_k''}
+\cdots+
\sum_{k=1}^{n+1}\,{\sf Z}_k^{(\kappa)}\,
\frac{\partial}{\partial z_k^{(\kappa)}}.
\endaligned
\]

Notably, the next theorem works with the quotient ring of 
$\Z [ a_\alpha]$, not of $\Z [ R_{ z^\beta}]$.

\begin{theorem}
{\rm (\cite{ Siu-2004, Merker-2009, Mourougane-2012, Siu-2012})}
\label{slanted-vf}
With $\kappa \leqslant d$, at every point of $J_{\sf vert}^\kappa \big
\backslash \{ z_i' = 0\}$, there exist $j_{n,\kappa}^d := \dim\, J_{\rm
vert}^\kappa$ global holomorphic sections ${\sf T}_1, \dots, {\sf
T}_{ j_{n,\kappa}^d}$ of the twisted tangent bundle:
\[
T_{J_{\rm vert}^\kappa}
\otimes
\mathcal{O}_{\P^{n+1}}
\big(
\kappa^2+2\,\kappa
\big)
\otimes
\mathcal{O}_{\P^{\frac{(n+1+d)!}{(n+1)!\,\,d!}-1}}
(1),
\]
which generate the tangent space:
\[
\C{\sf T}_1
\big\vert_p
\oplus\cdots\oplus
\C{\sf T}_{j_{n,\kappa}^d}
\big\vert_p
=
T_{J_{\sf vert}^n,\,p}.
\]
\end{theorem}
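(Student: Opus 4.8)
\smartqed
The plan is to build the sections ${\sf T}_\bullet$ by exploiting that the $\kappa+1$ polynomials cutting out $J^\kappa_{\sf vert}$ depend \emph{linearly} on the coefficients $a_\alpha$. Writing the closed-form equations of the preceding Lemma as $\Sigma_j := \sum_\alpha a_\alpha\, P_j^\alpha = 0$ for $j = 0, 1, \dots, \kappa$, where $P_j^\alpha := {\sf D}^j\big(z^\alpha\big)$ is the $j$-th total derivative of the monomial $z^\alpha$ (with ${\sf D} := \sum_{l\geqslant 0}\sum_k z_k^{(l+1)}\,\partial/\partial z_k^{(l)}$ and $z_k^{(0)} = z_k$, so that $\Sigma_0 = R$), one sees that a field ${\sf T}$ of the displayed form, with horizontal part ${\sf T}_{\sf hor} := \sum_i {\sf Z}_i\,\partial_{z_i} + \sum_{k,l} {\sf Z}_k^{(l)}\,\partial_{z_k^{(l)}}$, is tangent to $J^\kappa_{\sf vert}$ if and only if
\[
\sum_\alpha {\sf A}_\alpha\, P_j^\alpha
=
-\,\sum_\alpha a_\alpha\, {\sf T}_{\sf hor}\big(P_j^\alpha\big)
\qquad (j = 0, 1, \dots, \kappa),
\]
the right-hand side being again \emph{linear} in the $a_\alpha$ and free of them inside its $P$-factors.

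The key algebraic input is that, fixing a coordinate $z_i$, the $(\kappa+1)\times(\kappa+1)$ matrix $\big(P_j^{\,e\,\varepsilon_i}\big)_{0\leqslant j,\,e\leqslant\kappa}$ built from the pure powers $z^\alpha = z_i^e$ — which are \emph{legitimate} coefficients precisely because $e\leqslant\kappa\leqslant d$ — is the Wronskian matrix of $1, z_i, \dots, z_i^\kappa$, whose determinant is
\[
\Omega_i
=
{\sf const}_\kappa \cdot \big(z_i'\big)^{\binom{\kappa+1}{2}},
\]
nonzero exactly on $\{z_i'\neq 0\}$. Hence on the complement of $\{z_1'=\cdots=z_{n+1}'=0\}$ some $z_{i_0}'\neq 0$, and there I may solve the linear system for the $\kappa+1$ unknowns ${\sf A}_{e\,\varepsilon_{i_0}}$ by Cramer's rule (all other ${\sf A}_\alpha$ set to $0$) for an \emph{arbitrary} prescribed polynomial ${\sf T}_{\sf hor}$; the resulting ${\sf A}_\alpha$ are linear in $a$ with sole denominator $\Omega_{i_0}$. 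Taking ${\sf T}_{\sf hor}$ successively equal to each $\partial_{z_i}$ and each $\partial_{z_k^{(l)}}$, and adjoining the fibre fields $\sum_\alpha {\sf B}_\alpha\,\partial_{a_\alpha}$ with $\sum_\alpha {\sf B}_\alpha P_j^\alpha\equiv 0$, one produces a family that spans $T_{J^\kappa_{\sf vert},p}$: indeed, over $\{z_i'\neq 0\}$ the full rank $\kappa+1$ (again from $\Omega_{i_0}\neq 0$) exhibits $J^\kappa_{\sf vert}$ as an affine-space bundle over the space of base-jets, so the horizontal lifts project onto a basis of the base-jet directions while the kernel fields frame the $a$-fibres, and the two pieces together exhaust a tangent space of dimension $(n+1)(\kappa+1) + \big(\tfrac{(n+1+d)!}{(n+1)!\,d!}-\kappa-1\big) = j^d_{n,\kappa}$.

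It remains to render these fields \emph{global} holomorphic sections of the asserted twisted bundle after clearing $\Omega_{i_0}$. Since every ${\sf A}_\alpha$ is homogeneous of degree $1$ in $(a_\alpha)$, the coefficient-space contribution is absorbed by a single factor $\mathcal{O}_{\P^{\frac{(n+1+d)!}{(n+1)!\,d!}-1}}(1)$. For the $\P^{n+1}$-factor one passes to homogeneous coordinates and counts, along the hyperplane at infinity $\P^n_\infty$, the pole order produced jointly by the Cramer numerators (polynomials in $z, z', \dots, z^{(\kappa)}$, with $z_k^{(l)}$ of weight $l$) and by the homogenized Wronskian $\Omega_{i_0}$; multiplying by a section of $\mathcal{O}_{\P^{n+1}}(\kappa^2+2\kappa)$ exactly neutralises this pole, giving honest sections still tangent to, and still spanning, $J^\kappa_{\sf vert}$ at every point of the locus.

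The main obstacle is this final pole-order bookkeeping: one must verify that the interplay between the weights $l$ of the jet variables $z_k^{(l)}$ and the orders of the total-derivative operators ${\sf D}^j$ leaves \emph{no} pole of order exceeding $\kappa^2+2\kappa$ along $\P^n_\infty$, and simultaneously that the constructions attached to the various charts $\{z_{i_0}'\neq 0\}$ patch into global sections of one fixed twisted bundle; by contrast, the uniform full rank $\kappa+1$ of $\big(P_j^\alpha\big)$ and the dimension count are furnished cleanly and once and for all by the nonvanishing of $\Omega_{i_0}$.
\qed
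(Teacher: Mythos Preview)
The paper does not prove this theorem: it is quoted from the cited references \cite{Siu-2004, Merker-2009, Mourougane-2012, Siu-2012}, so there is no in-paper argument to compare against. Your sketch is in the right spirit\,\,---\,\,exploit the linearity of the defining equations in the $a_\alpha$ and solve the tangency conditions for a handful of ${\sf A}_\alpha$'s once an arbitrary horizontal part is prescribed\,\,---\,\,and this is indeed the skeleton of the constructions in those references.

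Where your route diverges from the literature is the device you use to invert. You pick the $(\kappa+1)\times(\kappa+1)$ Wronskian block attached to the pure powers $1, z_{i_0}, \dots, z_{i_0}^\kappa$ and apply Cramer's rule, then propose to clear $\Omega_{i_0} = {\sf const}\cdot (z_{i_0}')^{\binom{\kappa+1}{2}}$. Two issues arise. First, $\Omega_{i_0}$ is a polynomial in the \emph{jet} variable $z_{i_0}'$, not in the base variable $z_{i_0}$; multiplying by it does not by itself produce a twist by $\mathcal{O}_{\P^{n+1}}(\cdot)$, and your passage from ``clear $\Omega_{i_0}$'' to ``absorb by $\mathcal{O}_{\P^{n+1}}(\kappa^2+2\kappa)$'' conflates two different pole loci. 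Second, and more seriously, the Cramer numerators you obtain contain the right-hand sides $\sum_\alpha a_\alpha\,{\sf T}_{\sf hor}(P_j^\alpha)$, whose $z$-degree can be as large as $d-1$; multiplied against the remaining $\kappa$ columns this gives a $z$-degree that is not visibly bounded by $\kappa^2+2\kappa$ independently of $d$. The references avoid this by \emph{not} inverting a block at all: they write down explicit first-order corrections such as $\partial_{a_\alpha} - \sum_j z_j\,\partial_{a_{\alpha+\varepsilon_j}}$ (and higher analogues built from iterated shifts), together with explicit lifts of $\partial_{z_i}$ and $\partial_{z_k^{(l)}}$, each of which is tangent by a one-line polynomial identity and has $z$-degree controlled \emph{a priori}; the bound $\kappa^2+2\kappa$ then drops out of a direct degree count on those concrete formulas. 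Your honest closing paragraph already names the pole-order bookkeeping as the unfinished step; that is exactly where the Cramer shortcut would need to be replaced by, or reconciled with, the explicit low-degree fields of \cite{Merker-2009, Siu-2012}.
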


According to Siu (\cite{ Siu-2004, Siu-2012}), these fields can be used 
to show that, for $X$ generic,
entire curves  $f \colon \C \to X$ land in the base locus of {\em all} 
global algebraic jet differentials belonging to the space:
\begin{equation}
\label{h-0}
H^0\big(X,E_{n,m}^{\rm GG}T^*_X\otimes 
K_X^{-\delta m}\big)\neq 0,
\end{equation}
which is shown in~\cite{ Diverio-Merker-Rousseau-2010} 
to be nonzero for small enough $\delta \in \Q_{>
0}$, for $\kappa = n$, for $m \gg 1$, provided:
\[
d
=
{\sf deg}\,X
\,\geqslant\,
2^{n^5}.
\]
More precisely, by an abstract argument,
extend locally any such jet differential:
\[
{\sf P}\big(z,a\big)
=
\sum_{\vert\gamma_1\vert+\cdots+n\vert\gamma_n\vert=m}\,
{\sf p}_\gamma\big(z,a\big)\,
\big(z'\big)^{\gamma_1}\cdots(z^{(n)}\big)^{\gamma_n},
\]
for $a$ generic.
Use the vector fields of Theorem~\ref{slanted-vf} 
to eliminate $(z')^{\gamma_1} \dots
(z^{(n)})^{\gamma_n}$, and get the:

\begin{proposition}
\label{Y-locus}
Nonconstant entire curves algebraically degenerate inside:
\[
Y
:=
\big\{
z\in X\colon
\underbrace{
{\sf p}_\gamma\big(z,a\big)
=
0,
\ \
\forall\,
\vert\gamma_1\vert+\cdots+n\,\vert\gamma_n\vert
=
m}_{
\text{{\sf all coefficients, very numerous}}} 
\big\}.
\]
\end{proposition}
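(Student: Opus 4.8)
The plan is to combine the \emph{fundamental vanishing theorem} for jet differentials valued in an anti-ample bundle with the \emph{slanted vector fields} of Theorem~\ref{slanted-vf}, whose decisive feature is their controlled pole order $\kappa^2 + 2\kappa$. First I would invoke the fundamental vanishing theorem (\cite{ Green-Griffiths-1980, Demailly-1997, Siu-2004}): by \eqref{h-0} the section ${\sf P}$ is a global holomorphic jet differential in $H^0\big(X, E_{n,m}^{\rm GG}T_X^* \otimes K_X^{-\delta m}\big)$, and since $K_X \cong \mathcal{O}_X(d - n - 2)$ is ample (because $d \geqslant n+3$), the twist $\mathcal{A} := K_X^{\delta m}$ is ample; hence every nonconstant entire curve $f \colon \C \to X$ satisfies ${\sf P}\big(j^n f\big) \equiv 0$, that is, the $n$-jet $j^n f(\C)$ lands in the zero locus of ${\sf P}$ inside the fiber of $J_{\rm vert}^n$ over the fixed parameter $a$. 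For $a$ generic, I would extend ${\sf P}$ to a neighborhood in the universal family and write it as the polynomial ${\sf P}(z, a) = \sum_\gamma {\sf p}_\gamma(z, a)\,(z')^{\gamma_1} \cdots (z^{(n)})^{\gamma_n}$ displayed just before the statement, with coefficients ${\sf p}_\gamma$ holomorphic near $(z, a)$.

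The heart of the argument is to \emph{eliminate} the jet monomials. I would apply the vector fields ${\sf T}_1, \dots, {\sf T}_{j_{n,\kappa}^d}$ of Theorem~\ref{slanted-vf}, which are tangent to $J_{\rm vert}^n$ and span its tangent space at every point off $\{z_i' = 0\}$. Each Lie derivative ${\sf T}_i {\sf P}$ is again a relative jet differential, now valued in the bundle twisted additionally by $\mathcal{O}_{\P^{n+1}}(\kappa^2 + 2\kappa)$ along $X$ and by $\mathcal{O}(1)$ along the coefficient space. Since $f$ is nonconstant, $j^n f(\zeta)$ avoids $\{z_i' = 0\}$ off a discrete set of $\zeta$, so the spanning property lets iterated derivatives ${\sf T}_{i_1} \cdots {\sf T}_{i_r} {\sf P}$ lower the degree in the jet variables and ultimately isolate each coefficient ${\sf p}_\gamma$. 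Provided every such iterate still takes values in a \emph{negative} line bundle, the fundamental vanishing theorem applies to it as well and forces it to vanish along $j^n f$; the elimination then yields ${\sf p}_\gamma\big(f(\zeta), a\big) = 0$, first for $\zeta$ off a discrete set and then everywhere by analyticity.

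The main obstacle is precisely this twist bookkeeping. Killing a monomial of weighted degree $m$ requires of order $m$ successive derivatives, each costing $\kappa^2 + 2\kappa = n^2 + 2n$ units of pole order along $\P^{n+1}$, whereas the negativity at our disposal is the ampleness of $K_X^{\delta m} \cong \mathcal{O}_X\big(\delta m (d - n - 2)\big)$. Both sides scale linearly in $m$, so the constraint collapses to $\delta (d - n - 2) \gtrsim n^2 + 2n$; since $d \geqslant 2^{n^5}$, this is comfortably arranged by fixing a small $\delta \in \Q_{>0}$. This is exactly where Siu's \emph{low} pole order\,---\,rather than a naive, far larger, bound\,---\,is indispensable, and where the enormous degree $d$ of $X$ is spent. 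Once the budget is secured, the elimination runs to completion for all the (very numerous) multi-indices $\gamma$ with $\vert\gamma_1\vert + \cdots + n\,\vert\gamma_n\vert = m$, giving $f(\C) \subset Y$, as claimed.
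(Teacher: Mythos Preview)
Your proof is correct and follows exactly the approach the paper sketches: extend ${\sf P}$ locally in the parameter $a$, invoke the fundamental vanishing theorem, and then iteratively apply the slanted vector fields of Theorem~\ref{slanted-vf} to strip off the jet monomials, checking at each step that the accumulated pole order $m(n^2+2n)$ stays dominated by the negativity $\delta m(d-n-2)$ coming from $K_X^{-\delta m}$. The paper compresses all of this into the single phrase ``use the vector fields of Theorem~\ref{slanted-vf} to eliminate $(z')^{\gamma_1}\cdots(z^{(n)})^{\gamma_n}$''; you have supplied the omitted details, including the crucial bookkeeping that explains \emph{why} the huge degree bound $d\geqslant 2^{n^5}$ is spent precisely here. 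One minor wording point: $\delta$ is not something you are free to choose small at this stage\,---\,it is already fixed (small) by the nonvanishing~\eqref{h-0}\,---\,and it is the enormity of $d$ that then makes $\delta(d-n-2)>n^2+2n$ hold.
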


Here, the total number of algebraic equations
$p_\gamma (z, a) = 0$ is exponentially large
$\approx m^n \gg (2^{n^5})^n$.
Naturally, the common zero-set should conjecturally
be empty, whence Kobayashi's conjecture\,\,---\,\,not in 
optimal degree\,\,---\,\,seems
to be almost established.
However, all {\em intrinsic} techniques which 
provide global holomorphic sections like~\thetag{ \ref{h-0}} above,
namely either a 
decomposition of jet bundles in Schur bundles, or
asymptotic Morse inequalities, or else
probabilistic curvature estimates, 
are up to now unable to provide a partial explicit expression of
even a single algebraic coefficient ${\sf p}_\gamma ( z, a \big)$.

\smallskip

{\em This is why a refoundation towards rational effectiveness
is necessary.}

\smallskip

At least before refounding the construction of holomorphic jet
differentials, such intrinsic approaches may be pushed further to
improve the degree bound $d \geqslant 2^{ n^5}$, and to treat new
geometric situations.

Brotbek~(\cite{ Brotbek-2011}) produced holomorphic jet differentials
on general complete intersections $X^n \subset \P^{n+c}$ of
multidegrees $d_1, \dots, d_c \gg 1$.  Mourougane~(\cite{
Mourougane-2012}) showed that for general moving enough families of
high enough degree hypersurfaces in $\P^{ n+1}$, there is a proper
algebraic subset of the total space that contains the image of all
sections.

Yet getting information about `high enough' degrees represents
a substantial computational work.

The most substantial recent progress concerning degree bounds is
mainly due to Berczi (\cite{ Berczi-2010}), in the case of $f \colon
\C \to X^n \subset \P^{ n+1} ( \C)$, with $d \,\geqslant\, n^{8n}$,
instead of $d \geqslant 2^{n^5}$.  Using the above vector fields and
probabilistic curvature estimates for
Green-Griffiths jets, Demailly obtained in~\cite{ Demailly-2012}, 
still in the
case $f \colon \C \to X^n \subset \P^{ n+1} ( \C)$:
\[
d\geqslant
\frac{n^4}{3}\,
\big(n\,{\sf log}\,\big(n\,\big(\log(24\,n)\big)\big)^n.
\]

\begin{theorem}
{\rm ({\sc Darondeau}, \cite{ Darondeau-2014a})}
Suppose that the jet order $\kappa$ is 
Let $X^{ n -1} \subset \P^n ( \C)$ be a smooth complex projective
algebraic hypersurface of degree:
\[
d
\,\geqslant\,
\,(5\,n)^2\,n^n.
\]
If $X^{ n-1}$ is Zariski-generic, then there exists a proper algebraic
subvariety $Y \subsetneqq \P^n$ of codimension $\geqslant 2$ such that
every nonconstant entire holomorphic curve $f\colon \C \rightarrow
\P^n \backslash X$ actually lands in $Y$, namely $f(\C) \subset Y$.
\end{theorem}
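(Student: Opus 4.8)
The plan is to run the logarithmic analogue of the Green--Griffiths--Siu machinery for the pair $(\P^n, X)$, combining three ingredients: the existence of many global logarithmic jet differentials vanishing on an ample divisor; a logarithmic fundamental vanishing theorem guaranteeing that every entire curve $f \colon \C \to \P^n \backslash X$ satisfies all of them; and the (logarithmic analogue of the) slanted vector fields of Theorem~\ref{slanted-vf}, used to differentiate these jet differentials and to squeeze the resulting degeneracy locus down to codimension $\geqslant 2$. First I would replace $T_X^*$ by the logarithmic cotangent sheaf $\overline{T}_{\P^n}^*(\log X)$ and work with the logarithmic Green--Griffiths bundle $\mathcal{E}_{\kappa,m}^{\rm GG}\overline{T}_{\P^n}^*(\log X)$ over the base $\P^n$, with jet order $\kappa = n$ equal to the dimension of the complement, noting that the log-canonical twist $\mathcal{O}_{\P^n}(d-n-1)$ plays here the role of $K_X$ and is ample once $d \geqslant n+2$.

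The first main step is to produce nonzero sections in $H^0\big(\P^n,\, \mathcal{E}_{n,m}^{\rm GG}\overline{T}_{\P^n}^*(\log X) \otimes \mathcal{A}^{-1}\big)$ for an ample $\mathcal{A}$ and $m \gg 1$. As in the compact case of~\cite{Diverio-Merker-Rousseau-2010}, I would estimate the Euler characteristic of this bundle via its filtration into logarithmic Schur (or symmetric) bundles, and control the higher cohomology by an algebraic holomorphic Morse inequality, the positivity of the leading term forcing $h^0 > 0$ once $d = {\sf deg}\, X$ is large; optimizing this positivity threshold is precisely what should yield the explicit bound $d \geqslant (5n)^2 n^n$. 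Granting such a section $P$, the logarithmic fundamental vanishing theorem (the Schwarz-type lemma of Nevanlinna--Noguchi type for the complement) then gives $P(j_n f) \equiv 0$ for every nonconstant $f \colon \C \to \P^n \backslash X$, so that $f$ is trapped by a nonzero algebraic differential equation.

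The second main step is the elimination of the jet variables. Passing to the universal family $\mathcal{X} \subset \P^n \times \P^{N_d}$ with $N_d = \binom{n+d}{n} - 1$, and to its vertical logarithmic jet space, the twisted relative tangent bundle $T_{J^n_{\rm vert}} \otimes \mathcal{O}_{\P^n}(n^2 + 2n) \otimes \mathcal{O}_{\P^{N_d}}(1)$ is globally generated by slanted vector fields exactly as in Theorem~\ref{slanted-vf}. Extending $P$ to nearby parameters $a$ and applying these fields to differentiate its coefficients $\mathsf{p}_\gamma(z,a)$, each derivative is again a jet differential that $f$ must satisfy; iterating until every monomial $\big(z'\big)^{\gamma_1}\cdots\big(z^{(n)}\big)^{\gamma_n}$ is eliminated, I would conclude as in Proposition~\ref{Y-locus} that $f(\C)$ lands in the common zero locus $Y = \big\{z : \mathsf{p}_\gamma(z,a) = 0 \ \, \forall\, \gamma\big\}$.

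The hard part will be to upgrade $Y$ from a merely proper subvariety to one of codimension $\geqslant 2$, while keeping the threshold at $(5n)^2 n^n$. Here I would exploit the genericity of $X$ together with the two competing twists: the slanted fields are only globally generated after the positive twist $\mathcal{O}_{\P^n}(n^2 + 2n)$, whereas the jet differentials carry the negative twist $\mathcal{A}^{-1}$, and balancing these against the dimension of the vertical jet space is what prevents any codimension-one component of $Y$ from surviving for generic $a$. Concretely, a codimension-one component $Z \subsetneqq \P^n$ of the base locus would, by the global generation of the twisted tangent bundle (the slanted fields spanning all jet directions as well as deformations of $a$), have to be invariant under a large family, forcing $Z$ either to be cut out by the defining equation itself or to be empty; a transversality argument on the parameter space $\P^{N_d}$ then rules it out. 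Controlling this balance effectively, rather than merely asymptotically, is the delicate point on which the entire bound rests.
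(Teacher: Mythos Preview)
The paper does not prove this theorem; it is stated as a result of Darondeau with a citation to~\cite{Darondeau-2014a}. The only proof-adjacent content the paper supplies is the surrounding context: it introduces the auxiliary cyclic-cover variable $W$ with $W^d = \sum_\alpha A_\alpha Z^\alpha$, the vertical logarithmic jet space $J^\kappa_{\rm vert}(-\log)$, and then states the companion Theorem~\ref{slanted-complement} on global generation of the twisted tangent bundle of that log-jet space. So there is no paper-proof to match line by line; one can only compare your outline against the strategy the paper signals.

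Your outline is the right overall architecture\,---\,the logarithmic version of the Diverio--Merker--Rousseau scheme\,---\,and it is consistent with what the paper sketches. One genuine difference: the paper (and Darondeau) passes through the cyclic cover $\{W^d = R\} \subset \P^{n+1}$, which converts the pair $(\P^n, X)$ into a compact hypersurface and lets one run the ordinary jet machinery before descending; you instead work intrinsically with $\mathcal{E}_{n,m}^{\rm GG}\overline{T}_{\P^n}^*(\log X)$. Both viewpoints are legitimate, but the cyclic cover is the route the paper points to, and it is what makes the pole order $\kappa^2 + 2\kappa$ in Theorem~\ref{slanted-complement} immediately usable.

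The weak point in your proposal is the final paragraph on codimension~$\geqslant 2$. The claim that a divisorial component of $Y$ ``would have to be invariant under a large family'' and is then ``ruled out by transversality on $\P^{N_d}$'' is not an argument; nothing you have set up forces such invariance, and genericity of $a$ alone does not kill codimension-one components of a base locus. In the actual approach the drop from codimension~$1$ to codimension~$\geqslant 2$ is not a soft deformation statement but a second quantitative pass: one must show that on any hypothetical divisorial piece of $Y$ there are again enough twisted jet differentials (with the negative twist surviving the $\mathcal{O}_{\P^n}(n^2+2n)$ cost of the slanted fields) to force a further proper subvariety, and the bound $(5n)^2 n^n$ is tuned so that both passes succeed simultaneously. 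Your sketch does not contain this mechanism, so as written the codimension~$\geqslant 2$ conclusion is unsupported.
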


Consider again the {\sl universal family} of degree $d$ hypersurfaces of
$\P^n$:
\[
\mathcal{H}
:=
\Big\{
\big([Z],[A]\big)
\in
\P^n\times\P^{\binom{n+d}{d}-1}
\colon\,
\sum\,A_\alpha\,Z^\alpha
=
0
\Big\}.
\] 
With an additional variable
$W \in \C$, introduce the family of hypersurfaces of $\P^{n+1}$:
\[
W^d
=
\sum_\alpha\,A_\alpha\,Z^\alpha.
\]
The space $J_{\rm vert}^\kappa( -{\sf log})$ of {\sl vertical
logarithmic $\kappa$-jets} is associated to jets of local
holomorphic maps $f\colon \D \to \P^n \backslash \mathcal{ H}_A$
valued in the complement of a hypersurface $\mathcal{ H}_A$
corresponding to a fixed $A$ and having a certain determined
behavior near $\{ W = 0\}$.  The counterpart of
Theorem~\ref{slanted-vf} useful {\em infra} is:

\begin{theorem}
{\rm ({\sc Darondeau}, \cite{Darondeau-2014b})}
\label{slanted-complement}
With $\kappa \leqslant d$, the twisted tangent bundle to the space of
logarithmic $\kappa$-jets:
\[
T_{J_{\rm vert}^\kappa(-{\sf log})}
\otimes
\mathcal{O}_{\P^n}\big(\kappa^2+2\kappa\big)
\otimes
\mathcal{O}_{\P^{\frac{(n+d)!}{n!\,d!}}}(1)
\]
is generated by its global holomorphic sections at every point
not in $\{ W = 0 \} \cup \{ Z_i' = 0\}$.
\end{theorem}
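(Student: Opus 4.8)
The plan is to deduce this logarithmic statement from its compact counterpart, Theorem~\ref{slanted-vf}, by exploiting the cyclic covering device set up with the auxiliary variable $W$. A local holomorphic map $f \colon \D \to \P^n \backslash \mathcal{H}_A$ avoiding the hypersurface $\mathcal{H}_A = \{\sum_\alpha A_\alpha Z^\alpha = 0\}$ lifts, after a choice of $d$-th root, to a map $\widetilde{f} = (f, w)$ valued in the hypersurface $\{W^d = \sum_\alpha A_\alpha Z^\alpha\} \subset \P^{n+1}$ that avoids the branch locus $\{W = 0\}$; conversely such lifts with $w \neq 0$ descend. Hence, over $\{W \neq 0\}$, the space $J_{\rm vert}^\kappa(-{\sf log})$ of vertical logarithmic $\kappa$-jets is identified with an open stratum of the ordinary vertical jet space of this one-variable-richer family. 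First I would make this identification precise as a biholomorphism of the relevant strata, so that tangent vectors and their generation transport across it. One caveat that drives the whole argument: in this family \emph{only} the parameters $A_\alpha$ are free (the $W$-monomials are rigidly fixed at $W^d$), so one cannot invoke Theorem~\ref{slanted-vf} as a black box but must \emph{adapt} its construction to the restricted shape of the family.

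Next I would build the generating sections explicitly, following the slanted-vector-field philosophy of Siu and~\cite{Merker-2009}. The defining equations are the successive total derivatives of $W^d = \sum_\alpha A_\alpha Z^\alpha$, and a field
\[
{\sf T}
=
\sum_i {\sf Z}_i\,\frac{\partial}{\partial Z_i}
+
{\sf W}\,\frac{\partial}{\partial W}
+
\sum_\alpha {\sf A}_\alpha\,\frac{\partial}{\partial A_\alpha}
+
\sum_{k,\lambda}\,{\sf Z}_k^{(\lambda)}\,
\frac{\partial}{\partial Z_k^{(\lambda)}}
\]
is tangent to $J_{\rm vert}^\kappa(-{\sf log})$ precisely when applying it to each of the $\kappa + 1$ equations returns a combination of those equations. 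The crucial mechanism is that each parameter direction $\partial/\partial A_\alpha$ feeds the pure monomial $Z^\alpha$, together with its prolongations as given by the closed-form differentiation Lemma of~\cite{Merker-2009}, into the tangency relations; the very large supply of coefficients $A_\alpha$ then lets one solve these linear constraints with polynomial coefficients. I would track the $Z$-degrees and the pole orders along the hyperplane at infinity: the prolongation to jet order $\lambda$ forces a growth quadratic in $\lambda$, which summed to order $\kappa$ produces exactly the twist $\mathcal{O}_{\P^n}(\kappa^2 + 2\kappa)$, while linearity in the $A_\alpha$ produces the single $\mathcal{O}(1)$ on the parameter space.

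Then I would verify global generation pointwise. At a jet outside $\{W = 0\} \cup \{Z_i' = 0\}$, the non-vanishing of $W$ and of some $Z_i'$ allows inversion of the leading jet data, and a Wronskian-type non-degeneracy computation shows that the base directions $\partial/\partial Z_i$, the fiber directions $\partial/\partial Z_k^{(\lambda)}$, and the direction $\partial/\partial W$ all lie in the span of the constructed ${\sf T}$'s. This is the determinantal heart of the argument; transported through the identification of the first paragraph, it yields generation of $T_{J_{\rm vert}^\kappa(-{\sf log})}$ at the required points.

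The main obstacle, I expect, is the logarithmic bookkeeping along $\{W = 0\}$. One must check not merely that the fields are tangent to the naive jet equations, but that they respect the prescribed logarithmic behaviour of the jets near the branch divisor, so that they are genuine sections of the logarithmic jet bundle rather than of its compact analogue; equivalently, one must control the poles that the covering introduces as $W \to 0$ and confirm they are absorbed by the stated twist. A secondary difficulty is achieving the \emph{sharp} order $\kappa^2 + 2\kappa$ instead of a wasteful bound: this forces a non-greedy choice of the ${\sf T}$'s that exploits cancellations among the prolonged monomials, and it is here that the explicit closed-form differentiation Lemma is indispensable.
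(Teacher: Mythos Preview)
The paper does not contain a proof of this theorem: it is stated with attribution to {\sc Darondeau}~\cite{Darondeau-2014b} and then immediately used, in the style of a survey. There is therefore no ``paper's own proof'' against which to compare your proposal.

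That said, your outline is broadly consonant with the cyclic-covering strategy the paper sets up in the paragraph preceding the statement (introducing the auxiliary variable $W$ and the family $W^d = \sum_\alpha A_\alpha Z^\alpha$), and with the compact-case mechanism of Theorem~\ref{slanted-vf}. Your identification of the two main difficulties---that the $W$-coefficients are \emph{not} free parameters, so Theorem~\ref{slanted-vf} cannot be invoked verbatim, and that the logarithmic pole structure along $\{W=0\}$ must be respected---is accurate and is precisely what distinguishes Darondeau's contribution from the earlier work of Siu, Merker, and Mourougane cited alongside Theorem~\ref{slanted-vf}. If you wish to complete the argument you will need to consult~\cite{Darondeau-2014b} directly; the present paper offers no further details.
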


\subsection{Prescribing the Base locus of Siu-Yeung jet differentials}

In~\cite{ Siu-Yeung-1996, Merker-Siu-Yeung-2014}, it is shown that
a surface $X^2 \subset \P^3$ having affine equation: $z^d = R(x,y)$,
where $R \in \C[ x, y]$ is a generic 
polynomial of high enough degree $d \gg 1$, 
the following holds. For every collection of
polynomials $A_{j,k,p,q} \in \C[ x, y]$ 
having degrees $\deg\, A_{j,k,p,q} \leqslant d - 3\, m - 1$, 
the meromorphic jet differential:
\[
\footnotesize
\aligned
\frac{{\sf J}(x,y,x',y',x'',y'')}{
R_y\cdot z^{m(d-1)}}
\,=\,
\frac{1}{
R_y\cdot z^{m(d-1)}}
\sum_{j+k+p+3q=m}\,
A_{j,k,p,q}\,
(x')^j\,(y')^k\,
(R')^p\,
\left\vert\!
\begin{array}{cc}
x' & R'
\\
x'' & R''
\end{array}
\!\right\vert^q\,
(R)^{m-p-q},
\endaligned
\]
where:
\[
R'
:=
R_x\,x'
+
R_y\,y',
\ \ \ \ \ \ \
R''
:=
R_x\,x''
+
R_y\,y''
+
R_{xx}\,(x')^2
+
2\,R_{xy}\,x'\,y'
+
R_{yy}\,(y')^2,
\]
possesses a restriction to $X^2$
which is a {\em holomorphic} section of the bundle 
of the Green-Griffiths jet bundle $\mathcal{ E}_{2,m}^{\rm GG}T_X^*$,
provided only that the polynomial numerator:
\[
{\sf J}(x,y,x',y',x'',y'')
\,\equiv\,
R_y(x,y)\,
\widetilde{\sf J}(x,y,x',y',x'',y'')
\]
is divisible by $R_y$, which happens to be satisfiable 
for $m = 81$ and $d = 729$, and more generally whenever $m \geqslant 81$
and $d \geqslant 3\, m$.

An expansion yields: 
\[
{\sf J}
=
\sum_{\alpha+\beta+3\gamma=m}\,
\Lambda_{\alpha,\beta,\gamma}
\big(
A_{{\scriptscriptstyle{\bullet}}},
J_{x,y}^2R
\big)\,
(x')^\alpha\,(y')^\beta\,
\left\vert\!
\begin{array}{cc}
x' & y'
\\
x'' & y''
\end{array}
\!\right\vert^\gamma,
\]
in terms of some $\Lambda_{\scriptscriptstyle{\bullet}}$ that are linear in the 
$A_{\scriptscriptstyle{\bullet}}$ and polynomial in the $2$-jet $J_{x, y}^2 R$.

Since the vector fields of Theorem~\ref{slanted-complement} have a
maximal pole order $8$ here, lowering $\deg\, A_{ j, k, p,
q} \leqslant d - 11\, m - 1$ enables to conclude, as in
Proposition~\ref{Y-locus}, that for a generic curve $\{ R = 0\}
\subset \P^2$, nonconstant entire holomorphic maps $f \colon \C \to
\P^2 \backslash \{ R = 0\}$ land inside the common zero set of all the
$\Lambda_{\scriptscriptstyle{\bullet}}$, for $d \geqslant 2\,916$.

\medskip\noindent
{\bf Open Problem.}
{\sl Control or prescribe the base locus of coordinate jet differentials.}

\medskip

A conjecturally accessible strategy is as follows, of course
extendable to arbitrary dimensions.  For convenience,
replace $m \mapsto 3\, m$.  Decompose ${\sf J} = {\sf J}^{\sf top} +
{\sf J}_{\sf sub}^{\sf cor}$, where:
\[
\footnotesize
\aligned
{\sf J}^{\sf top}
&
:=
1\cdot
\left\vert\!
\begin{array}{cc}
x' & R'
\\
x'' & R''
\end{array}
\!\right\vert^m\,
(R)^{2m}
=
\left(
{\textstyle{
\left\vert\!
\begin{array}{cc}
x' & y'
\\
x'' & y''
\end{array}
\!\right\vert^m}}\,
R_y
+
(x')^3\,R_{xx}
+
2\,(x')^2y'\,R_{xy}
+
x'(y')^2\,R_{yy}
\right)^m\,
(R)^{2m},
\\
{\sf J}_{\sf sub}^{\sf cor}
&
:=
\sum_{j+k+p+3q=3m
\atop
q\leqslant m-1}\,
A_{j,k,p,q}(x,y)\,
(x')^{j}\,(y')^{k}\,
(R')^p\,
\left\vert\!
\begin{array}{cc}
x' & R'
\\
x'' & R''
\end{array}
\!\right\vert^q\,
(R)^{m-p-q}.
\endaligned
\]
Since $\big( (x'y''-y'x'')\,R_y \big)^m$ in 
${\sf J}^{\sf top}$ is divisible by $R_y$, 
Proposition~\ref{Y-locus} would show that entire curves land
in $\{ (R_y)^{m-1} = 0\}$, and exchanging $x \leftrightarrow y$,
in $\{ (R_x)^{m-1} = 0\}$, hence are constant because
$\emptyset = \{ 0 = R = R_x = R_y\}$ by smothness of $\{ R = 0\}$.

However, {\em all} $\Lambda_{ \alpha, \beta, \gamma}$, not just
$\Lambda_{ 0, 0, m}$, should be divisible by $R_y$ in order that the
restriction to the projectivization of $\{ z^d = R(x, y)\}$ of ${\sf
J} \big/ \big( R_y\, z^{3m(d-1)} \big)$ be a holomorphic jet
differential, because modulo $R_y$:
\[
{\sf J}^{\sf top}
\equiv
\Big(
(x')^3\,R_{xx}
+
2\,(x')^2y'\,R_{xy}
+
x'(y')^2\,R_{yy}
\Big)^m\,
(R)^{2m}
\]
is nonzero. The strategy is to use ${\sf J}_{\sf sub}^{\sf cor}$
in order to {\em correct} this remainder. Conjecturally,
the linear map which, to the $A_{{\scriptscriptstyle{\bullet}}}$ of
${\sf J}_{\sf sub}^{\sf cor}$, associates the 
coefficients of a basis $x^h y^i$ of $\C[ x, y] / \langle R_y \rangle$
in all the monomials $(x')^\alpha (y')^\beta$ with $\alpha + \beta = 3m$
is submersive, also in arbitrary dimension, which would hence
terminate.

%%%%%%%%%%%%%%%%%%%%%%%%%%%%%%%%%%%%%%%%%%%%%%%%%%%%%%%%%%%%%%%%%%%%%%%%%%%%

\vfill\end{document}